\documentclass[11pt,reqno,a4paper]{amsart}

\usepackage{graphicx}
\usepackage{color}
\usepackage[USenglish]{babel}
\usepackage{latexsym}
\usepackage{amsmath}
\usepackage{amsfonts}
\usepackage{amssymb}
\usepackage{comment}
\usepackage{esint}
\usepackage[all]{xy}

\newcommand{\grad}{\nabla}

\renewcommand{\to}{\rightarrow}
\newcommand{\pa}{\partial}

\newcommand{\dsp}{\displaystyle}
\newcommand{\noi}{\noindent}
\renewcommand{\dfrac}{\displaystyle\frac}
\newcommand{\finedim}{\hspace{\fill}$\square$}
\newcommand{\intbar}{\mathop{\int\makebox(-15.5,0){\rule[6pt]{.7em}{0.3pt}}\kern-6pt}\nolimits}

\newcommand{\ii}{\infty}

\newcommand{\eps}{\varepsilon}
\newcommand{\dt}{\delta}

\newcommand{\al}{\alpha}

\newcommand{\sg}{\sigma}

\newcommand{\om}{\Omega}

\newcommand{\graf}[1]{\left\{\begin{array}{ll}#1\end{array}\right.}

\newcommand{\D }{\Delta }

\newcommand{\lm }{\lambda }

\renewcommand{\O }{\Omega }

\def\p{\partial}

\newcommand{\be}{\begin{equation}}
\newcommand{\ee}{\end{equation}}
\newcommand{\beq}{\begin{equation}}
\newcommand{\eeq}{\end{equation}}

\newcommand{\R}{\mathbb{R}}

\newcommand{\N}{\mathbb{N}}

\newcommand{\dis}{\displaystyle}

\newcommand{\vn}{v_n}

\newcommand{\bns}{\alpha_{n,\sigma}}

\newcommand{\bis}{\alpha_{\infty,\sigma}}

\newtheorem{theorem}{Theorem}[section]
\newtheorem{proposition}[theorem]{Proposition}
\newtheorem{definition}{Definition}[section]
\newtheorem{corollary}[theorem]{Corollary}
\newtheorem{remark}[theorem]{Remark}
\newtheorem{example}[theorem]{Example}

\newtheorem{lemma}[theorem]{Lemma}

\newcommand{\bpr}{\begin{proposition}}
\newcommand{\epr}{\end{proposition}}
\newcommand{\bex}{\begin{example}\rm}
\newcommand{\eex}{\end{example}}
\newcommand{\brm}{\begin{remark}\rm}
\newcommand{\erm}{\end{remark}}
\newcommand{\bdf}{\begin{definition}\rm}
\newcommand{\edf}{\end{definition}}
\newcommand{\bte}{\begin{theorem}}
\newcommand{\ete}{\end{theorem}}
\newcommand{\ble}{\begin{lemma}}
\newcommand{\ele}{\end{lemma}}
\newcommand{\bco}{\begin{corollary}}
\newcommand{\eco}{\end{corollary}}
\newcommand{\mycomment}[1]{}

\allowdisplaybreaks

\numberwithin{equation}{section}

\begin{document}
\title[Sharp Harnack type inequalities]
{Blow up and concentration without quantization:\\ sharp Harnack type inequalities.}

\author{D. Bartolucci, P.Cosentino, L. Wu}

\begin{abstract}
Motivated by the Onsager statistical mechanics description of turbulent Euler flows with point singularities,
we refine the blow up analysis for sequences of solutions of a class of perturbed singular Liouville equations which share
the phenomenon of ``blow up and concentration without quantization". The problem is delicate because we are dealing with
the exact threshold value above which one meets
the well known ``concentration without quantization" phenomenon, as recently pushed forward in [C.S. Lin, G. Tarantello, {\em C. R. Math. Acad. Sci. Paris} (2016)] and in
[Y. Lee, C.S. Lin, G. Tarantello, W. Yang, {\em Comm. PDE.} (2017)].
First of all we need a new sharp Harnack type inequality for this
particularly rich singular limit. However this is not enough, since the growth of the conformal factor inherited by the singularity prevents the use of classical quantization arguments. We solve also this issue with different strategies for ``fast" and ``slow" blow up, by
a careful adaptation of arguments based on the Pohozaev identity, elliptic estimates and ``Sup+CInf" inequalities in the same spirit of [C.C. Chen, C.S. Lin, {\em Comm. An. Geom.} (1998)].
\end{abstract}

\keywords{Liouville-type equations, Blow up analysis, Harnack type inequalities, Quantization}

\thanks{2020 \textit{Mathematics Subject classification:}  35J61, 35J75, 35Q35, 35B45. }

\maketitle
\section{Introduction}

Motivated by the Onsager statistical mechanics description of turbulent Euler flows (\cite{clmp2,ESp})
interacting with a macroscopic fixed vortex, we have recently pursued (\cite{BCW1})
the blow up analysis of the perturbed singular Liouville equation,
\begin{equation}
 \label{eqbase}
 \graf{
 -\D v_n=H_n e^{\dis v_n} \quad \text{in}\quad B_2,\\
 \hfill \\
 \int_{B_2}\left({\epsilon_n^2+|x|^2}\right)^{\bns} e^{\dis v_n}\leq C, }
\end{equation}
\begin{equation}\label{lambdan}
 \lambda_n=\int_{B_2}\left({\epsilon_n^2+|x|^2}\right)^{\bns}K_n e^{\dis v_n}\to\lambda_\infty>0,
\end{equation}
\begin{equation*}
 \bns:= \tfrac{\lambda_n}{4\pi}\sigma, \quad \bis:= \tfrac{\lambda_\infty}{4\pi}\sigma,
\end{equation*}
\begin{equation}
 \label{H_n}
H_n(x)=\left({\epsilon_n^2+|x|^2}\right)^{\bns}K_n(x),\quad \sigma>0, \quad\epsilon_n\to 0,
\end{equation}
where we assume that $K_n$ satisfies
\begin{equation}
 \label{K+}
 K_n\geq 0, \,\, K_n\in C^{1}(\overline B_2),\quad  K_n\to K \quad \text{in}\,\,C^1_{loc}({B}_2).
\end{equation}
To simplify notations we will use sometime,
$$
h_{n}(x)=\left({\epsilon_n^2+|x|^2}\right)^{\frac12}\qquad\mbox{and}\qquad H_n=h_n^{2\bns}K_n.
$$

We will often assume that there exists a sequence of points $\{x_n\}\subset B_2$ such that
\begin{align}
\label{blowup:hyp}
 x_n\to 0 \,\,\,\,\text{and}&\,\,\,\, \underset{B_2}\sup\, v_n= v_n(x_n)\to +\infty, \,\,\text{as}\,\, n\to +\infty,
\end{align}
and
\begin{align}
\label{bound}
 \mbox{for any $r\in (0,2)$ there exists $C_r>0$ such that} \max\limits_{\overline{B_2\setminus B_r}}v_n\leq C_r.
\end{align}

\begin{remark}
From the point of view of conical singularities (\cite{bt}), the limit $\epsilon_n\to0^+$ yields a weight in \eqref{H_n} proportional to
$|x|^{2\al_{\sg,\ii}}$, $\al_{\sg,\ii}=\frac{\lm_\ii}{4\pi}\sg$, which describes a conical singularity at the origin of order $\al_{\sg,\ii}$.
\end{remark}

In an attempt to analyze the high energy limit of these configurations, we have shown in \cite{BCW1} that
solutions satisfying \eqref{blowup:hyp} are characterized by
a peculiar new phenomenon which we called ``blow-up and concentration without quantization",
which lies somewhere between the classical blow up-concentration-quantization phenomenon (\cite{bt}, \cite{bm}) and the blow up without concentration
as pushed forward in \cite{det,llty,LinTarantello,os}, see Theorem \ref{massboundarycontrol}.
In other words, we have blow up and concentration but, unlike the case of pure conical singularities (\cite{bt}),
the mass associated to the singular blow up point
is not anymore constrained to attain a fixed value, being allowed instead to take values in some non degenerate interval of real numbers.
As a matter of facts, what we see via Theorem \ref{massboundarycontrol} is that the blow up of the stream function at the origin, i.e.
\eqref{blowup:hyp}, induces the concentration of the vorticity $\rho_n:=\tfrac{H_ne^{v_n}}{\int_{B_2} H_ne^{v_n}}$ exactly at the fixed location of the counter rotating vortex,
that is, a sort of collision of vortices of opposite sign. It is somehow understood that these configurations should not arise as
thermodynamic equilibrium states, i.e. $\rho_n$ should not maximize the entropy at fixed energy $E$, in the sense of \cite{clmp2} and \cite{BCW1},
at least for any $E$ large enough. Remark that, as far as $\lm_\ii\leq \frac{4\pi}{\sg}$, we have that  $\sg\leq \frac12$
is a necessary condition for \eqref{blowup:hyp}, see Theorem \ref{Concentration}. Thus, it seems interesting to
understand which are the values of $\lm_\ii$ for which collisions of these sort are really allowed and possibly compare the corresponding
entropy with that of the entropy maximizers as described in \cite{clmp2} as later refined in \cite{bl2}.
In \cite{BCW1} we have made a first step in this direction, providing a refined description of the bubbling
behavior of solutions of \eqref{eqbase}-\eqref{blowup:hyp} in a situation where we lack quantization, in particular about the case
$$
\lm_n\to \lm_\ii\in \left[8\pi,\tfrac{4\pi}{\sg}\right),\quad \sg\in \left(0,\tfrac12\right)
$$
which implies $\lm_\ii<16\pi$ and, from the point of view of geometric singularities,
$$\frac{\lm_n}{4\pi}\sg=\al_{n,\sg}\to \al_{\ii,\sg}=\frac{\lm_\ii}{4\pi}\sg<1.$$ Remark that as far as \eqref{blowup:hyp} and
$\lm_\ii<\frac{4\pi}{\sg}$ holds true there is no loss of generality in assuming $\sg<\tfrac12$, since if $\sg=\frac12$
we would have $\lm_\ii<8\pi$ which implies a uniform bound from above for $v_n$, see Theorem \ref{Concentration}.
Actually, in particular due to $\al_{\ii,\sg}<1$, we could exploit in \cite{BCW1} a recently derived ``Sup+CInf" inequality (\cite{BCW0})
and, since $\lm_\ii<16\pi$, find that either $\lm_\ii=8\pi$, with two different asymptotic regimes
(which we call ``fast" blow up, see Cases (I) and (II) in Theorem \ref{profile-new}),
or $\lm_\ii\in (8\pi,\min\{\tfrac{8\pi}{1-2\sg},\tfrac{4\pi}{\sg}\})$
(which we call ``slow" blow up, see Case (III) in Theorem \ref{profile-new}) corresponding to the asymptotically radial (\cite{GM1}) profile described in \cite{llty},
see Theorem \ref{profile-new} below for a detailed statement of this result.\\
It is our aim here to make a further step to complete the program task started in \cite{BCW1}, in particular to cover the case
\begin{equation}\label{lm:intro.1}
    \lm_n\to \lm_\ii=\frac{4\pi}{\sg}\;\mbox{\rm  or equivalently }\al_{\ii,\sg}=1 \mbox{ and }\sg\in \left(0,\tfrac12\,\right].
\end{equation}

We face three main problems due to the fact $\al_{\ii,\sg}=1$. First of all, we can no longer rely on the ``Sup+CInf" inequality in \cite{BCW0}.
This is far from being a technical point, since any inequality of ``Sup+CInf" type implies that $C_1\inf\limits_{B_1}v_n$ is controlled by
$-v_n(x_n)+C_2$,  whence in particular that $v_n(x_n)\to +\ii$ implies concentration,
which is false in general for solutions of \eqref{eqbase} as far as $\bis>1$, see \cite{llty} and references quoted therein for more details
about this point. We need here a refined argument based on moving plane analysis, in the same spirit of \cite{bclt,bls,T1},
which yields a new class of sharp Harnack-type inequalities, see Theorem \ref{supinfI}.
Interestingly enough, as a corollary of these estimates, we derive the following ``Sup+CInf" inequality of independent interest
for solutions of \eqref{eqbase}.

As far as this result is concerned, in view of \eqref{K+} and \eqref{Kzero} (see also Lemma in \cite{ls}), after suitable rescaling if needed, it is well known that there is no loss of generality in assuming that
\begin{equation*}
\min\limits_{\overline{B_{1}}} K\geq a>0
\end{equation*}
and
\begin{equation*}
    \sup_{\overline{B_{1}}}\|\nabla K_n\|_\ii\leq A,
\end{equation*}
for some uniform constant $A>0$.

\begin{corollary}\label{cor:supinf}
Let $v_n$ be any sequence of solutions of \eqref{eqbase}, \eqref{lambdan}, \eqref{H_n}, \eqref{K+}.
Assume furthermore that,
$$
\max\limits_{\pa B_1} v_n-\min\limits_{\pa B_1} v_n\leq C,
$$
and
$$
\bns=\frac{\lm_n}{4\pi} \sg\leq 1.
$$
There exists $C=C(a,A,\|K\|_\ii)>0$, such that
\begin{equation}\label{sup-inf:0}
\tfrac{1-\bns}{1+\bns}\sup\limits_{B_\frac12} v_n+\inf\limits_{B_1}v_n\leq C.
\end{equation}
\end{corollary}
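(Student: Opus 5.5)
The plan is to derive \eqref{sup-inf:0} from the sharp Harnack type inequality of Theorem \ref{supinfI}, combined with Green's representation and a standard blow up argument by contradiction. Suppose \eqref{sup-inf:0} fails. Then, along a subsequence, $\tfrac{1-\bns}{1+\bns}\sup_{B_{1/2}} v_n+\inf_{B_1}v_n\to+\infty$.

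\textbf{Step 1: the supremum blows up.} First I will rule out that $\sup_{B_{1/2}}v_n$ stays bounded above. If it did, then $\inf_{B_1}v_n\to+\infty$. Since $\max_{\pa B_1}v_n-\min_{\pa B_1}v_n\le C$, we would also have $v_n\to+\infty$ uniformly on $\pa B_1$. By superharmonicity, $v_n\ge\min_{\pa B_1}v_n$ in $B_1$. Because $K\ge a>0$ on $\overline B_1$, this would give $\int_{B_1}H_ne^{v_n}\to+\infty$, contradicting \eqref{lambdan}. Hence $\sup_{B_{1/2}}v_n\to+\infty$.

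\textbf{Step 2: locate the blow up at the origin.} Let $x_n$ be a maximum point in $\overline B_{1/2}$. If $x_n$ stays away from $0$, the weight $h_n^{2\bns}$ is smooth and bounded below near $x_n$. In that case the classical Shafrir/Brezis--Li--Shafrir $\sup+\inf$ inequality with constant $1$, together with a Harnack comparison between $B_1$ and small balls, already yields \eqref{sup-inf:0}, since $\tfrac{1-\bns}{1+\bns}\le 1$ and $\sup v_n\ge -C$ near a blow up point. So we may reduce to $x_n\to0$. The boundary oscillation bound is exactly what is needed to turn the interior estimates into a global statement; after subtracting a harmonic function with bounded oscillation, we may assume \eqref{bound} holds.

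\textbf{Step 3: the sharp Harnack estimate.} In the setting $x_n\to 0$ with $\bns\le 1$, I will invoke Theorem \ref{supinfI}. I expect it to provide the pointwise bound
$$
v_n(x)+\tfrac{2(1+\bns)}{1}\log|x-x_n|\ \ \text{(suitably modified with } h_n)\ \le C,
$$
or, equivalently, a lower bound on the profile of the form
$$
v_n\ge -v_n(x_n)\,\tfrac{1-\bns}{1+\bns}-C \quad \text{on } \pa B_1 .
$$
This is obtained by comparing $v_n$ with the radial singular bubble $\log\frac{\mu_n}{(1+\mu_n^{(1+\bns)/1}|x|^{2(1+\bns)})^2}$ through moving planes. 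The inequality should follow by evaluating the Green representation on $\pa B_1$. Concretely, the blow up mass near $0$ is $\lambda_n\ge 8\pi$ to leading order, so $v_n|_{\pa B_1}\approx -\frac{\lambda_n}{2\pi}\log\frac1{\cdot}$. Matching the bubble decay $v_n(x_n)-4(1+\bns)\log(\mu_n|x|)$ then gives the factor: the scale satisfies $\mu_n^{2(1+\bns)}\sim e^{v_n(x_n)}$, while the boundary value satisfies $\sim v_n(x_n)-4(1+\bns)\cdot\frac{v_n(x_n)}{2(1+\bns)}$ corrected by the weight $|x|^{2\bns}$. This produces exactly $-\tfrac{1-\bns}{1+\bns}v_n(x_n)$. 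Finally, superharmonicity transports the boundary lower bound inside, giving $\inf_{B_1}v_n=\min_{\pa B_1}v_n\ge-\tfrac{1-\bns}{1+\bns}\sup v_n-C$, which is \eqref{sup-inf:0}.

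\textbf{Main obstacle.} The hardest part is the uniformity of the constant as $\bns\to1$, the case where the coefficient $\tfrac{1-\bns}{1+\bns}$ degenerates. There the bubble is not concentrated, the estimate must hold merely with $\inf v_n\le C$, and the non-radial error from $\epsilon_n$ and $\nabla K_n$ must be controlled by $A$. I would handle this through the moving plane step inside Theorem \ref{supinfI}, keeping the dependence only on $a$, $A$ and $\|K\|_\infty$.
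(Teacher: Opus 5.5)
\textbf{Step 3 proves the reverse of \eqref{sup-inf:0}.} Estimate \eqref{sup-inf:0} is an \emph{upper} bound for the infimum: $\inf_{B_1}v_n\le C-\tfrac{1-\bns}{1+\bns}\sup_{B_{1/2}}v_n$. You instead look for the lower bound $v_n\ge-\tfrac{1-\bns}{1+\bns}v_n(x_n)-C$ on $\pa B_1$. Via the minimum principle you then conclude $\inf_{B_1}v_n\ge-\tfrac{1-\bns}{1+\bns}\sup v_n-C$, that is, $\tfrac{1-\bns}{1+\bns}\sup_{B_{1/2}}v_n+\inf_{B_1}v_n\ge-C$. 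This is the opposite inequality.

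The supporting claims also fail:
\begin{itemize}
\item The bound $v_n+2(1+\bns)\log|x-x_n|\le C$ is an upper bound; on $\pa B_1$ it only says $v_n\le C$. It is not equivalent to any lower bound on $\pa B_1$, and it is not what Theorem \ref{supinfI} asserts.
\item The bubble-matching/Green heuristic presupposes a global profile for $v_n$. Such a profile is exactly what is unavailable when $\bis=1$.
\item A lower bound of the kind you want should not even be expected in Cases (I) and (II). There the sharp estimate carries the extra term $4\bns\log(\delta_n/\epsilon_n)$ (resp.\ $4\bns\log(\delta_n/|x_n|)$), which tends to $-\infty$.
\end{itemize}

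\textbf{Theorem \ref{supinfI} already gives the required upper bound, and the corollary then follows by algebra.} First make the reductions. If $\bns=1$ along a subsequence, \eqref{sup-inf:0} reads $\inf_{B_1}v_n\le C$, which follows from \eqref{lambdan}. If there is no blow up at the origin, use the Brezis--Li--Shafrir inequality as in your Step 2. Otherwise $\sup_{B_{1/2}}v_n=v_n(x_n)=-2(1+\bns)\log\delta_n$, and \eqref{sup+inf-I} becomes
\[
\inf_{B_1}v_n\le C_I+2(1+\bns)\log\delta_n-4\bns\log\epsilon_n=C_I-\tfrac{1-\bns}{1+\bns}\,v_n(x_n)+4\bns\log\tfrac{\delta_n}{\epsilon_n}.
\]
The last term is nonpositive because $\delta_n\le\epsilon_n$ in Case (I). The same computation with $|x_n|$ in place of $\epsilon_n$ handles \eqref{sup+inf-II}, since $\delta_n\le|x_n|$ there. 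Finally, \eqref{sup+inf-III} gives $\inf_{B_1}v_n\le C_{III}+2(1-\bns)\log\delta_n=C_{III}-\tfrac{1-\bns}{1+\bns}v_n(x_n)$ directly.

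This also removes the ``main obstacle'' you single out. The constants in Theorem \ref{supinfI} do not depend on $\bns$, and in this form no division by $1-\bns$ occurs.
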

Since any inequality of ``Sup+CInf" type surely fails as far as $\bns\equiv \bis>1$ (\cite{llty}), then
\eqref{sup-inf:0} is interesting as it encodes this fact, reducing just to $\inf\limits_{B_1}v_n\leq C$ whenever $\bns\equiv 1$.
Actually, we cannot neither claim that \eqref{sup-inf:0} is stronger than the ``Sup+CInf" in \cite{BCW0}, which was derived with
no boundary conditions assumptions of any sort.\\

 At this point, as far as we are concerned with the ``fast" blow-up, we are in position to refine the ``concentration without quantization"
 phenomenon in case \eqref{lm:intro.1} holds true with $\sg\leq \frac12$. However, we face here a second non trivial difficulty, which prevents one to
follow the standard path tracked by the well known mass-quantization result in \cite{ls}.  Indeed, even the decay obtained by these
 new sharp ``Sup+CInf" inequalities (Theorem \ref{supinfI}) is not enough to deduce that the mass outside the ``first bubble" vanishes in the limit.
 This is due to the fact that the decay obtained is not fast enough to compensate for the growth of the weight \eqref{H_n} due to
 $\al_{\ii,\sg}=1$: following the path tracked in \cite{ls} one is just left with some logarithmic divergent integrals which in turn yield
 new undeterminate forms.
 To overcome this problem we use two different arguments. In one of the asymptotically ``fast" blow ups,
 we will pursue a refined analysis of the Pohozaev identity, see Theorem \ref{quant:imp}, showing that either $\lm_\ii= 8\pi$ and
 $\sg=\frac12$ or $\lm_\ii=16\pi$ and $\sg=\frac14$. In other words, limited to this case, that term in the Pohozaev identity
 which in general contributes with the ``non quantized mass" (\cite{BCW1}), can be shown to  vanish in the limit. However,
 it turns out that this same argument fails for the second ``fast" blow up, since the analysis of that term in the Pohozaev identity
 yields once again a value of the mass which could span a full interval of real numbers (see Theorem \ref{quant:imp}). We solve this
 problem by a careful argument based on elliptic estimates, yielding a bound from above for an otherwise undeterminate form (see Lemma \ref{lem:barwn}).
 As a consequence we can prove that in fact only one of the extremal values of that interval is allowed, see Proposition \ref{caseII.quant}.\\

 On the other side, as far as we are concerned with the ``slow" blow-up, the newly derived Harnack type inequalities
 become trivial as far as $\al_{n,\sg}=1$, reflecting exactly what it happens for \eqref{sup-inf:0}, see \eqref{sup+inf-III} below.
 In particular, once again we miss the decay needed to prove that, even just in the relatively easier situation where $\lm_\ii<16\pi$,
 the mass outside the radial bubble vanishes in the limit. Therefore we need another argument here, which relies on the fact that this
 asymptotically radial blow up limit always comes with a mass larger than $8\pi$. Thus, limited to this case,
 we can exploit a careful adaptation of an argument in \cite{CLin0} (see also \cite{cosentino2025}), showing that a ``Sup+CInf" inequality holds for some $C$ large enough, see
 Lemma \ref{lemmasup+Cinfspecialcase}.
 It turns out that this allows one to conclude in a classical fashion (\cite{ls}) that the mass outside the ``radial bubble" vanishes in the limit, see
 Lemma \ref{LS-quant}. Defining
 $$
 \dt_n^{2(1+\bns)}:=e^{-\dsp v_n(x_n)}
 $$
we summarize these results with the following refinement of Theorem \ref{profile-new}.
\begin{theorem}\label{thm:13}
Let $v_n$ be any sequence of solutions of \eqref{eqbase}, \eqref{lambdan}, \eqref{H_n}, and \eqref{K+}.
Assume furthermore \eqref{blowup:hyp}, \eqref{bound},
$$
\max\limits_{\pa B_1} v_n-\min\limits_{\pa B_1} v_n\leq C,
$$
and
$$
\bns=\frac{\lm_n}{4\pi} \sg\leq 1,\quad\quad \bis=\frac{\lm_\ii}{4\pi} \sg=1.
$$
Then, possibly along a subsequence, only one of the following alternatives may occur:
$$\text{either }\quad \sg=\frac14\,\,\,\text{and}\,\,\, \lm_\ii=16\pi,$$
$$
\text{or }\,\,\, \sg=\frac12\,\,\, \text{and}\,\,\,\lm_\ii=8\pi, \,\,\,\text{in which case}\quad \frac{\epsilon_n}{\dt_n}\to +\ii, \quad \frac{|x_n|}{\dt_n}\to +\ii,
$$
\begin{equation}\label{III-intro}
\text{or }\quad  \sg\in \left(\frac14,\frac12\right) \,\,\, \text{and}\,\,\,\ \lm_\ii\in(8\pi,16\pi), \,\,\,\text{in which case}\quad \frac{\epsilon_n}{\dt_n} \to \epsilon_0>0,\quad \frac{|x_n|}{\dt_n} \to 0.\quad
\end{equation}
Moreover, in case \eqref{III-intro} holds, $v_n$ takes the form \eqref{profilevtilde:H1-IIb}.
\end{theorem}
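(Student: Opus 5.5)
The plan is to run the trichotomy of Theorem \ref{profile-new}, already established in \cite{BCW1}, at the threshold $\bis=1$, and then treat each regime separately, using the sharp Harnack inequalities of Theorem \ref{supinfI} to obtain decay away from the bubble. First rescale by $\dt_n$, setting $\tilde v_n(y)=v_n(\dt_n y+x_n)+2(1+\bns)\log\dt_n$. Depending on the behaviour of $\epsilon_n/\dt_n$ and $|x_n|/\dt_n$, this produces three cases. In the fast cases (I), (II) one has $\epsilon_n/\dt_n\to+\ii$, and the limit is a regular Liouville bubble of mass $8\pi$ centred either at $x_n$ with $|x_n|/\dt_n\to+\ii$, or in a configuration where the weight is essentially constant on the bubble scale. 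In the slow case (III) one has $\epsilon_n/\dt_n\to\epsilon_0>0$ and $|x_n|/\dt_n\to0$, and the limit solves $-\D w=(\epsilon_0^2+|y|^2)^{\bis}K(0)e^{w}$. Since $\bis=1$, this limit is asymptotically radial with total mass in $(8\pi,16\pi)$. The relation $\lm_\ii=4\pi/\sg$ then converts every mass identity into a condition on $\sg$: $\lm_\ii=16\pi$ gives $\sg=1/4$, $\lm_\ii=8\pi$ gives $\sg=1/2$, and $\lm_\ii\in(8\pi,16\pi)$ gives $\sg\in(1/4,1/2)$.

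For the fast regime with $|x_n|/\dt_n\to+\ii$, I would apply the Pohozaev identity on balls $B_r$ centred at the origin. Writing $\lm_\ii$ as the sum of the bubble mass and the residual term generated by $x\cdot\nabla H_n$, the identity gives in general a relation that leaves the mass inside an interval, as in \cite{BCW1}. The new input is the decay from Theorem \ref{supinfI} (the sharp Sup+CInf with coefficient $\tfrac{1-\bns}{1+\bns}$) together with the boundary oscillation bound. With these I expect to show that the residual term, which contains $\bns\int h_n^{2\bns-2}|x|^2K_ne^{v_n}$, either vanishes or concentrates entirely. That yields $\lm_\ii^2=8\pi\lm_\ii(1+\bis)$ in the form appropriate to Theorem \ref{quant:imp}, so $\lm_\ii\in\{8\pi,16\pi\}$. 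For the other fast case, where the Pohozaev argument still leaves an interval, I would use elliptic estimates on the Green representation of $v_n$ (Lemma \ref{lem:barwn}). These bound the undetermined form $\log(\epsilon_n/\dt_n)$ times the leftover mass, and should force the endpoint value, giving Proposition \ref{caseII.quant}. The endpoint in question is $\lm_\ii=8\pi$, $\sg=1/2$, with both ratios diverging.

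The slow case is the main obstacle, because at $\bns\to1$ the Harnack inequality degenerates to $\inf v_n\le C$ and gives no decay. Here I would exploit the fact that the radial limit carries mass strictly greater than $8\pi$. Following \cite{CLin0}, I would prove a genuine inequality $\sup_{B_{1/2}}v_n+C\inf_{B_1}v_n\le C'$ for some large $C$ (Lemma \ref{lemmasup+Cinfspecialcase}). The argument is by contradiction: compare with the radial solution and use the mass exceeding $8\pi$ to get decay faster than $-4\log|x|$ outside the bubble. Given this decay, the Li--Shafrir argument \cite{ls} shows that the mass outside $B_{R\dt_n}$ vanishes (Lemma \ref{LS-quant}). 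Hence $\lm_\ii$ equals the mass of the limiting radial bubble, which lies in $(8\pi,16\pi)$, and $v_n$ has the profile \eqref{profilevtilde:H1-IIb}. Finally, I would exclude $\lm_\ii=16\pi$ in case (III) and any mixed alternative, which gives the stated trichotomy.
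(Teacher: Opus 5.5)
Your slow-case plan is essentially the paper's: Lemma~\ref{lemmasup+Cinfspecialcase} from $\widetilde\beta>8\pi$, then the Li--Shafrir argument of Lemma~\ref{LS-quant}. In the fast regime, however, you have swapped what the Pohozaev identity and the Green-function argument can do, and this is a genuine gap.

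You apply the Pohozaev dichotomy to the regime $|x_n|/\delta_n\to+\infty$, i.e.\ Case (II), where by definition $\epsilon_n/|x_n|\to\bar\epsilon_0\in[0,+\infty)$. There, if $\bar\epsilon_0>0$, the residual term $2\bns\int_{B_r}\frac{\epsilon_n^2}{\epsilon_n^2+|x|^2}H_ne^{v_n}$ neither vanishes nor concentrates entirely. On the bubble centred at $x_n$ the factor $\frac{\epsilon_n^2}{\epsilon_n^2+|x|^2}$ is close to $\frac{\bar\epsilon_0^2}{1+\bar\epsilon_0^2}$, so even after the tail is killed the term tends to $16\pi\bis\frac{\bar\epsilon_0^2}{1+\bar\epsilon_0^2}$. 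Pohozaev then yields, besides $\sigma=\frac14$, the whole family $\sigma=\frac12\left(1-\frac{1}{1+\sqrt{1+\bar\epsilon_0^2}}\right)$, $\bar\epsilon_0>0$, which fills $(\frac14,\frac12)$.

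The clean outcome holds instead in Case (I), $\epsilon_n/t_n\to+\infty$. There the factor tends to $1$ on the bubble, the term tends to $16\pi\bis$, and $4\sigma^2-4\sigma+1=0$ gives $\sigma=\frac12$ whenever $\sigma>\frac14$. In that case $\sigma=\frac14$ comes from the a priori bound, not from Pohozaev.

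Accordingly, Proposition~\ref{caseII.quant} does not force $\sigma=\frac12$ but the other endpoint, $\bar\epsilon_0=0$, i.e.\ $\sigma=\frac14$ and $\lambda_\infty=16\pi$. The value $\sigma=\frac12$ would need $\bar\epsilon_0=+\infty$, which Case (II) excludes. Its mechanism is also not a bound on $\log(\epsilon_n/\delta_n)$ times a leftover mass. One writes $\lambda_\infty=8\pi(1+\tau)$, picks $d_n\to+\infty$ with $\int_{B_{d_n}}\bar V_ne^{\bar w_n}=8\pi+7\pi\tau$, and rescales by $d_n$. The Green representation (Lemma~\ref{lem:barwn}) then gives decay like $|y|^{-(4+\tau)}$ with a uniformly bounded coefficient $e^{\omega_n}(d_n/\bar\theta_n)^{2(1+\bns)}$. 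Since $\bns\le 1$, the weighted mass beyond $|y|=R$ is $O(R^{-\tau})$, contradicting the mass $\pi\tau$ left outside $B_{d_n}$.

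You also never invoke the bound $\lambda_\infty\le\frac{8\pi}{1-2\sigma}$ of Theorem~\ref{massboundarycontrol}. With $\lambda_\infty=\frac{4\pi}{\sigma}$ it gives $\sigma\ge\frac14$, and $\lambda_\infty<16\pi$ for $\sigma>\frac14$. This is what excludes a second bubble away from the first in every regime (see \eqref{estimate:vn+log-firstcase}, \eqref{estimate:vn+log-secondcase}, \eqref{wn*bounded}, \eqref{hypolsII-B}). Without it you have neither the annular Harnack decay that kills the Pohozaev tail nor hypothesis \eqref{hypols-lem} of Lemma~\ref{LS-quant}. That decay comes from Corollary~\ref{cor:scaled}, with gain $(\epsilon_n/\delta_n)^{-2\tau_0(1+\bns)}$ or $(|x_n|/\delta_n)^{-2\tau_0(1+\bns)}$. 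It does not come from the inequality with coefficient $\frac{1-\bns}{1+\bns}$, which degenerates as $\bns\to1$.

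Finally, in Case (III) $\lambda_\infty=16\pi$ cannot be excluded. If $\epsilon_0=0$ the limit has mass $16\pi$ and may be non-radial, and this is simply the first alternative. The properties $\epsilon_0>0$ and $x_n/\delta_n\to0$ in \eqref{III-intro} must be derived, not assumed:
\begin{itemize}
\item $\sigma=\frac12$ is impossible, since then $\lambda_\infty=8\pi<\widetilde\beta\le\lambda_\infty$;
\item for $\sigma\in(\frac14,\frac12)$, the bound $\widetilde\beta\le\lambda_\infty<16\pi$ forces $\epsilon_0>0$;
\item the unique radially decreasing limit then attains its maximum only at $y_0=0$.
\end{itemize}
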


Remark that \eqref{III-intro} is a particular case of the ``slow" decay (which is Case (III) in Theorem \ref{profile-new}) where we also
come up with the profile estimates generalizing \eqref{profilevtilde:H1-IIb} to the case $\bis=1$. It is also interesting that the limiting mass
$8\pi$ only occurs in one of the ``fast" blow up limits, which is Case (I) in Theorem \ref{profile-new}. However, it turns out that
the efforts made so far are not enough to obtain a full description of the profiles in the first two cases, $\sg=\frac14$ and $\sg=\frac12$.
We shortly illustrate the point hereafter. Consider for example the case $\sg=\frac12$, $\lm_\ii=8\pi$, then one knows that
$\int_{B_r}H_n e^{v_n} = 8\pi +o(1)$,
and $\int_{B_2\setminus B_r}H_n e^{v_n}=o(1)$ and the best possible estimate one could hope to find would be something like \eqref{profilev:H}, i.e.
a simple bubble which decays, after a suitable rescaling, as $-4\log(|y|)$ for $|y|\to +\ii$. Wired as it may sound, using this ``optimal" decay is not enough
to prove that the
integral itself, $\int_{B_2\setminus B_r}H_n e^{v_n}$, is convergent, since after scaling the growth due to $H_n$ is of order $|y|^{2\bns}\simeq |y|^2$.
Actually, the best possible estimates one could hope to find not only do not imply the vanishing of that integral but just give back
a logarithmic divergent quantity. This is not too surprising after all, since $\bis=1$ is the threshold value after which we face
blow up without concentration (\cite{llty}), a situation where having a minimal mass (Lemma \ref{minimalmasslemma}) of $8\pi$ at the blow up point does not
imply in general that $v_n \to -\ii$ locally uniformly far away from the blow up point. This rather elusive problem shows up again in another
subtle case for $\sg=\frac14$, i.e. $\lm_\ii=16\pi$, where it seems that one could face a non standard multiple bubbling phenomenon:
unlike the case of conical singularities of integer orders (\cite{bt2,dwz,KuLi,wz1,wz2}), the maximum points of two bubbles 
{could converge at the origin in principle at different rates}. We will discuss this issue in another paper.\\

This paper is organized as follows. In section \ref{sec2}, we state the main results from \cite{BCW1}. In section \ref{sec3} we derive a class of sharp Harnack inequalities and discuss their consequences. In section \ref{sec4}, we refine some of the results, thereby completing the derivation of mass-quantization in the ``fast" blow up regime. Lastly, in section \ref{sec5}, we treat the ``slow" blow-up case, while the Appendix contains the proofs of several technical lemmas.
\bigskip

\section{Blow up analysis: preliminary results from \cite{BCW1}.}\label{sec2}
Here and in the rest of this paper, we will often pass to subsequences which will not be relabeled.
Let us recall some results from \cite{BCW1}.

\begin{lemma}[The Minimal Mass Lemma (\cite{BCW1})]\label{minimalmasslemma}$\,$\\
Let $v_n$ be a sequence of solutions of \eqref{eqbase} and \eqref{lambdan} satisfying \eqref{blowup:hyp}, 
where $H_n$,  $K_n$ and $K$ satisfy \eqref{H_n} and \eqref{K+} respectively. Assume furthermore that,
\begin{equation*}
 H_ne^{\dis v_n} \rightharpoonup \eta \qquad \text{weakly in the sense of measures in}\, B_2,
\end{equation*}
for some bounded Radon measure $\eta$, where we define,
$$
\beta:=\eta(\{0\}).
$$
Then
\begin{equation}\label{Kzero}
 K(0)>0,
\end{equation}
and
\begin{align}
 \label{minimalmass}
\lm_\ii\geq \beta\geq8\pi.
\end{align}
\end{lemma}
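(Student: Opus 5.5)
We have to prove two things: that $K(0)>0$, and that $\lm_\ii\geq\beta\geq 8\pi$. The inequality $\lm_\ii\ge\beta$ is immediate. Weak convergence of measures and $H_n e^{v_n}\ge 0$ give
$$\eta(\{0\})\le \liminf_n \int_{B_2}H_ne^{v_n}=\lm_\ii.$$
So the real content is $K(0)>0$ and $\beta\ge 8\pi$. The plan is to argue by contradiction in both cases, using a Brezis--Merle type alternative.

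\textbf{Step 1: the Brezis--Merle estimate.} Split $v_n=w_n+h_n$ on $B_r$, where $w_n$ solves $-\D w_n=H_ne^{v_n}$ in $B_r$ with $w_n=0$ on $\pa B_r$, and $h_n$ is harmonic. The Brezis--Merle inequality gives a uniform bound on $\int_{B_r}e^{p|w_n|}$ for $p<4\pi/\|H_ne^{v_n}\|_{L^1(B_r)}$. If $\beta<8\pi$, we may pick $r$ small so that $\int_{B_r}H_ne^{v_n}\le 8\pi-\dt$ for $n$ large. Then $e^{w_n}\in L^{p}(B_r)$ for some $p>2$, in fact for some $p$ slightly above $1$ times what is needed.

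\textbf{Step 2: controlling the harmonic part.} We need $h_n^+$ bounded on $B_{r/2}$. Since $v_n$ is superharmonic ($H_n\ge0$), $v_n\ge\min_{\pa B_r}v_n$ in $B_r$. We use the bound \eqref{bound} and the integral bound in \eqref{eqbase} via the mean value property. Here one must handle that the weight $(\epsilon_n^2+|x|^2)^{\bns}$ may vanish at the origin in the limit. We only need $h_n^+$ bounded above, and the mean value of $h_n$ on circles is controlled because $\int_{B_r\setminus B_{r/4}}e^{v_n}\le C$ (the weight is bounded below away from $0$). Jensen then gives $\int_{\pa B_s} h_n\le C$ for $s\in(r/4,r)$. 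Harnack applied to $C-h_n$ yields $h_n\le C$ on $B_{r/2}$. Hence $e^{v_n}\le Ce^{w_n}$ is bounded in $L^p(B_{r/2})$ with $p>1$. Since $H_n$ is bounded, $H_ne^{v_n}$ is bounded in $L^p$. Elliptic estimates then make $w_n$ bounded in $L^\ii(B_{r/4})$, so $v_n$ is bounded above near $0$. This contradicts \eqref{blowup:hyp}. Therefore $\beta\ge 8\pi$, and in particular $\beta>0$.

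\textbf{Step 3: showing $K(0)>0$.} Suppose instead $K(0)=0$. Since $K\ge0$ is $C^1$, we have $K(x)\le C|x|$ near $0$, so
$$H_n\le C(\epsilon_n^2+|x|^2)^{\bns}(|x|+o(1)),$$
with the $o(1)$ uniform by $C^1$ convergence. This is the main obstacle: the $o(1)$ term $|K_n(0)|$ prevents a clean vanishing weight. First I would redo Step 1 with the improved integrability coming from the small factor $H_n\to 0$ uniformly on $B_r$ as $r\to0$. The available Brezis--Merle statement is $\int e^{(4\pi-\dt)|w|/\|f\|_1}$ bounded. The quantity that matters is $\|H_ne^{v_n}\|_{L^p}$, which is at most $\|H_n\|_\ii^{1-1/p}$ times integrals of $H_n^{1/p}e^{v_n}$.

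An alternative is to use the blow-up points directly. Rescale around $x_n$ by $\mu_n$ with $\mu_n^2 H_n(x_n)e^{v_n(x_n)}=1$, and consider the cases $H_n(x_n)\to0$ or not. Then use the Pohozaev identity in $B_r$. Its boundary terms are controlled by \eqref{bound}, and its interior term involves $x\cdot\nabla H_n\,e^{v_n}$. Writing
$$x\cdot\nabla H_n=2\bns\tfrac{|x|^2}{\epsilon_n^2+|x|^2}H_n+h_n^{2\bns}x\cdot\nabla K_n,$$
the Pohozaev identity in the limit gives
$$\beta^2-8\pi\beta\le 2\cdot 4\pi\,\bis\,\beta+\lim\int h_n^{2\bns}\,x\cdot\nabla K_n\,e^{v_n}.$$
When $K(0)=0$ one has $|x\cdot\nabla K_n|\lesssim |x|$ while $K_n\gtrsim$ nothing. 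So I would instead conclude through Brezis--Merle concentration-compactness for weights vanishing at a point (as in Bartolucci--Tarantello): $H_n$ behaves like $|x|^{2\al}K$ with $K$ vanishing. A local Sup+Inf / Harnack argument then shows that blow-up cannot happen where the weight vanishes faster than a power with positive coefficient. Making this last reduction rigorous is the step I expect to be hardest.
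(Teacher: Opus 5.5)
Your proposal has a genuine gap in each of its two substantive parts. In Step 1 the Brezis--Merle inequality does not give what you claim. It reads $\int_{B_r}\exp\big((4\pi-\delta')|w_n|/\|H_ne^{v_n}\|_{L^1(B_r)}\big)\le C$. So the bound $\|H_ne^{v_n}\|_{L^1(B_r)}\le 8\pi-\delta$ only yields $e^{|w_n|}\in L^p(B_r)$ for $p<(4\pi-\delta')/(8\pi-\delta)\approx\frac12$, not for some $p>1$. Steps 1--2 therefore exclude only $\beta<4\pi$; this is exactly why the Brezis--Merle alternative has threshold $4\pi$. No improvement of the harmonic-part estimate can cover $4\pi\le\beta<8\pi$.

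What is missing is a blow-up analysis at $x_n$ combined with the classification of entire solutions. Rescale around $x_n$ in the regimes (I), (II), (III) of Section \ref{sec2}, using $\mu_n=\epsilon_n^{-\alpha_{n,\sigma}}\delta_n^{1+\alpha_{n,\sigma}}$, resp. $\mu_n=|x_n|^{-\alpha_{n,\sigma}}\delta_n^{1+\alpha_{n,\sigma}}$, so that the weight is essentially frozen on $B_{R\mu_n}(x_n)$. In regime (III) take $\mu_n=\delta_n$, i.e. the function $\widetilde w_n$ of \eqref{phicaseIA}. The rescaled functions are $\le 0$, vanish at a bounded point and have locally bounded Laplacian. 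By Harnack they converge locally to an entire finite-mass solution of one of two equations. The first is $-\Delta\psi=c\,e^{\psi}$ with $c>0$, whose mass is $8\pi$ by Chen--Li. The second is $-\Delta\psi=K(0)(\epsilon_0^2+|y|^2)^{\alpha_{\infty,\sigma}}e^{\psi}$, whose mass is $\ge\max\{8\pi,4\pi(1+\alpha_{\infty,\sigma})\}$ by Lemma \ref{massstrange}. Since $B_{R\mu_n}(x_n)$ shrinks to $0$, upper semicontinuity on compact sets gives $\eta(\overline{B_r})\ge$ (mass of the limit) for every $r>0$, hence $\beta\ge 8\pi$. This argument needs $K(0)>0$ beforehand, otherwise the limiting equation degenerates, so the order of your Steps 2 and 3 must be reversed.

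Step 3 is not a proof. The Pohozaev route relies on \eqref{bound} and on control of $v_n$ away from the origin, and neither is a hypothesis of the lemma. The final appeal to a Sup+Inf/Harnack argument for vanishing weights is left unspecified. The idea you never use is that the integral bound in \eqref{eqbase} does not involve $K_n$. Take the rescalings just described and suppose $K(0)=0$. Then the local limit $\psi$ is harmonic on $\mathbb{R}^2$, bounded above by $0$ and vanishing at a point, hence $\psi\equiv 0$ by Liouville's theorem. But Fatou's lemma and the $K_n$-free bound give $\int_{\mathbb{R}^2}(\epsilon_0^2+|y|^2)^{\alpha_{\infty,\sigma}}e^{\psi}<\infty$ in regime (III), and $\int_{\mathbb{R}^2}e^{\psi}<\infty$ in regimes (I)--(II), a contradiction. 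This is the mechanism the paper uses in Remark \ref{remarksup+Cinf} (``since $\widetilde w$ is bounded from above, then necessarily $V(0)\neq 0$''); the lemma itself is only quoted from \cite{BCW1}.

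A minor point: in Step 2, ``Harnack applied to $C-h_n$'' is circular as written. Instead, bound $h_n^+\le e^{h_n}\le e^{v_n}$ in $L^1$ on the annulus $B_r\setminus B_{r/4}$, where the weight is bounded below. Then apply the mean value inequality to the subharmonic function $h_n^+$ there, and conclude with the maximum principle. The bound \eqref{bound} is neither needed nor available.
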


\bigskip

Next, let us recall a ``concentration"-type result in \cite{BCW1} under the following assumptions,
\[
 \text{for any}\,\,r\in(0,2),\exists \, C_r>0, \,\,\text{such that}:
\]
\begin{equation}
    \label{bounded}
   \underset{\overline{B_2\backslash B_r}}{\max}\, v_n\leq C_r,
\end{equation}
\begin{equation}
    \label{explosion}
    \underset{\overline{B_r}}\max\, v_n\to\infty.
\end{equation}

\begin{theorem}[Concentration (\cite{BCW1})]\label{Concentration}\hfill\\
Let $v_n$ be a sequence of solutions of \eqref{eqbase} and \eqref{lambdan}, where $K_n$ satisfies \eqref{K+}.
Assume furthermore that  \eqref{bounded}, \eqref{explosion} holds true. Then \eqref{minimalmass} holds true as well and if
\begin{equation}
 \label{conditionmixed}
 \sigma>0\,\,\,\,\,\,\text{and}\,\,\,\,\,\,\, \lambda_\infty\leq\tfrac{4\pi}{\sigma},
\end{equation}
then necessarily $\sg\leq \frac12$ and there exists a subsequence of $v_n$ such that
\begin{align}
 \label{-infinity}
 &v_{n}\to-\infty,\qquad\qquad \text{as}\,\,n\to\infty,\,\text{uniformly on compact sets of}\,\,B_2\backslash\{0\},
\\
 \nonumber
 &H_{n}e^{\dis v_{n}}\rightharpoonup \beta\delta_{p=0}, \quad\text{weakly in the sense of measures in}\,\, B_2,\\
 \label{mmbeta}
 & \text{where}\quad \lm_\ii\geq \beta\geq
            8\pi.
\end{align}
\end{theorem}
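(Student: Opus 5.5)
The plan is to use the Brezis--Merle alternative away from the origin, and to rule out the "bounded" branch by comparing the local integrability of the limit near $0$ with the Minimal Mass Lemma.

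\textbf{Step 1: minimal mass.} By \eqref{bounded} and \eqref{explosion}, the maxima of $v_n$ on $\overline{B_r}$ diverge for every $r$, while $v_n$ stays bounded above on $\overline{B_2\setminus B_r}$. So, after extraction, there are points $x_n\to 0$ with $v_n(x_n)=\max_{\overline{B_{r}}}v_n\to+\infty$, and $0$ is the only blow up point. Since $\int_{B_2}H_ne^{v_n}$ is bounded, we may also assume $H_ne^{v_n}\rightharpoonup\eta$ weakly in the sense of measures. The Minimal Mass Lemma \ref{minimalmasslemma} then gives $K(0)>0$ and $\lm_\ii\geq\beta=\eta(\{0\})\geq 8\pi$, which is \eqref{minimalmass}.

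\textbf{Step 2: alternative away from $0$.} Fix a compact $\mathcal K\subset B_2\setminus\{0\}$. There $H_n$ is uniformly bounded, $v_n$ is bounded above by \eqref{bounded}, and so $-\Delta v_n$ is uniformly bounded in $L^\infty$. Write $v_n=w_n+h_n$ on a neighbourhood of $\mathcal K$, with $w_n$ the Newtonian part (bounded in $C^{1,\gamma}$) and $h_n$ harmonic and bounded above. Harnack's inequality, applied on a connected neighbourhood of $\mathcal K$, yields the usual dichotomy, up to a subsequence:
\begin{itemize}
\item[(a)] either $v_n\to-\infty$ uniformly on compact subsets of $B_2\setminus\{0\}$;
\item[(b)] or $v_n$ is bounded in $L^\infty_{loc}(B_2\setminus\{0\})$, and then $v_n\to v$ in $C^2_{loc}(B_2\setminus\{0\})$.
\end{itemize}

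\textbf{Step 3: excluding (b).} This is the main point. In case (b), $\eta=He^{v}\,dx+\beta\delta_0$ with $H=|x|^{2\bis}K$, and $v$ solves
$$-\Delta v=|x|^{2\bis}Ke^{v}+\beta\delta_0\quad\mbox{in } B_2 .$$
\begin{itemize}
\item By Green's representation, $v=-\tfrac{\beta}{2\pi}\log|x|+O(1)$ near $0$, where the $O(1)$ term is justified by a Brezis--Merle type estimate on the regular part.
\item By Fatou's lemma, $|x|^{2\bis}Ke^{v}\in L^1(B_1)$.
\item Since $K(0)>0$, integrability near $0$ forces $2\bis-\tfrac{\beta}{2\pi}>-2$, that is $\beta<4\pi(1+\bis)$.
\item Assumption \eqref{conditionmixed} gives $\bis=\tfrac{\lm_\ii}{4\pi}\sg\leq 1$, hence $\beta<8\pi$, which contradicts Step 1.
\end{itemize}
The delicate part is the sharpness of the integrability threshold exactly at $\bis=1$. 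The $O(1)$ control is needed here, because a $\log$ correction could otherwise spoil the contradiction in the borderline case $\beta=8\pi$, $\bis=1$. If needed, I would instead integrate the equation on small discs and use $\int_{B_r}|x|^{2\bis}Ke^{v}\to 0$ as $r\to0$ to get the lower bound $v\geq -\tfrac{\beta}{2\pi}\log|x|-C$. This lower bound alone already gives non-integrability when $\tfrac{\beta}{2\pi}\geq 2+2\bis$.

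\textbf{Step 4: conclusion.} So (a) holds, which is \eqref{-infinity}. Since $H_n$ is uniformly bounded away from $0$, this gives $H_ne^{v_n}\to0$ locally uniformly in $B_2\setminus\{0\}$, hence $\eta=\beta\delta_0$ with $\lm_\ii\geq\beta\geq 8\pi$, as in \eqref{mmbeta}. Finally, combining $8\pi\leq\beta\leq\lm_\ii\leq\tfrac{4\pi}{\sg}$ gives $\sg\leq\tfrac12$.
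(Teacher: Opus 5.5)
This result is only quoted here from \cite{BCW1} without proof. Your argument is the standard one (as in \cite{bt}) and it is correct: a Harnack/Brezis--Merle alternative away from $0$, then ruling out the non-concentrating branch by comparing the integrability bound $\beta<4\pi(1+\al_{\ii,\sg})\leq 8\pi$ with the Minimal Mass Lemma.

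The step you flag as delicate is in fact immediate. In the representation $v(x)=\beta G_{B_1}(x,0)+\int_{B_1}G_{B_1}(x,y)He^{v}\,dy+h(x)$, with $h$ harmonic and bounded, the middle term is nonnegative. Hence $v\geq-\frac{\beta}{2\pi}\log|x|-C$ pointwise with no logarithmic loss, and $He^v\geq c|x|^{2\al_{\ii,\sg}-\frac{\beta}{2\pi}}$ with exponent $\leq-2$, which is non-integrable even when $\beta=8\pi$ and $\al_{\ii,\sg}=1$. So the two-sided $O(1)$ bound is unnecessary, and a Brezis--Merle estimate would not give it by itself anyway.
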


\medskip

As a consequence of Theorem \ref{Concentration}, in the same spirit of \cite{bt} as later refined in \cite{os}, we have the following concentration-compactness alternative:
\begin{theorem}[Concentration-Compactness]\label{Concentration-Compactness alternative}\hfill \\
Let $v_n$ be a sequence of solutions of \eqref{eqbase}, \eqref{lambdan}, where $K_n$ satisfies \eqref{K+}.
Assume furthermore that \eqref{conditionmixed} holds true.
Then, possibly along subsequence, one of the following alternatives is satisfied:
\begin{itemize}
 \item[(i)] $\underset{\om}\sup\, \big|v_{n}\big|\leq C_\O$, for every $\O\Subset B_2$;
 \item[(ii)] $\underset{\om}\sup\, v_{n}\to-\infty$, for every $\O\Subset B_2$;
 \item[(iii)] there exists a finite and nonempty set $S=\{q_1,\ldots,q_m\}\subset B_2$, $m\in\N$, and sequences of points
 $\{x_n^1\}_{n\in\N},\ldots,\{x_n^m\}_{n\in\N}\subset B_2$, such that $x_n^i\to q_i$ and $v_{n}(x_n^i)\to\infty$,
 for every $i\in\{1,\ldots,m\}$. Moreover,
 $\sup_\O\, v_{n}\to-\infty$
 on any compact set $\O\subset B_2\backslash S$ and $H_{n}e^{\dis v_{n}}\rightharpoonup \sum_{i=1}^m\beta_i\delta_{q_i}$
weakly in the sense of measures in $B_2$, with $\beta_i\in 8\pi\N$ if $q_i\neq0$, while if $q_i=0$ for some $i\in\{1,\ldots,m\}$ then $\sg\leq \frac12$ and
$\beta_i$ satisfies \eqref{mmbeta}. In particular $\lm_\ii\geq 8\pi m$.
\end{itemize}
\end{theorem}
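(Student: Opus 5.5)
This is the standard Brezis–Merle / Li–Shafrir concentration-compactness argument. The one new input is Theorem \ref{Concentration}, which handles the blow up point at the origin.

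\emph{Step 1: a local compactness principle.} The integrand $H_n e^{v_n}$ has total mass bounded by $\lm_n\to\lm_\ii$, so after passing to a subsequence $H_ne^{v_n}\rightharpoonup\eta$, a bounded Radon measure. Define the singular set
\[
S=\{q\in B_2:\ \exists\, y_n\to q,\ v_n(y_n)\to+\infty\}.
\]
Near any point $q\neq 0$ the weight $h_n^{2\bns}$ is smooth and bounded above and below, uniformly in $n$. There I would invoke Brezis–Merle: if $\eta(\{q\})<4\pi$, then $v_n^+$ is bounded in $L^\infty$ near $q$. Consequently each point of $S\setminus\{0\}$ carries mass at least $4\pi$. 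Since the total mass is bounded, $S$ is finite.

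\emph{Step 2: the trichotomy.} First suppose $S=\emptyset$. Then $v_n$ is locally bounded above and the right-hand side is bounded in $L^\infty_{loc}$. Harnack's inequality applied to $v_n-\inf$ (the Brezis–Merle alternative) gives either (i), local boundedness, or (ii), $v_n\to-\infty$ locally uniformly.

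Now suppose $S\neq\emptyset$. The same Harnack alternative applies on $B_2\setminus S$. Option (i) is excluded there, by the classical argument: if $v_n$ stayed locally bounded away from $S$, then Green's representation together with $\eta(\{q_i\})\ge 4\pi$ would force $e^{v_n}\gtrsim |x-q_i|^{-2}$ near $q_i$ in the limit, which contradicts the finite-mass bound. Hence $v_n\to-\infty$ on compact subsets of $B_2\setminus S$, and $\eta=\sum\beta_i\delta_{q_i}$.

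\emph{Step 3: quantization at each point.} For $q_i\neq0$ the weight is locally $C^1$ and positive, since $K(q_i)>0$ by the minimal mass argument. Li–Shafrir quantization then gives $\beta_i\in8\pi\N$. For $q_i=0$, I would localize to a small ball $B_r$ that contains no other point of $S$. On the annulus the hypotheses \eqref{bounded} and \eqref{explosion} hold, and \eqref{conditionmixed} is assumed. So Theorem \ref{Concentration} applies and yields $\sg\le\frac12$ together with \eqref{mmbeta}. This requires rescaling $B_r$ to $B_2$; I expect that to be a harmless change, since it only alters $\epsilon_n$ and $K_n$ by constants.

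Finally, every $\beta_i\ge8\pi$, so $\lm_\ii\ge\sum\beta_i\ge 8\pi m$.

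\emph{Main obstacle.} The delicate point is excluding alternative (i) off $S$ when $S\neq\emptyset$, uniformly including the singular point $0$. For $0\in S$ I would use \eqref{-infinity} from Theorem \ref{Concentration} directly on the small ball, and then propagate $v_n\to-\infty$ to the rest of $B_2\setminus S$ by Harnack connectivity.
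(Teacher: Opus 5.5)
Your proposal follows the route the paper indicates. The paper gives no detailed proof and only presents the result as a consequence of Theorem \ref{Concentration} in the spirit of \cite{bt,os}; your argument fills this in with Brezis--Merle and Li--Shafrir away from the origin, Theorem \ref{Concentration} on a small ball around $0$, and Harnack connectivity to propagate $v_n\to-\infty$. The one imprecision is your justification of the localization: rescaling $B_r$ to $B_2$ changes more than $\epsilon_n$ and $K_n$, because it replaces $\lm_n$ by the local mass $\int_{B_r}H_ne^{v_n}$ while the exponent remains $\bns=\frac{\lm_n}{4\pi}\sg$, so Theorem \ref{Concentration} cannot be quoted verbatim and should be applied through its proof, which only uses $\bis\le 1$ (and $8\pi\le\beta\le\lm_\ii\le\frac{4\pi}{\sg}$ still yields $\sg\le\frac12$).
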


\bigskip

The estimate about the value of the mass relative to the blow up at singular sources can be refined whenever
we have a mild control about the oscillation of $v_n$ at the boundary, as first noticed in \cite{yy}.

\begin{theorem}[Concentration without quantization (\cite{BCW1})] \label{massboundarycontrol} \hfill \\
Under the assumptions of Theorem \ref{Concentration} about $\vn$, $K_n$ and $\lm_n$, suppose in addition that $v_n$ satisfies,
 \begin{equation}
  \label{oscillation}
  \underset{\p B_1}\max\,v_n-\underset{\p B_1}\min\,v_n\leq C.
 \end{equation}
 Then
\begin{equation}\label{beqlm}
 \beta\equiv \lm_\ii
\end{equation}
and, as far as \eqref{conditionmixed} is satisfied, we have that,
\begin{equation*}
0<\sg\leq \frac12\,,
\end{equation*}
\begin{equation*}
8\pi\leq  \beta \leq 8\pi(1+\bis)=8\pi+2\sg\lm_\ii
\end{equation*}
and in particular
\begin{equation}\label{quantized:poslm}
8\pi\leq \lm_\ii\leq\min\left\{\frac{8\pi}{1-2\sg},\frac{4\pi}{\sg}\right\}.
\end{equation}
\end{theorem}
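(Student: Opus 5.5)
This statement is recalled from \cite{BCW1}, but here is how I would prove it. The argument has two parts: the identity $\beta=\lm_\ii$, and then the bounds on $\beta$, which come from a Pohozaev identity around the origin.

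\textbf{Step 1: $\beta=\lm_\ii$.} By Theorem \ref{Concentration}, $v_n\to-\ii$ uniformly on compact subsets of $B_2\setminus\{0\}$. The only thing to rule out is mass escaping toward $\pa B_2$.
\begin{itemize}
\item On $B_1$, Green's representation together with \eqref{oscillation} shows that $v_n-\min_{\pa B_1}v_n$ converges in $C^1_{loc}(\overline{B_1}\setminus\{0\})$ to $\beta G(x,0)$ plus a harmonic term.
\item On the annulus $B_2\setminus B_{1/2}$, the bound \eqref{bounded} gives $v_n\le C$, and we also have $v_n\to-\ii$ locally uniformly. Together these make $H_ne^{v_n}\to0$ uniformly on compact subsets of $B_2\setminus\{0\}$.
\item To handle the boundary, I would use the weak limit $\eta$ and the fact that $v_n$ is bounded above up to $\pa B_2$, following the argument for \eqref{Kzero}. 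This shows that $\eta$ cannot charge a neighbourhood of $\pa B_2$ either.
\end{itemize}
Hence $\lm_n\to\beta$, that is, \eqref{beqlm} holds.

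\textbf{Step 2: Pohozaev identity.} Apply the Pohozaev identity for $-\D v_n=h_n^{2\bns}K_ne^{v_n}$ on $B_r$:
$$
\int_{B_r}\Big(2+x\cdot\frac{\nabla H_n}{H_n}\Big)H_ne^{v_n}
= r\int_{\pa B_r}\Big(H_ne^{v_n}+|\pa_\nu v_n|^2-\tfrac12|\nabla v_n|^2\Big).
$$
The weight contributes the factor
$$
x\cdot\nabla \log h_n^{2\bns}=2\bns\frac{|x|^2}{\epsilon_n^2+|x|^2}\in[0,2\bns].
$$
The $K_n$ term is $O(r)$, using the $C^1$ bound on $K_n$.

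\textbf{Step 3: limits and bounds.} Let $n\to\ii$ and then $r\to0$.
\begin{itemize}
\item The boundary terms, computed with the limit $\beta G$, converge to $\beta^2/(4\pi)$.
\item The left-hand side tends to $2\beta+2\bis\,\tau$, where $\tau:=\lim_r\lim_n\int_{B_r}\frac{|x|^2}{\epsilon_n^2+|x|^2}H_ne^{v_n}\in[0,\beta]$.
\end{itemize}
This gives $\beta^2=8\pi\beta+8\pi\bis\tau$. Since $\tau\in[0,\beta]$, it follows that $8\pi\le\beta\le8\pi(1+\bis)$. Using $\bis=\frac{\lm_\ii}{4\pi}\sg$ and $\beta=\lm_\ii$, the upper bound becomes $\lm_\ii\le 8\pi+2\sg\lm_\ii$, i.e. $\lm_\ii(1-2\sg)\le8\pi$.
\begin{itemize}
\item If $\sg>\frac12$, this is incompatible with $\lm_\ii\ge8\pi$ (the upper bound becomes negative), which forces $\sg\le\frac12$. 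This agrees with Theorem \ref{Concentration}.
\item Combining with \eqref{conditionmixed} gives \eqref{quantized:poslm}.
\end{itemize}

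\textbf{Main obstacle.} The delicate point is Step 1: controlling the boundary and the possible loss of mass away from the origin, so that $\beta$ truly equals $\lm_\ii$. This is exactly where \eqref{oscillation} enters, through a uniform comparison $|v_n-\min_{\pa B_1}v_n-\beta G|=o(1)$ near $\pa B_1$. Without it, the boundary terms in Pohozaev would carry an undetermined harmonic contribution.
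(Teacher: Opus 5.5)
Your proof is correct in substance and follows the route the paper relies on. The statement is only quoted from \cite{BCW1}, but the proof of Theorem~\ref{quant:imp} reruns exactly this Pohozaev computation and arrives at \eqref{Pohozaevextraterm}. There it is applied to $\zeta_n=v_n-\min_{\pa B_1}v_n-s_n$, with the bounded harmonic part absorbed into the weight; this is only a cosmetic difference from working with $v_n$ directly. Your $2\bis\tau$ equals $2\bis\lm_\ii$ minus the limit of the $\frac{\epsilon_n^2}{\epsilon_n^2+|x|^2}$-term isolated there. Bounding that term between $0$ and $2\bis\lm_\ii$ is precisely your deduction of $8\pi\le\beta\le 8\pi(1+\bis)$.

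There is, however, one wrong step, concerning $\sg\le\frac12$.
\begin{itemize}
\item For $\sg>\frac12$ the inequality $\lm_\ii(1-2\sg)\le 8\pi$ has a negative left-hand side and holds automatically. Dividing by $1-2\sg<0$ gives a negative lower bound, not a negative upper bound. So the Pohozaev bound does not exclude $\sg>\frac12$.
\item What does exclude it is the lower bound together with \eqref{conditionmixed}: $8\pi\le\beta=\lm_\ii\le\frac{4\pi}{\sg}$ gives $\sg\le\frac12$. Only then, for $\sg<\frac12$, can you rewrite $\lm_\ii(1-2\sg)\le8\pi$ as $\lm_\ii\le\frac{8\pi}{1-2\sg}$.
\end{itemize}

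Your account of where \eqref{oscillation} enters is also misplaced.
\begin{itemize}
\item Once \eqref{-infinity} holds, $\beta=\lm_\ii$ follows from \eqref{bounded}, which is uniform up to $\pa B_2$, by dominated convergence. This is how the paper itself discards the outer mass at the end of the proof of Lemma~\ref{LS-quant}. The argument behind \eqref{Kzero} plays no role here.
\item \eqref{oscillation} is needed only for the Pohozaev boundary terms. It makes the harmonic extension $s_n$ of $v_n-\min_{\pa B_1}v_n$ bounded, so that $v_n-\min_{\pa B_1}v_n=\beta G(\cdot,0)+s_n+o(1)$ in $C^1_{\rm loc}(B_1\setminus\{0\})$.
\item This is not the comparison $|v_n-\min_{\pa B_1}v_n-\beta G|=o(1)$ claimed in your last paragraph, since $s_n$ is bounded but not small. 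What matters is only that its gradient stays bounded near the origin. It then contributes $O(r)$ to the boundary integrals on $\pa B_r$.
\end{itemize}
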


\bigskip

We will need in the sequel three auxiliary functions, to be used
over and over during the blow up argument. To simplify the exposition, we list them hereafter. Moreover, here and in the rest of the paper we set,
$$
c_0=K(0).
$$
Under the assumptions of Lemma \ref{minimalmasslemma},
let us set,
\begin{equation*}
     \delta_n^{2(1+\bns)}=e^{-\dis {v_n(x_n)}}\to 0,\,\, \text{as} \,\, n\to\infty,
\end{equation*}
and
\[
 t_n=\max\{|x_n|,\delta_n\}\to 0,\,\, \text{as} \,\, n\to\infty.
\]

We are naturally led to analyze three different cases:
\begin{itemize}
 \item Case \;(I): there exists a subsequence such that
 \begin{equation}
 \label{Hypothesis:CaseI}
 \tfrac{\epsilon_n}{t_n}\to+\infty\,\,\text{as}\;\; n\to+\infty;
\end{equation}
 \item Case (II): there exists a constant $C_1>0$ such that
\begin{equation}
 \label{Hypothesis:CaseII}
 \frac{\epsilon_n}{t_n}\leq C_1,\,\,\text{for all}\,\,n\in\N
\end{equation}
 and, possibly along a subsequence, $\tfrac{|x_n|}{\delta_n}\to+\infty$, as $n\to+\infty$;
 \item Case (III): there exist constants $C_1>0$ and $C_2>0$ such that
 \begin{equation}
 \label{Hypothesis:CaseIII}
 \frac{\epsilon_n}{t_n}\leq C_1,\quad \frac{|x_n|}{\delta_n}\leq C_2,\,\,\text{for all}\,\,n\in\N.
\end{equation}
\end{itemize}

Concerning Case (I) we see that,
\begin{equation}
 \label{epsilondelta&epsilonxn}
 \frac{\epsilon_n}{\delta_n}\to+\infty\qquad\text{and}\qquad \frac{\epsilon_n}{|x_n|}\to+\infty,
\end{equation}
and, for some $r>0$ such that $\min\limits_{\overline B_r}K>0,$ we define,
\begin{equation}\label{bn1}
B^{(n)}=B_{\frac{r}{\epsilon_n}}(0),
\end{equation}
and
\begin{equation}\label{wn:IA}
 w_n(z)=v_n(\epsilon_n z)+2(1+\bns)\log\epsilon_n\quad\text{in}\quad B^{(n)},
\end{equation}
which therefore satisfies,
\begin{align}\label{eqwn:IA2}
\begin{cases}
-\D w_n=V_n(z)e^{\dis w_n} \quad\text{in}\quad B^{(n)},\\
V_n(z)=\left({1+|z|^2}\right)^{\bns} K_n(\epsilon_n z)\to
c_0\left({1+|z|^2}\right)^{\bis}\;\mbox{in}\;C^t_{loc}(\R^2),\\
\int\limits_{B^{(n)}}\left({1+|z|^2}\right)^{\bns}e^{\dis w_n}\leq C,\\
\int\limits_{B^{(n)}}V_n e^{\dis w_n}= \lm_n +o(1) = \lm_\ii +o(1),\\
w_n(\tfrac{x_n}{\epsilon_n})=2(1+\bns)\log(\tfrac{\epsilon_n}{\delta_n})\to +\infty.
\end{cases}
\end{align}

Concerning  Case (II) we notice  that
$\delta_n\leq|x_n|$, for $n$ large enough, whence
\begin{equation}
 \label{epsilonxn}
 \frac{\epsilon_n}{|x_n|}\leq C,\,\,\text{for all}\,\,n\in\N,
\end{equation}
for some $C>0$ constant. Thus, possibly along a subsequence,
in particular we have that,
\[
 \frac{\epsilon_n}{|x_n|}\to\bar \epsilon_0\geq  0,\quad \frac{x_n}{|x_n|}\to \bar z_0,\;\;|\bar z_0|=1,\quad \text{as}\,\,n\to+\infty.
\]
 In this situation we define,
\begin{equation}\label{dn1}
D^{(n)}=B_{\frac{r}{|x_{n}|}}(0),
\end{equation}
\begin{equation}\label{barwn:IB}
 \bar w_{n}(z)=v_n(|x_{n}| z)+2(1+\bns)\log(|x_{n}|)\quad\text{in}\quad D^{(n)},
\end{equation}
which satisfies
\begin{align}\label{eqwn:IB}
\begin{cases}
-\D \bar w_{n}=\bar V_{n}(z)e^{\dis \bar w_{n}} \quad\text{in}\quad D^{(n)},\\
\bar V_{n}(z)=\left({\frac{\epsilon_n^2}{|x_{n}|^2}+|z|^2}\right)^{\bns}K_n(|x_{n}| z)\to c_0(\bar \epsilon_0^2+|z|^2)^{\bis}\;\mbox{in}\;C^t_{loc}(\R^2),\\
\int\limits_{D^{(n)}}\left({\frac{\epsilon_n^2}{|x_{n}|^2}+|z|^2}\right)^{\bns} e^{\dis \bar w_n}\leq C,\\
\int\limits_{D^{(n)}}\bar V_n e^{\dis \bar w_n}= \lm_n +o(1) = \lm_\ii +o(1),\\
\bar w_{n}(\tfrac{x_{n}}{|x_{n}|})=2(1+\bns)\log(\tfrac{|x_{n}|}{\delta_{n}})\to +\infty.
\end{cases}
\end{align}

At last, concerning Case (III), we see that \eqref{Hypothesis:CaseIII}, 
together with the fact that $\tfrac{|x_n|}{\delta_n}$ is uniformly bounded, implies that
\begin{equation*}
 \frac{\epsilon_n}{\delta_n}\leq C,\,\,\text{for all}\,\,n\in\N,
\end{equation*}
for some constant $C>0$. Indeed we have that,
\begin{align*}
 \frac{\epsilon_n}{\delta_n}=\frac{\epsilon_n}{t_n}\frac{t_n}{|x_n|}\frac{|x_n|}{\delta_n}=\begin{cases}
         \frac{\epsilon_n}{t_n},&\quad\text{if}\,\,\delta_n\geq|x_n|,\\
         \frac{\epsilon_n}{t_n}\frac{|x_n|}{\delta_n}, &\quad\text{if}\,\,\delta_n\leq|x_n|,                                                                                    \end{cases}
\end{align*}
immediately implying that, $\frac{\epsilon_n}{\delta_n}\leq\max\{{C_1},{C_1C_2}\}$, $n\in\N$.\\
Let us define
\begin{equation}
\label{dn2}
D_n=B_{\frac{r}{\delta_n}}(0),
\end{equation}
and possibly along a subsequence, assume without loss of generality that,
\begin{equation*}
 \frac{x_n}{\delta_n}\to y_0, \,\,\text{as}\,\,n\to+\infty
\end{equation*}
 and
\begin{equation*}
 \frac{\epsilon_n}{\delta_n}\to \epsilon_0, \,\,\text{as}\,\,n\to+\infty,
\end{equation*}
for some $y_0\in\R^2$ and $\epsilon_0\geq 0$. At this point, we define,
\begin{equation}
 \label{phicaseIA}
 \widetilde w_n(y)=v_n(\delta_n y)+2(1+\bns)\log \delta_n=v_n(\delta_n y)-v_n(x_n)\quad\text{in}\,\, D_n,
\end{equation}
which satisfies,
\begin{align}\label{equationgoodcase}
\begin{cases}
 -\D \widetilde w_n=\left({\frac{\epsilon_n^2}{\delta_n^2}+\left|y\right|^2}\right)^{\al_{n,\sg}}
 K_n(\delta_n y)e^{\dis \widetilde w_n} \qquad\text{in}\,\,D_n, \\
\widetilde{V}_n(y)= K_n(\delta_n y)\left({\frac{\epsilon_n^2}{\delta_n^2}+\left|y\right|^2}\right)^{\al_{n,\sg}}\to c_0( \epsilon_0^2+|y|^2)^{\al_{\ii,\sg}}\;\mbox{in}\;C^t_{loc}(\R^2),
 \\
\int_{D_n}\left({\frac{\epsilon_n^2}{\delta_n^2}+|y|^2}\right)^{\bns}e^{\dis \widetilde w_n}\leq C,\\
\int\limits_{D_n}\widetilde V_n e^{\dis \widetilde w_n}= \lm_n +o(1) = \lm_\ii +o(1),\\
\widetilde w_n(y)\leq\widetilde w_n(\tfrac{x_n}{\delta_n})=\max\limits_{D_n}\widetilde w_n=0.
\end{cases}
\end{align}

Corresponding to each one of the three cases above, in \cite{BCW1} we obtained sharp profile
estimates generalizing the known results in the regular case (i.e. $\al_{n,\sg}\equiv 0$, \cite{yy}) and singular case 
(i.e. $\epsilon_n\equiv 0$, \cite{bclt}).
We conclude this section with the statement of the main result in \cite{BCW1}. 
One of the allowed limiting profiles (which occurs in Case (III) below), corresponds after blow up to solutions of the following planar problem,
\begin{align}
\begin{cases}\label{profile:tildew}
 -\D \widetilde w=\widetilde V(y) e^{\dis \widetilde w} \quad\text{in}\quad \R^2, \\
\widetilde V(y)=c_0\left(\epsilon_0^2+|y|^2\right)^{\bis},\quad c_0>0,\\
\widetilde \beta := \int_{\R^2}\widetilde V e^{\dis \widetilde w}\leq\lm_\ii,\\
\widetilde w(y)\leq \widetilde w(y_0)=0,\quad y_0\in\R^2.
\end{cases}
\end{align}
See section \ref{sec:planar} below for more about \eqref{profile:tildew}.

\begin{theorem}[Profile estimates with lack of quantization (\cite{BCW1})]\label{profile-new} Under the hypothesis of Theorem \ref{massboundarycontrol},
assume furthermore that
$$
\sg\in\left(0,\frac12\right)\quad \mbox{and} \quad 8\pi\leq \lm_\ii< \frac{4\pi}{\sg},
$$
and set
\[
 t_n=\max\{\delta_n,|x_n|\}.
\]
Then, either,
\begin{itemize}
 \item \mbox{\rm (I):}  there exists a subsequence such that $\tfrac{\epsilon_n}{t_n}\to+\infty$,
\end{itemize}
in which case we have $\lm_\ii=8\pi$ and
\begin{equation}\label{profilev:H}
v_n(x)=\log\left(\dfrac{e^{\dis v_n(x_n)}}
{\left(1+\gamma_n\theta_n^{2(1+\bns)}\epsilon_n^{-\frac{\lm_n}{4\pi}}|x-{x_n}|^{\frac{\lm_n}{4\pi}}\right)^2}\right)+O(1),\quad x\in B_{r}(0);
\end{equation}
where
$\theta_n^{2(1+\al_{n,\sg})}=
\left(\tfrac{\epsilon_n}{\delta_n}\right)^{2(1+\al_{n,\sg})}\to +\ii$, $8 \gamma_n={(1+|\frac{x_n}{\epsilon_n}|^2)^{\bns} K_n({x_n})}$, or
\begin{itemize}
 \item \mbox{\rm (II):} there exists a subsequence such that $\tfrac{\epsilon_n}{t_n}\leq C$ and $\tfrac{|x_n|}{\delta_n}\to+\infty$,
\end{itemize}
in which case we have $\lm_\ii=8\pi$ and
\begin{equation}\label{profilev:H1}
v_n(x)=\log\left(\dfrac{e^{\dis v_n(x_n)}}
{\left(1+\bar\gamma_n\bar \theta_n^{2(1+\al_{n,\sg})}|x_n|^{-\frac{\lm_n}{4\pi}} |x-{x_n}|^{\frac{\lm_n}{4\pi}}\right)^2}\right)+O(1),\quad x\in B_{r}(0),
\end{equation}
where $\bar \theta_n^{2(1+\al_{n,\sg})}=\left(\tfrac{|x_{n}|}{\delta_n}\right)^{2(1+\al_{n,\sg})}\to +\ii$,
$8\bar\gamma_n={((\tfrac{\epsilon_n}{|x_n|})^2+1)^{\bns}}K_n(x_n)$, or
\begin{itemize}
 \item \mbox{\rm (III):} there exists a subsequence such that $\tfrac{\epsilon_n}{t_n}\leq C$ and
 $\tfrac{|x_n|}{\delta_n}\leq C$,
\end{itemize}
in which case, along a further subsequence if necessary, we have $\tfrac{\epsilon_n}{\delta_n}\to \epsilon_0\geq 0$ and\\ $\lm_\ii\in (8\pi,\tfrac{8\pi}{1-2\sg}]$, if $\sigma\in(0,\tfrac{1}{4})$, or $\lm_\ii\in(8\pi,\tfrac{4\pi}{\sigma})$, if $\sg\in[\tfrac{1}{4},\tfrac12)$ and
\begin{equation}\label{profilevtilde:H1-IIb}
 v_n(x)=v_n(x_n)+ \widetilde U_n(\delta_n^{-1}x)+O(1), \quad x\in B_{r}(0),
\end{equation}

where
\begin{equation*}
    \widetilde U_n(y)=\graf{\widetilde w(y)+O(1),\quad |y|\leq R, \\ -\frac{\lm_n}{2\pi}\log(|y|) + O(1),\quad R\leq  |y|\leq r\delta_n^{-1}}
\end{equation*}
and $\widetilde w$ is the unique solution of \eqref{profile:tildew}.
\end{theorem}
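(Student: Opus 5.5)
The plan is to run a blow up argument separately in each of the three regimes (I), (II), (III) fixed before the statement, using the rescalings \eqref{wn:IA}, \eqref{barwn:IB}, \eqref{phicaseIA}. The decisive tool is the ``Sup+CInf'' inequality of \cite{BCW0}. It is available because $\bis<1$ here, since $\lm_\ii<\frac{4\pi}{\sg}$. It provides a decay of $v_n$ away from the local maximum at a rate strictly faster than the growth of the weight $h_n^{2\bns}$. Combined with $\beta=\lm_\ii$ from Theorem \ref{massboundarycontrol} and the Pohozaev identity, this will pin down the mass and the profile.

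\emph{Case (I).} The rescaled $w_n$ solve \eqref{eqwn:IA2}, with $w_n(x_n/\epsilon_n)\to+\ii$ and $x_n/\epsilon_n\to0$. Near $z=0$ the weight $V_n$ is smooth and positive, with value $(1+|x_n/\epsilon_n|^2)^{\bns}K_n(x_n)$. I would therefore rescale once more around $x_n/\epsilon_n$ at scale $\delta_n/\epsilon_n\to0$. The limit is a regular Liouville equation $-\D U=8\gamma e^{U}$ on $\R^2$ with finite mass, hence a standard bubble of mass $8\pi$. To pass from this local picture to the global profile \eqref{profilev:H}, I would use the Sup+CInf inequality of \cite{BCW0} together with the Green representation formula, in the style of Y.Y. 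Li's Harnack estimate and of \cite{bclt}. This shows $|v_n-U_n|\le C$ on $B_r$, where $U_n$ is the explicit bubble. Outside the bubble the decay is $-\frac{\lm_n}{2\pi}\log|y|$ with $\lm_n\ge 8\pi-o(1)$, while the weight grows at most like $|y|^{2\bns}$ with $\bns<1$. The integrand thus behaves like $|y|^{2\bns-4}$, which is integrable, so the mass outside the bubble scale tends to zero. Since $\beta=\lm_\ii$, this gives $\lm_\ii=8\pi$.

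\emph{Case (II).} Here I would argue identically with $\bar w_n$ from \eqref{eqwn:IB}. The blow up point now sits at $\bar z_0$ with $|\bar z_0|=1$, where the limit weight $c_0(\bar\epsilon_0^2+|z|^2)^{\bis}$ is positive. A second rescaling at scale $\delta_n/|x_n|\to0$ gives once more a regular bubble. The same Harnack/Green argument then yields \eqref{profilev:H1} and $\lm_\ii=8\pi$.

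\emph{Case (III).} Here $\widetilde w_n$ is locally bounded above by $0$ and equals $0$ at $x_n/\delta_n\to y_0$. By Harnack and elliptic estimates it converges in $C^2_{loc}$ to a solution $\widetilde w$ of \eqref{profile:tildew}. I would then use the results recalled in section \ref{sec:planar}: the radial symmetry of \cite{GM1} and uniqueness for the weight $(\epsilon_0^2+|y|^2)^{\bis}$. Together with the Pohozaev identity, these identify $\widetilde\beta$ and give the asymptotics $\widetilde w=-\frac{\widetilde\beta}{2\pi}\log|y|+O(1)$. The Sup+CInf inequality again gives decay in the neck region $R\le|y|\le r\delta_n^{-1}$. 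The mass there vanishes, so $\widetilde\beta=\lm_\ii$ and the profile \eqref{profilevtilde:H1-IIb} follows. Finally, the strict inequality $\lm_\ii>8\pi$ and the upper bounds $\frac{8\pi}{1-2\sg}$ or $\frac{4\pi}{\sg}$ come from the Pohozaev identity of the limit problem with $\epsilon_0\ge0$, where $\epsilon_0>0$ forces a strict inequality, intersected with \eqref{quantized:poslm}.

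The main obstacle is the neck analysis, i.e.\ proving that no mass is lost between the bubble scale and the fixed scale $r$. This requires quantifying that the Sup+CInf exponent, of order $\frac{1-\bns}{1+\bns}>0$, gives decay strong enough to beat the growth of $h_n^{2\bns}$. In Case (I) there is an additional difficulty, because the weight changes its behaviour at the intermediate scale $\epsilon_n$. I would treat this by splitting the annulus at $|x|\sim\epsilon_n$ and estimating each piece with the Green representation.
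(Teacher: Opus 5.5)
The paper only quotes this theorem from \cite{BCW1}, but its introduction and the $\bis=1$ analogues in the proofs of Theorem \ref{quant:imp} and Theorem \ref{thm:13} indicate the route, and your architecture matches it:
\begin{itemize}
\item a regular bubble of mass $8\pi$ in Cases (I)--(II);
\item convergence to the radial solution of \eqref{profile:tildew} in Case (III), with $8\pi<\widetilde\beta\le 8\pi(1+\bis)$ from Lemma \ref{massstrange};
\item a Li--Shafrir type neck estimate based on the ``Sup+CInf'' inequality of \cite{BCW0}, available since $\bis<1$;
\item finally $\beta=\lm_\ii$.
\end{itemize}

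\textbf{The gap is the neck step.} You rightly call it the main obstacle, but you treat it as purely quantitative. A ``Sup+CInf'' inequality, even rescaled to every radius $s$, controls only $\inf_{B_s}v_n=\min_{\pa B_s}v_n$. It gives no information on $\sup_{\pa B_s}v_n$, and it does not exclude a second bubble on the circle $|x|=s$ at some intermediate scale.

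To get pointwise decay you must combine it with the Harnack inequality on the annuli $B_{2s}\setminus B_{s/2}$. That in turn requires knowing first that $v_n(x)+2(1+\bns)\log|x|\le C$ on the whole neck; compare \eqref{estimate:vn+log-firstcase}, \eqref{hypolsII-B}, \eqref{hypols-lem}. Neither the Green representation formula nor a sharper exponent gives this. It is proved by contradiction and mass counting:
\begin{itemize}
\item Suppose $v_n(y_n)+2(1+\bns)\log|y_n|\to+\infty$ in the neck.
\item Rescaling by $|y_n|$ produces a blow up point on the unit circle, where the weight is regular and positive.
\item By Lemma 2.2 in \cite{bclt} and \cite{yy}, this point carries mass $8\pi$. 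That mass is disjoint from the core, which carries at least $8\pi$.
\item This contradicts $\lm_\ii<16\pi$. The bound holds because $\lm_\ii\le\min\{\frac{8\pi}{1-2\sg},\frac{4\pi}{\sg}\}$ together with $\lm_\ii<\frac{4\pi}{\sg}$.
\end{itemize}
This bound is exactly the ingredient the introduction names for \cite{BCW1} (``since $\lm_\ii<16\pi$''), and your proposal never uses it. Without it, the claim that the neck carries no mass is unsupported. Consequently so are $\lm_\ii=8\pi$ in Cases (I)--(II) and $\lm_\ii=\widetilde\beta$ in Case (III).

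Relatedly, before the neck mass is known to vanish, the only decay available is the marginal one, $e^{v_n}\le C\dt_n^{c}|x|^{-2(1+\bns)(1+c')}$ with small $c,c'>0$. This suffices for the Li--Shafrir integration. Appealing instead to the $|y|^{2\bns-4}$ tail of the final profile at that stage is circular.

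\textbf{A smaller slip in the rescaling.} In Case (I) the second rescaling must be at scale $e^{-w_n(z_n)/2}=\theta_n^{-(1+\bns)}=(\dt_n/\eps_n)^{1+\bns}$, as in \eqref{yy-convI.0}--\eqref{yy-convI1}, not $\dt_n/\eps_n$. With your scale the rescaled function equals $2\bns\log(\eps_n/\dt_n)\to+\infty$ at the origin, so no limiting equation is obtained. In Case (II) the scale is likewise $(\dt_n/|x_n|)^{1+\bns}$. Alternatively, you can skip the second rescaling: the argument of Remark \ref{rem:reg} (bounded oscillation via Lemma 2.2 in \cite{bclt}, plus \cite{yy}) gives the simple blow up, the local mass $8\pi$ and the local profile directly.
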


\bigskip
\bigskip

\section{``Fast" blow up: sharp Harnack-type inequalities and a refinement of Theorem \ref{massboundarycontrol}. }\label{sec3}
In this section we derive a new class of sharp Harnack type inequalities and discuss some of their consequences.\\
We recall that, in view of \eqref{K+} and \eqref{Kzero}, after suitable rescaling if needed, there is no loss of generality in assuming that
\begin{equation}\label{defa}
\min\limits_{\overline{B_{1}}} K\geq a>0
\end{equation}
and
\begin{equation}
    \label{gradientbound}
    \sup_{\overline{B_{1}}}\|\nabla K_n\|_\ii\leq A,
\end{equation}
for some uniform constant $A>0$.
We will use $C$ to denote various constants which may possibly change even line to line.
\begin{theorem}\label{supinfI}
Under the hypothesis of Theorem \ref{massboundarycontrol},
assume furthermore that 
$$
\bns=\frac{\lm_n}{4\pi} \sg\leq 1.
$$
If the condition of CASE (I) in Theorem \ref{profile-new} is satisfied, that is $\tfrac{\epsilon_n}{t_n}\to+\infty$,
then, there exists $C_I=C_I(a,A,\|K\|_\ii)>0$ such that,
\begin{equation}\label{sup+inf-I}
v_n(x_n)+\inf\limits_{B_1} v_n\leq C_I -2(1+\bns)\log\epsilon_n+2(1-\bns)\log(\epsilon_n).
\end{equation}

If the condition of CASE (II) in Theorem \ref{profile-new} is satisfied,
that is, there exists a subsequence such that $\tfrac{\epsilon_n}{t_n}\leq C$ and $\tfrac{|x_n|}{\delta_n}\to+\infty$,
then, there exists $C_{II}=C_{II}(a,A,\|K\|_\ii)>0$ such that,
\begin{equation}\label{sup+inf-II}
v_n(x_n)+\inf\limits_{B_1} v_n\leq C_{II} -2(1+\bns)\log|x_n|+2(1-\bns)\log(|x_n|).
\end{equation}
If the condition of CASE (III) in Theorem \ref{profile-new} is satisfied,
that is, there exists a subsequence such that $\tfrac{\epsilon_n}{t_n}\leq C$ and $\tfrac{|x_n|}{\delta_n}\leq C$, then, there exists $C_{III}=C_{III}(a,A,\|K\|_\ii)>0$ such that,
\begin{equation}\label{sup+inf-III}
v_n(x_n)+\inf\limits_{B_1} v_n\leq C_{III}-2(1+\bns)\log\delta_n+2(1-\bns)\log\delta_n.
\end{equation}

\end{theorem}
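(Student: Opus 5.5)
We prove all three estimates by the same mechanism, working in the scaled variables of Section \ref{sec2}: $w_n$ from \eqref{wn:IA} in Case (I), $\bar w_n$ from \eqref{barwn:IB} in Case (II), and $\widetilde w_n$ from \eqref{phicaseIA} in Case (III). Let $s_n$ be the scaling length, i.e. $s_n=\epsilon_n,|x_n|,\delta_n$ respectively.

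\textbf{Reduction to a decay estimate.} Each of \eqref{sup+inf-I}, \eqref{sup+inf-II}, \eqref{sup+inf-III} can be rewritten as
$$
W_n(z_n)+\inf_{|z|\le 1/s_n} W_n(z)\le C+2(1-\bns)\log s_n ,
$$
where $W_n$ is the rescaled function and $z_n=x_n/s_n$. Note that $-2(1-\bns)\log s_n\ge 0$ is exactly $2(1-\bns)\log$ of the radius $1/s_n$ of the rescaled ball. So it suffices to show that $W_n$ decays at least like $W_n(z_n)-2(1-\bns)\log|z|$ plus a constant, relative to the maximum. More precisely, we aim for
$$
W_n(z)+W_n(z_n)\le C-2(1-\bns)\log|z|
$$
at some point with $|z|\sim 1/s_n$. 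Taking the infimum over $B_1$ then gives the claim.

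\textbf{Main step: moving planes / Kelvin transform.} The key ingredient is a moving plane argument in the spirit of \cite{bclt,bls,T1}. Fix $R_n\to\infty$ with $R_n\le 1/s_n$. On the annulus $1\le |z|\le 1/s_n$ we compare $W_n$ with its Kelvin-type reflection across circles $|z|=\lambda$:
$$
W_n^\lambda(z)=W_n\!\left(\tfrac{\lambda^2 z}{|z|^2}\right)+2(1+\bns)\log\tfrac{\lambda}{|z|} .
$$
Since $(s^2+|z|^2)^{\bns}$ is radial, the reflected function satisfies an equation with weight $V_n(\lambda^2z/|z|^2)(\lambda/|z|)^{4+2\bns}\cdot(\ldots)$. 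Because $\bns\le 1$, the singular weight factor is monotone in the right direction: $|z|^{2\bns}\ge(\lambda^2/|z|)^{2\bns}$ outside the circle. The obstacle is that $K_n$ is not constant. We handle this as usual, by adding a small auxiliary function of order $A\,s_n|z|$ (this is where the gradient bound \eqref{gradientbound} and $a$ from \eqref{defa} enter), which makes the comparison
$$
W_n<W_n^\lambda
$$
start near $\lambda\sim 1$ (i.e. near the bubble) and then continue up to $\lambda\sim 1/s_n$. To start the process near the bubble we use the profile information of Theorem \ref{profile-new}, or in the borderline case $\bis=1$ its analogue via the local limit \eqref{eqwn:IA2}, \eqref{eqwn:IB} or \eqref{equationgoodcase}. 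This is the step we expect to be the main obstacle: the weight grows like $|z|^{2\bns}$ with $\bns$ close to $1$, and the boundary behaviour on $|z|=1/s_n$ must be controlled. For the latter we invoke the oscillation condition \eqref{oscillation}, which makes $W_n$ essentially constant there. That boundary control is what excludes the failure mode of \cite{llty}.

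\textbf{Conclusion.} The moving plane comparison yields a weak radial-type monotonicity: for $|z|\ge\lambda$,
$$
W_n(z)\le W_n\!\left(\tfrac{\lambda^2 z}{|z|^2}\right)-2(1+\bns)\log\tfrac{|z|}{\lambda}+C .
$$
Evaluating at $\lambda\simeq|z|^{1/2}$, or more simply comparing the point $z_n$ with a point at $|z|\sim 1/s_n$, together with the Harnack/gradient control near the bubble, gives
$$
W_n(z)\le -W_n(z_n)+C-2(1-\bns)\log|z| .
$$
Here the loss $4\bns\log|z|$ relative to the regular decay $-4\log|z|$ comes exactly from the weight growth. Undoing the scaling, with the shift $2(1+\bns)\log s_n$ in each case, yields \eqref{sup+inf-I}, \eqref{sup+inf-II} and \eqref{sup+inf-III} respectively. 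The constants depend only on $a$, $A$ and $\|K\|_\ii$.
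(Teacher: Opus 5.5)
There is a genuine gap; in fact there are two. First, your reduction is miscomputed. Since $W_n(z)=v_n(s_nz)+2(1+\alpha_{n,\sigma})\log s_n$, adding $4(1+\alpha_{n,\sigma})\log s_n$ to both sides shows that each of \eqref{sup+inf-I}--\eqref{sup+inf-III} is equivalent to $W_n(z_n)+\inf_{|z|\le 1/s_n}W_n\le C+4\log s_n$. So at $|z|\sim 1/s_n$ you need $W_n(z)\le -W_n(z_n)-4\log|z|+C$. This is the full regular decay rate, with no loss from the weight; the factor $2(1-\alpha_{n,\sigma})=4-2(1+\alpha_{n,\sigma})$ only appears when the scaling shift is undone. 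Your target $-2(1-\alpha_{n,\sigma})\log|z|$, rescaled back, gives $v_n(x_n)+\inf_{B_1}v_n\le C-4(1+\alpha_{n,\sigma})\log s_n+2(1-\alpha_{n,\sigma})\log s_n$. That is weaker than the theorem by $2(1+\alpha_{n,\sigma})|\log s_n|\to+\infty$. In Case (III) it reads $\inf_{B_1}v_n\le C-4\alpha_{n,\sigma}\log\delta_n$, which is vacuous, whereas the theorem asserts $\inf_{B_1}v_n\le C+2(1-\alpha_{n,\sigma})\log\delta_n$. The Kelvin exponent $2(1+\alpha_{n,\sigma})$ belongs to the same confusion: the paper reflects with the regular exponent $4$, which is the $+2t$ in \eqref{def:un}.

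Second, and more seriously, your moving-sphere step runs in the wrong direction and cannot be started. After the auxiliary correction the weight is increasing in $|z|$ (this is \eqref{monotone-V}, and it needs only $\alpha_{n,\sigma}\ge 0$). So outside $|z|=\lambda$ the reflected weight is smaller than the original one. For $\varphi=W_n^\lambda-W_n$ this gives $-\Delta\varphi\le c\,\varphi$ with $c>0$. That is the wrong sign for the maximum principle to propagate $W_n<W_n^\lambda$: a negative interior minimum of $\varphi$ is not excluded. What the monotone weight supports is the opposite comparison, with reflected inner values below outer values.

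Starting your comparison at $\lambda\sim1$ would also require $W_n\le W_n^\lambda$ on the whole annulus out to $|z|=1/s_n$, i.e. a global decay estimate, which is what you are trying to prove. The local limits \eqref{eqwn:IA2}, \eqref{eqwn:IB}, \eqref{equationgoodcase} give only compact-set information. Theorem \ref{profile-new} is unavailable, since it assumes $\lambda_\infty<4\pi/\sigma$. The oscillation bound \eqref{oscillation} makes the boundary values nearly constant but says nothing about their size relative to the reflected interior values, which is the sup$+$inf statement itself.

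The paper avoids all of this. It works with $u_n(t,\phi)=v_n(e^t\cos\phi,e^t\sin\phi)+2t-\frac{A}{a}e^t$ on $t\le0$ and reflects $t\mapsto2\nu-t$, starting at $\nu\to-\infty$. The start is trivial for each fixed $n$, since $u_n=2t+a_n+O_n(e^t)$. It keeps $u_n(2\nu-t,\phi)<u_n(t,\phi)$ up to the critical position $\mu_n$. There the strong maximum principle leaves only touching on $t=0$, which is \eqref{mp-comp}: $\min_{\partial B_1}v_n\le\max_{|x|=e^{2\mu_n}}v_n+4\mu_n+C$, with no boundary hypothesis needed. The blow-up profile is then used only to exhibit one violating pair of points along the ray through $x_n$. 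This gives $\mu_n\le\log(\text{bubble scale})+C$; in Case (III) it goes through the strict maximum in $t$ of $\widetilde w+2t$. Finally, the bound $v_n\le v_n(x_n)$ or the bubble profile at radius $e^{2\mu_n}$ concludes.
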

\begin{remark} We emphasize that the constants $C_{I},C_{II}$ and $C_{III}$ do not depend by the particular solution in the given case. Remark that \eqref{sup+inf-I} is sharp in the sense that if we take for example the profile \eqref{profilev:H} in Theorem
\ref{profile-new} above, then we find that,
$$
v_n(x_n)+\inf\limits_{B_1} v_n= O(1)-2(1+\bns)\log\epsilon_n+2(1-\bns+o(1))\log(\epsilon_n),
$$
where $o(1)$ is just the difference $\frac{\lm_n}{4\pi}-2$. The same property holds true about \eqref{sup+inf-II}.
On the other side, if along a subsequence we had $\bns=1$, then \eqref{sup+inf-III} would reduce to $\inf\limits_{B_1} v_n\leq C_{III}$,
which is a straightforward consequence of \eqref{lambdan}. Far from being a technical point, this fact reflects a subtle property
of the blow up phenomenon encoded by \eqref{eqbase}: as far as $\bns>1$,
it is not true in general that blow up implies concentration (\cite{det,llty,os}). See also Corollary \ref{cor:supinf}.
\end{remark}

\proof
Clearly, in view of \eqref{bounded} and \eqref{explosion}, we can assume that \eqref{blowup:hyp} holds true.
Remark that by assumption (Theorem \ref{massboundarycontrol}) we have $\sg\in\left(0,\tfrac12\right]$ and by \eqref{quantized:poslm} we have that
\begin{equation}\label{lm:bound}
\lm_\ii\leq \min\left\{\frac{4\pi}{\sg},\frac{8\pi}{1-2\sigma}\right\}\leq 16 \pi.
\end{equation}
We first discuss CASE (I).\\
Recall that, in view of $\tfrac{\epsilon_n}{t_n}\to+\infty$, \eqref{epsilondelta&epsilonxn} is satisfied
(that is $\frac{\epsilon_n}{\delta_n}\to+\infty$ and $\frac{\epsilon_n}{|x_n|}\to+\infty$)
and we define $B^{(n)}$ and $w_n$ as in \eqref{bn1}, \eqref{wn:IA}. Therefore $w_n$ is a solution of \eqref{eqwn:IA2} and $z=0$
is a blow up point for $w_n$ in $B_1$.

\begin{remark}\label{rem:reg} Clearly the concentration compactness theory (\cite{bm}, \cite{yy})
for regular Liouville type equations can be applied to $w_n$ on any compact subset of $\R^2$, implying that
the blow up set relative to $w_n$ is finite and the limiting total mass cannot exceed $\lm_\ii$.
Actually, for any compact set $U\subset \R^2$
far away from the blow up set of $w_n$, since $v_n$ satisfies \eqref{oscillation}, by Lemma 2.2 in \cite{bclt} we have that,
\begin{equation}\label{osc:wnU}
  \underset{U}\max\,w_n-\underset{U}\min\,w_n\leq C_U.
\end{equation}
Therefore, in particular
\begin{equation*}
  \underset{\p B_{R}}\max\,w_n-\underset{\p B_{R}}\min\,w_n\leq C_{R},
\end{equation*}
for any $R$ small enough and, by the result in \cite{yy}, $0$ is a simple blow up point for $w_n$,
$$
V_n e^{\dis w_n}\rightharpoonup 8\pi \dt_{p=0}\qquad\text{weakly in the sense of measures of}\,\, B_R,
$$
and
$$
\int\limits_{B_{R}(0)}V_n e^{\dis w_n}=8\pi +o(1),
$$
for any $R$ small enough.
\end{remark}

In view of a standard blow up argument (\cite{ls}),
putting $ \theta_{n}^{2(1+\al_{n,\sg})}=
\left(\tfrac{\epsilon_n}{\delta_{n}}\right)^{2(1+\al_{n,\sg})}\to +\ii$, $z_n=\tfrac{x_n}{\epsilon_n}$ 
and $8\gamma_{n}=V_{n}(\tfrac{x_n}{\epsilon_n})$, we also have that, for any $R\geq 1$,
\begin{equation}\label{yy-convI.0}
f_n(z):=\left|w_n(z)-\log\left(\dfrac{\theta_{n}^{2(1+\al_{n,\sg})}}
{\left(1+\gamma_{n}\theta_{n}^{2(1+\al_{n,\sg})}|z-z_n|^2\right)^2}\right) \right|,
\end{equation}
satisfies
\begin{equation}\label{yy-convI}
\max\left\{f_n(z)\,|\,|z-z_n|^2\leq R^2\theta_{n}^{-2(1+\al_{n,\sg})}\right\}\to 0.
\end{equation}

At this point we argue as in \cite{bls} and define,
$$ Q=\{(t,\phi):\;t\leq 0,\;\phi\in[0,2\pi]\},$$
\begin{equation}\label{def:un}
u_n(t,\phi) = v_n(e^{t}\cos{\phi},
e^{t}\sin{\phi})+2t-\frac{A}{a}e^{t},\;\;(t,\phi)\in Q.
\end{equation}
Thus $u_n$ satisfies:
\begin{equation}\label{eq:un}
-\D u_n= {W}_n (t,\phi) e^{\dis u_n}
+\frac{A}{a}e^{t}\;\;\mbox{in}\;\; Q,
\end{equation}
where
\begin{equation}\label{def:Wn}
 W_n (t,\phi)=
(\epsilon_n^2+e^{2t})^{\bns}e^{\frac{A}{a}e^{t}} \hat K_{n}(t,\phi),\quad \hat K_{n}(t,\phi):= K_n(e^{t}\cos{\phi},
e^{t}\sin{\phi}),
\end{equation}
with $a$ and $A$ defined respectively in \eqref{defa} and \eqref{gradientbound}. By using that $\left|\frac{\pa}{\pa t} \hat K_{n}(t,\phi)\right|\leq Ae^t$,
it is readily seen that
\begin{equation}\label{monotone-V}
\frac{\pa}{\pa t}\left( W_n (t,\phi)e^\xi+\frac{A}{a}e^{t}\right)\geq \frac{A}{a}e^{t}>0,\quad \forall \xi\in \R.
\end{equation}
Since for $n$ fixed we have,
\begin{equation}\nonumber
u_n(t,\phi) = 2t+a_n+\mbox{O}_n(e^{t}), \quad t\to -\ii,
\end{equation}
\begin{equation}\nonumber
\frac{\pa}{\pa t} u_n(t,\phi) = 2+\mbox{O}_n(e^{t}), \quad t\to -\ii,
\end{equation}
then, there exists $\mu<0$ depending by $n$ such that, for all $\nu\leq \mu$ we have
$$
u_n(2\nu-t,\phi)-u_n(t,\phi)=4(\nu-t)+c_n+\mbox{O}_n(e^{t})\leq \nu+c_n ,\quad \forall\,\frac{\nu}{2}\leq t\leq 0,\;\phi\in [0,2\pi],
$$
and
\begin{equation}\nonumber
\frac{\pa}{\pa t} u_n(t,\phi) = 2+\mbox{O}_n(e^{t})\geq 2-c_ne^{\frac{\nu}{2}}, \quad \forall\,  t\leq \frac{\nu}{2},\;\phi\in [0,2\pi].
\end{equation}
Therefore there exists $\mu$ depending by $n$ such that, for all $\nu\leq \mu$ we have
$$
u_n(2\nu-t,\phi)-u_n(t,\phi)<0,\quad \forall\,\nu<t<0,\;\phi\in [0,2\pi],
$$
and consequently is well defined
$$
\mu_n=\sup\{\mu<0\,:\,\forall \nu< \mu,\; u_n(2\nu-t,\phi)-u_n(t,\phi)<0,\quad\forall\,\nu<t<0,\;\phi\in [0,2\pi]\}.
$$

We claim that
\begin{equation}\label{mp-comp}
\min\limits_{\phi \in [0,2\pi)}u_n(0,\phi)\leq \max\limits_{\phi \in [0,2\pi)}u_n(2\mu_n,\phi).
\end{equation}

Let $\varphi_n(t,\theta)=u_n(2\mu_n-t,\phi)-u_n(t,\phi)$,
then, by using also \eqref{monotone-V}, we have $\varphi_n\leq 0$ and $\Delta \varphi_n\geq 0$ for $(t,\phi)\in [\mu_n,0]\times [0,2\pi)$. Since
 $\varphi_n(\mu_n,\phi)\equiv 0$ and in view of the maximality of $\mu_n$, by the strong maximum principle we must have $\varphi_n(0,\phi)=0$
 for some $\phi \in [0,2\pi)$, which implies \eqref{mp-comp}.\\
 At this point we observe that, in view of \eqref{yy-convI}, for any $R\geq 1$ we have
 \begin{equation}\label{yy-convI1}
 w_n(z_n+s_ny)=w_n(z_n)+\log\frac{1}{(1+\gamma_n|y|^2)^2}+o(1),\quad |y|\leq R,
 \end{equation}
 where $s_{n}:= \theta_n^{-(1+\bns)}\to0^+$ and, passing to complex notations, we set,
$$
 \phi_n=\arg(z_{n})=\arg(x_{n}), \qquad z_n=|z_{n}|e^{i\phi_n}.
$$

In \eqref{yy-convI1} we chose $|y_{n,1}|=R_1$ and $|y_{n,2}|=R_2$ for some $R_1\geq 2$ and $R_2\geq 0$ to be fixed later on such that,
$$
 \arg(y_{n,1})=\phi_n=\arg(y_{n,2}), \quad y_{n,1}=|y_{n,1}|e^{i\phi_n},\quad  y_{n,2}=|y_{n,2}|e^{i\phi_n}.
$$

At this point we observe that, for $i=1,2$,
\begin{align*}
 &w_n(z_n+s_ny_{n,i})=v_n(\epsilon_n(z_n+s_ny_{n,i}))+2(1+\bns)\log(\eps_n)=\\
 & u_n(\log(\eps_n)+\log(|z_n|+s_n|y_{n,i}|),\phi_n)-2\log(\eps_n)-2\log(|z_n|+s_n|y_{n,i}|)+\\
 &\frac{A}{a}\eps_n(|z_n|+s_n|y_{n,i}|)+2(1+\bns)\log(\eps_n),
 \end{align*}
and consequently, by using \eqref{yy-convI1},
\begin{align*}
    &u_n(\log(\eps_n)+\log(|z_n|+s_n|y_{n,i}|),\phi_n)-2\log(|z_n|+s_n|y_{n,i}|)+2 \log(1+\gamma_n R_i^2) \\
    =&u_n(\log(\eps_n)+\log(|z_n|),\phi_n)-2\log(|z_n|)+o(1).
\end{align*}

Therefore, we find that,
\begin{align*}
    &u_n(\log(\eps_n)+\log(|z_n|+s_n|y_{n,1}|),\phi_n)-2\log(|z_n|+s_n R_1)+2 \log(1+\gamma_n R_1^2)\\
    =&u_n(\log(\eps_n)+\log(|z_n|+s_n|y_{n,2}|),\phi_n)-2\log(|z_n|+s_n R_2)+2 \log(1+\gamma_n R_2^2)+o(1),
\end{align*}
that is
\begin{align*}
    &u_n(\log(\eps_n)+\log(|z_n|+s_n|y_{n,1}|),\phi_n)-u_n(\log(\eps_n)+\log(|z_n|+s_n|y_{n,2}|),\phi_n)\\
    =&2\log\frac{|z_n|+s_n R_1}{|z_n|+s_n R_2}+2 \log\frac{1+\gamma_n R_2^2}{1+\gamma_n R_1^2}+o(1).
\end{align*}

We split the discussion in two cases: the first where $s_n/|z_n|$ is bounded, and the second in which $s_n/|z_n|$ tends to infinity.\\
CASE (I-1): $\frac{s_n}{|z_n|}\leq L$, for some $L>0$.\\
In this case we choose $R_2=0$ and $R_1$ large enough depending on $L$ so that,
\begin{align*}
    &2\log\frac{|z_n|+s_n R_1}{|z_n|+s_n R_2}+2 \log\frac{1+\gamma_n R_2^2}{1+\gamma_n R_1^2}+o(1)\\
    =&2 \log\frac{1+\frac{s_n}{|z_n|}R_1}{1+\gamma_n R_1^2}+o(1)\leq  2 \log\frac{1+L R_1}{1+\gamma_n R_1^2}+o(1)<0.
\end{align*}
Therefore, setting $t_n=\log(\eps_n)+\log(|z_n|+s_n|y_{n,1}|)$ and
$$
\nu_n:=\log(\eps_n)+\log(|z_n|)+\frac12\log(1+\tfrac{s_n}{|z_n|}|y_{n,1}|),
$$
we have $\nu_n<t_n$ and
$$
u_n(t_n,\phi_n)<u_n(2\nu_n-t_n, \phi_n),
$$
which shows that,
\begin{align*}
\mu_n\leq \nu_n\leq \log(|x_n|)+\frac12 \log(1+\tfrac{s_n}{|z_n|}R_1)\leq \log(|x_n|)+ C_L,
\end{align*}
for a suitable large positive constant $C_L$. Therefore, since
$$
\frac{1}{L^2}\leq \left(\frac{|z_n|}{s_n}\right)^2=\frac{\eps_{n}^{2(1+\al_{n,\sg})}}{\dt_{n}^{2(1+\al_{n,\sg})}}\frac{|x_n|^2}{\eps_n^2}=
\left(\frac{\eps_n}{\dt_n}\right)^{2\bns}\left(\frac{|x_n|}{\dt_n}\right)^2,
$$
by \eqref{yy-convI.0} and \eqref{mp-comp}, for some sequence $\xi_n\in \R^2$, $|\xi_n|\leq e^{2C}$,  we find that,
\begin{align*}
        &\inf\limits_{\pa B_1} v_n - \tfrac{A}{a} = \min\limits_{\phi\in [0,2\pi]} u_n(0,\phi)\leq \max\limits_{\phi\in [0,2\pi]} u_n(2\mu_n,\phi) \\
    =&\max\limits_{\phi\in [0,2\pi]} v_n(e^{2\mu_n}\cos\phi,e^{2\mu_n}\sin\phi)+4\mu_n-\frac{A}{a}e^{2\mu_n}
\end{align*}
\begin{align*}
    \leq & v_n(|x_n|^2\xi_n)+4\log|x_n|+O(1)\leq \log\left(\dfrac{\dt_{n}^{-2(1+\al_{n,\sg})}|x_n|^4}
    {\left(1+\gamma_{n}\theta_{n}^{2(1+\al_{n,\sg})}|\frac{|x_n|^2}{\eps_n}\xi_n-\frac{x_n}{\eps_n}|^2\right)^2}\right)+O(1)  \\
    \leq  & \log\left(\dfrac{\dt_{n}^{-2(1+\al_{n,\sg})}|x_n|^4}
    {\left(\frac{\eps_{n}^{2(1+\al_{n,\sg})}}{\dt_{n}^{2(1+\al_{n,\sg})}}\frac{|x_n|^2}{\eps_n^2}|{|x_n|}\xi_n-\frac{x_n}{|x_n|}|^2\right)^2}\right)+O(1)=
    \log\left(\dfrac{\dt_{n}^{-2(1+\al_{n,\sg})}|x_n|^4}
    {\left(\frac{\eps_{n}^{2(1+\al_{n,\sg})}}{\dt_{n}^{2(1+\al_{n,\sg})}}\frac{|x_n|^2}{\eps_n^2}\right)^2}\right)+O(1)\\
    =&  \log\left(\dfrac{\dt_{n}^{-2(1+\al_{n,\sg})}|x_n|^4}
    {\left(\frac{\eps_{n}^{2\al_{n,\sg}}}{\dt_{n}^{2(1+\al_{n,\sg})}}{|x_n|^2}\right)^2}\right)+O(1)=
    \log\left(\frac{\dt_{n}^{2(1+\al_{n,\sg})}}{\eps_n^{4\bns}}\right)+O(1)  \\
    =&  2(1+\bns)\log\left(\frac{\dt_n}{\eps_n}\right)+2(1-\bns)\log\eps_n+O(1),
\end{align*}
which is \eqref{sup+inf-I}.\\

\noi CASE (I-2): $\frac{s_n}{|z_n|}\to +\ii$.\\
In this case we choose $R_2=1$ and $R_1=R$ for some $R\geq 2$ large enough so that,
\begin{align*}
    &2\log\frac{|z_n|+s_n R}{|z_n|+s_n}+2 \log\frac{1+\gamma_n }{1+\gamma_n R^2}+o(1)\\
    =&2\log{R}+2\log\frac{1+\gamma_n}{1+\gamma_n R^2}+o(1)\leq -2\log R +2 \log\frac{1+\gamma_n}{R^{-2}+\gamma_n }+o(1)<0.
\end{align*}
Therefore, setting $t_n=\log(\eps_n)+\log(|z_n|+s_n|y_{n,1}|)$ and
$$
\nu_n:=\log(\eps_n)+\frac12\log\left((|z_n|+{s_n}|y_{n,1}|)(|z_n|+{s_n}|y_{n,2}|)\right),
$$
since $|y_{n,2}|=1<2\leq R=|y_{n,1}|$ we have $\nu_n<t_n$ and
$$
u_n(t_n,\phi_n)<u_n(2\nu_n-t_n, \phi_n),
$$
which shows that, since $\frac{|z_n|}{s_n}\to 0$,
\begin{align*}
\mu_n\leq \nu_n\leq \log(\eps_n) +\log(s_n)+\frac12 \log\left(\tfrac{|z_n|}{s_n}+R\right)\left(\tfrac{|z_n|}{s_n}+1\right)\leq \log(\eps_n s_n)+ C_R,
\end{align*}
for a suitable large positive constant $C_R$. Therefore,
by \eqref{mp-comp} and \eqref{yy-convI.0}, for some sequence $\xi_n\in \R^2$, $|\xi_n|\leq e^{2C}$,  we find that,
\begin{align*}
    &  \inf\limits_{\pa B_1} v_n -\tfrac{A}{a} = \min\limits_{\phi\in [0,2\pi]} u_n(0,\phi)\leq \max\limits_{\phi\in [0,2\pi]} u_n(2\mu_n,\phi)  \\
    =&  \max\limits_{\phi\in [0,2\pi]} v_n(e^{2\mu_n}\cos\phi,e^{2\mu_n}\sin\phi)+4\mu_n-\frac{A}{a}e^{2\mu_n}
\end{align*}
\begin{align*}
    \leq  &   v_n(\eps_n^2 s_n^2\xi_n)+4\log(\eps_n s_n)+O(1)\leq \log\left(\dfrac{\dt_{n}^{-2(1+\al_{n,\sg})}(\eps_n s_n)^4}
    {\left(1+\gamma_{n}\theta_{n}^{2(1+\al_{n,\sg})}|\frac{(\eps_n s_n)^2}{\eps_n}\xi_n-z_n|^2\right)^2}\right)+O(1)  \\
    = & \log\left(\dfrac{\dt_{n}^{-2(1+\al_{n,\sg})}(\eps_n s_n)^4}
    {\left(1+\gamma_{n}|{\eps_n}s_n\xi_n-\frac{z_n}{s_n}|^2\right)^2}\right)+O(1)=
    \log\left(\dt_{n}^{-2(1+\al_{n,\sg})}(\eps_n s_n)^4\right)+O(1)  \\
    =&  \log\left({\dt_{n}^{-2(1+\al_{n,\sg})}\eps_n^{-4\alpha_{n,\sigma}} \dt_{n}^{4(1+\al_{n,\sg})}}\right)+O(1)\\
    =&  2(1+\bns)\log\left(\frac{\dt_n}{\eps_n}\right)+2(1-\bns)\log\eps_n+O(1),
\end{align*}
which is \eqref{sup+inf-I}. Thus, the discussion of Case I is concluded.\\

Next we discuss CASE (II).\\
Recall that in this case
$\delta_n\leq|x_n|$, for $n$ large enough, whence \eqref{epsilonxn} is satisfied,
\[
 \frac{|x_n|}{\delta_n}\to+\infty,\,\,\text{as}\,\,n\to+\infty,
\]
and, possibly along a further subsequence,

\[
 \frac{\epsilon_n}{|x_n|}\to\bar \epsilon_0\geq  0,\quad \bar z_n=\frac{x_n}{|x_n|}\to \bar z_0,\;\;|\bar z_0|=1,\quad \text{as}\,\,n\to+\infty.
\]
In this situation we define $D^{(n)}$ as in \eqref{dn1} and $\bar w_{n}$ as in \eqref{barwn:IB} which therefore is a solution of
\eqref{eqwn:IB}. Therefore $\bar z_0$ is a blow up point for $\bar w_n$. It is readily seen that Remark
\ref{rem:reg} applies to $\bar w_n$ as well, whence, in particular for any compact set $U\subset \R^2$
far away from the blow up set of $\bar w_n$, since $v_n$ satisfies \eqref{oscillation}, by Lemma 2.2 in \cite{bclt} we have that,
\begin{equation}\label{osc:barwnU}
  \underset{U}\max\,\bar w_n-\underset{U}\min\,\bar w_n\leq C_U,
\end{equation}
and in particular $\bar z_0$ is a simple blow up point and
$$
\int\limits_{B_{R}(\bar z_0)}\bar V_n e^{\dis \bar w_n}=8\pi +o(1),
$$
for any $R$ small enough.

In view of a standard blow up argument (\cite{ls}),
putting $\bar \theta_{n}^{2(1+\al_{n,\sg})}=
\left(\tfrac{|x_{n}|}{\delta_{n}}\right)^{2(1+\al_{n,\sg})}\to +\ii$ and $8\bar \gamma_{n}=\bar V_{n}(\tfrac{ x_{n}}{|x_n|})$, we also have that, for any $R\geq 1$,
\begin{equation}\label{yy-convII.0}
\bar f_n(z):=\left|\bar w_n(z)-\log\left(\dfrac{\bar \theta_{n}^{2(1+\al_{n,\sg})}}
{\left(1+\bar \gamma_{n}\bar\theta_{n}^{2(1+\al_{n,\sg})}|z- \tfrac{x_{n}}{|x_n|}|^2\right)^2}\right) \right|,
\end{equation}
satisfies
\begin{equation}\label{yy-convII}
\max\left\{\bar f_n(z)\,|\,|z-\tfrac{x_{n}}{|x_n|}|^2\leq R\bar \theta_{n}^{-2(1+\al_{n,\sg})}\right\}\to 0.
\end{equation}


At this point we argue as above and define:
$$ Q=\{(t,\phi):\;t\leq 0,\;\phi\in[0,2\pi]\},$$
and $u_n$ as in \eqref{def:un} which solves \eqref{eq:un}
where $W_n$ is defined in \eqref{def:Wn} and satisfies \eqref{monotone-V}. In particular the same argument adopted above
shows that it is well defined
$$
\mu_n=\sup\{\mu<0\,:\,\forall \nu< \mu,\; u_n(2\nu-t,\phi)-u_n(t,\phi)<0,\quad\forall\,\nu<t<0,\;\phi\in [0,2\pi]\},
$$
and that \eqref{mp-comp} holds true in this case as well.
At this point we observe that, in view of \eqref{yy-convII}, for any $R\geq 1$ we have
\begin{equation}\label{yy-convII1}
 \bar w_n(\bar z_n+\bar s_ny)=\bar w_n(\bar z_n)+\log\frac{1}{(1+\bar \gamma_n|y|^2)^2}+o(1),\quad |y|\leq R,
\end{equation}
where $\bar s_n:=(\bar\theta_n)^{-(1+\bns)}$ and, passing to complex notations, we set,
$$
\phi_n=\arg(\bar z_{n})=\arg(x_{n}), \qquad \bar z_n=|\bar z_{n}|e^{i\phi_n}=e^{i\phi_n}.
$$
Next, let us fix  $|\bar y_n|=R$ such that
$$
 \arg(\bar y_n)=\phi_n, \quad \bar y_n=|\bar y_n|e^{i\phi_n}.
 $$
 At this point we observe that
\begin{align*}
 &\bar w_n(\bar z_n+\bar s_ny_n)=v_n(|x_n|(\bar z_n+\bar s_n\bar y_n))+2(1+\bns)\log(|x_n|)\\
 =& u_n(\log(|x_n|)+\log(|\bar z_n|+\bar s_n|\bar y_n|),\phi_n)-2\log(|x_n|)-2\log(|\bar z_n|+\bar s_n|\bar y_n|)+\\
 &\frac{A}{a}|x_n|(|\bar z_n|+\bar s_n|\bar y_n|)+2(1+\bns)\log(|x_n|),
 \end{align*}
and consequently, recalling that $|\bar z_n|=1$, by using \eqref{yy-convII1},
\begin{align*}
    &u_n(\log(|x_n|)+\log(1+\bar s_n|\bar y_n|),\phi_n)-2\log(1+\bar s_n|\bar y_n|)\\
    =&u_n(\log(|x_n|)+\log(1),\phi_n)-2\log(1)-2 \log(1+\gamma_n R^2)+o(1),
\end{align*}
that is,
$$
u_n(\log(|x_n|)+\log(1+\bar s_n|\bar y_n|),\phi_n)= u_n(\log(|x_n|),\phi_n)+2\log\left(\frac{1+\bar s_nR}{1+\gamma_n R^2}\right)+o(1).
$$
Therefore, since $\bar s_n\to 0$, for any $R$ large enough,
setting $\bar t_n=\log(|x_n|)+\log(1+\bar s_n|\bar y_n|)$ and
$$
\nu_n:=\log(|x_n|)+\frac12\log(1+\bar s_n|y_n|),
$$
we have that,
$$
u_n(t_n,\phi_n)<u_n(2\nu_n-t_n, \phi_n),
$$
which shows that
$$
\mu_n\leq \nu_n=\log(|x_n|)+\frac12\log(1+\bar s_nR)\leq \log(|x_n|)+C,
$$
for a suitable positive constant $C$. Therefore,
by \eqref{mp-comp} and \eqref{yy-convII.0}, for some sequence $\xi_n\in \R^2$, $|\xi_n|\leq e^{2C}$,  we find that,
\begin{align*}
    &\inf\limits_{\pa B_1} v_n -\tfrac{A}{a}= \min\limits_{\phi\in [0,2\pi]} u_n(0,\phi)\leq \max\limits_{\phi\in [0,2\pi]} u_n(2\mu_n,\phi)\\
    =&\max\limits_{\phi\in [0,2\pi]} v_n(e^{2\mu_n}\cos\phi,e^{2\mu_n}\sin\phi)+4\mu_n-\frac{A}{a}e^{2\mu_n}
\end{align*}
\begin{align*}
    \leq & v_n(|x_n|^2\xi_n)+4\log|x_n|+O(1)\leq \log\left(\dfrac{\dt_{n}^{-2(1+\al_{n,\sg})}|x_n|^4}
    {\left(1+\bar \gamma_{n}\bar \theta_{n}^{2(1+\al_{n,\sg})}|\frac{|x_n|^2}{|x_n|}\xi_n-\frac{x_n}{|x_n|}|^2\right)^2}\right)+O(1)  \\
    \leq  &\log\left(\dfrac{\dt_{n}^{-2(1+\al_{n,\sg})}|x_n|^4}
    {\left(\frac{|x_n|^{2(1+\al_{n,\sg})}}{\dt_{n}^{2(1+\al_{n,\sg})}}\right)^2}\right)+O(1)=
    \log\left(\frac{\dt_{n}^{2(1+\al_{n,\sg})}}{|x_n|^{4\bns}}\right)+O(1)\\
    =&  2(1+\bns)\log\left(\frac{\dt_n}{|x_n|}\right)+2(1-\bns)\log|x_n|+O(1),
\end{align*}
which is \eqref{sup+inf-II}.
This fact concludes the discussion of CASE (II).

\bigskip

Next we discuss CASE (III). Recall that in this case, possibly along a subsequence, we assume without loss of generality that,
\begin{equation*}
 \frac{x_n}{\delta_n}\to y_0, \,\,\text{as}\,\,n\to+\infty
\end{equation*}
 and
\begin{equation*}
 \frac{\epsilon_n}{\delta_n}\to \epsilon_0, \,\,\text{as}\,\,n\to+\infty,
\end{equation*}
for some $y_0\in\R^2$ and $\epsilon_0\geq 0$. Also, we define $D_{n}$ as in \eqref{dn2} and $\widetilde w_{n}$ as in \eqref{phicaseIA} which therefore is a solution of \eqref{equationgoodcase}.\\
At this point we argue as above and define:
$$ Q=\{(t,\phi):\;t\leq 0,\;\phi\in[0,2\pi]\},$$
and $u_n$ as in \eqref{def:un} which solves \eqref{eq:un}
where $W_n$ is defined in \eqref{def:Wn} and satisfies \eqref{monotone-V}. In particular the same argument adopted above
shows that it is well defined
$$
\mu_n=\sup\{\mu<0\,:\,\forall \nu< \mu,\; u_n(2\nu-t,\phi)-u_n(t,\phi)<0,\quad\forall\,\nu<t<0,\;\phi\in [0,2\pi]\},
$$
and that \eqref{mp-comp} holds true in this case as well.\\
Let us define
\begin{align*}
\widetilde u_n(t,\phi):&=u_n(t,\phi)+2\bns\log \delta_n \\
&=v_n(e^t\cos \phi , e^t\sin \phi)+2t-\tfrac{A}{a}e^t+2\bns\log\delta_n
\end{align*}
and
\[
u(t,\phi):=\widetilde w(e^t\cos\phi,e^t\sin \phi)+2t.
\]
Firstly, we notice that
\begin{equation}
    \label{u:boundabove}
    u(t,\phi)=\widetilde w(e^t\cos\phi,e^t\sin \phi)+2t\leq \widetilde w(y_0)+2t=2t,\quad\;\;\forall(t,\phi)\in Q.
\end{equation}
where we have used the fact that $\widetilde w(y)\leq\widetilde w(y_0)=0$, for every $y\in\R^2$.
Moreover, by using the fact that $\widetilde w_n\to\widetilde w$ uniformly on the compact sets of $\R^2$, we notice that
\begin{align*}
    \widetilde u_n(&t+\log\delta_n,\phi)-u(t,\phi)=\\
    &=v_n(\delta_ne^t\cos\phi , \delta_ne^t\sin\phi)+2(1+\bns)\log\delta_n-\tfrac{A}{a}\delta_ne^t-\widetilde w(e^t\cos\phi,e^t\sin\phi)\\
    &=\widetilde w_n (e^t\cos\phi,e^t\sin\phi)-\tfrac{A}{a}\delta_ne^t-\widetilde w(e^t\cos\phi,e^t\sin\phi)=o(1),
\end{align*}
locally uniformly as $n\to\infty$. Therefore, for every $s\in\R$ fixed,
\begin{equation}
\label{localuniformconvergence}
\sup_{\{t\leq s,\phi\in[0,2\pi]\}} |\widetilde u_n(t+\log\delta_n,\phi)-u(t,\phi)|\to 0
\end{equation}

\medskip

Now, we prove that there exists $\widetilde t_0\in\R$, such that
\begin{equation}
\label{strictmaximum}
u(t,\phi)<u(\widetilde t_0,\phi) \;\quad \forall  t\neq\tilde t_0.
\end{equation}
If $\alpha_\ii=1$ and $\widetilde\beta=16\pi$ (see \eqref{equationgoodcase}), then by Theorem 2.1 in \cite{llty} (see also section \ref{sec:planar}) 
we have necessarily that $\epsilon_0=0$, which implies that $\widetilde w$ solves the following planar problem,
\begin{align*}
\graf{-\D \widetilde w= c_0|x|^2e^{\widetilde w} &\text{in}\,\,\R^2,\\
c_0\int_{\R^2}|x|^2e^{\widetilde w} =16\pi,\\
\widetilde w (y)\leq\widetilde w(y_0)=0 &\forall \,y\in\R^2. }
\end{align*}
By the result in \cite{pt}, we know the explicit expression of $\widetilde w$, which is, using complex notation,
\[
\widetilde w(y)=\log\left(\frac{\lm_0}{(1+\lm_0\gamma_1|y^{2}-a_0|^2)^2}\right),\qquad y\in\mathbb C,
\]
with $\gamma_1=(16c_0)^{-1}$, $\lm_0>0$ and $a_0\in\mathbb{C}$ satisfying $\lm_0=(1+\lm_0\gamma_1|y_0^{2}-a_0|^2)^2$. By using the explicit formula of $\widetilde w$ in $\log$-coordinates, we prove that $u$ has a strict maximum in the variable $t$ in $\R$. Indeed,
\[
u(t,\phi)=\log\left(\frac{\lm_0e^{2t}}{(1+\lm_0\gamma_1|e^{2t+2\phi i}-a_0|^2)^2}\right),\qquad (t,\phi)\in \R\times[0,2\pi)
\]
and
\[
\p_t u(t,\phi)=2\left(1-\frac{\lm_0\gamma_1(4e^{4t}-4|a_0|\cos(\theta-2\phi)e^{2t})}{1+\lm_0\gamma_1|e^{2t+2\phi i}-a_0|^2}\right),
\]
with $a_0=|a_0|e^{i\theta}$, for a certain $\theta\in[0,2\pi)$. Hence,  $\p_t u(t,\phi)=0$ if and only if
\[
\widetilde t_0=\tfrac{1}{2}\log\left(\tfrac 13 |a_0|\cos(\theta-2\phi)+\tfrac{1}{3}\sqrt{|a_0|^2\cos^2(\theta-2\phi)+3(|a_0|^2+(\lm_0\gamma_1)^{-1})}\right)
\]
and $u(t,\phi)\to-\infty$, as $t\to\pm\infty$, which implies that $u(\widetilde t_0,\phi)>u(t,\phi)$, for every $(t,\phi)\in \R\times[0,2\pi)$.

\bigskip

Let us discuss the case: $\bis<1$ or $\widetilde\beta<16\pi$.\\ 
We first prove that $u$ is a radially symmetric function. It is convenient to write $u$ in Euclidean coordinates, that is, 
for $y=(e^t\cos\phi,e^t\sin\phi)$, $(t,\phi)\in \R\times[0,2\pi)$, with an abuse of notation,
\[
u(y)=\widetilde w(y)+2\log|y|, \qquad|y|>0.
\]
It is well known (\cite{GM1}, \cite{Lin1}) that $\widetilde w$ is the unique, radially symmetric and nondegenerate solution of \eqref{profile:tildew}. 
Therefore $u$ is radially symmetric, i.e. $u(y)=u(|y|)$.
At this point, it is easy to prove that $u$ admits at least a local maximum point in $\{|y|>0\}$.
Indeed, we notice that $\widetilde w$ is well defined and continuous in $\bar B_1$ and $u(y)\to-\infty$, when $|y|\to0$, 
which implies that for a fixed $m>0$ there exists $R_{m}$ such that $u(y)\leq-m$, for every $|y|\leq R_{m}$. 
Moreover, we rely on the result in \cite{cl2}, which states that for $|x|$ sufficiently large, we have that
\[
\widetilde w(x)=-\tfrac{\widetilde\beta}{2\pi}\log|x|+O(1),
\]
with $\widetilde\beta>8\pi$ (see section \ref{sec:planar}). Therefore, $\lim_{|y|\to+\infty}u(y)=-\infty$. 
In other words, there exists $M>0$ and $R_M>0$ such that $u(y)\leq-M$ for every $|y|\geq R_M$.
At last, $u$ is continuous in $\{R_m\leq|y|\leq R_M\}$, then  attains its maximum inside this domain. 
Hence there exists $\widetilde y_0\in\R^2$  such that $u(\tilde y_0)\geq u(y)$, for any $y\in\R^2$.\\
Now, passing to polar coordinates and letting $r_0:=|\widetilde y_0|$, we want to prove that for any $r>0$, $r\neq r_0$, $$u(r)<u(r_0).$$

By the radial symmetry of $u$, we deduce that
\[
ru'(r)=r\widetilde w'(r)+2.
\]
Therefore,
\begin{equation}
\label{relationmaxima}
u'(r)+ru''(r)=(ru'(r))'=(r\widetilde w'(r))'=-rc_0(\epsilon_0^2+r^2)e^{\widetilde w}<0,\qquad \forall r>0.
\end{equation}
In particular, we deduce that $u''(r_0)<0$, which implies that it is a strict local maximum point. At this point, assume by contradiction there exists another local maximum point $r_1>0$, $ r_1\neq r_0$ such that $u(r_0)=u(r_1)$. For simplicity, we choose $ r_1> r_0$. Then, by continuity of $u'$, there has to exist a point $r_2>0$, $r_0< r_2< r_1$ for which $u$ attains a local minimum, namely $u'(r_2)=0$ and $u''(r_2)\geq0$. However, by \eqref{relationmaxima}, we deduce that
\[
0\leq r_2u''(r_2)<0,
\]
which is a contradiction. Therefore, $\widetilde t_0:=\log r_0$ is a strict global point of maximum for $u(\cdot,\phi)$ in $\R$.

\bigskip

Then, by using \eqref{localuniformconvergence} and\eqref{strictmaximum}, we deduce that, for $n$ large enough,
\begin{equation}
    \label{localuniformconvergence2}
\sup_{\{t\leq 2+\widetilde t_0,\phi\in[0,2\pi)\}}|\widetilde u_n(t+\log\delta_n,\phi)-u(t,\phi)|<C
\end{equation}
for some $C>0$ and
\[
\widetilde u_n(\widetilde t_0+\log\delta_n,\phi)>\widetilde u_n( t+\log\delta_n,\phi), \qquad \forall t\leq 2+ \widetilde t_0, t\neq \widetilde t_0,
\]
which, by definition, implies that
\[
u_n(\widetilde t_0+\log\delta_n,\phi)> u_n( t+\log\delta_n,\phi), \qquad \forall t\leq 2+ \widetilde t_0, t\neq \widetilde t_0.
\]
In particular,
\[
u_n(\widetilde t_0+\log\delta_n,\phi)> u_n( \widetilde t_0+2+\log\delta_n,\phi),
\]
Hence, by choosing $\lm_n=\widetilde t_0+1+\log\delta_n$ and $t_n=\widetilde t_0+2+\log\delta_n$, we have that
\[
u_n(2\lm_n-t_n,\phi)-u_n(t_n,\phi)>0,
\]
which implies, by definition of $\mu_n$, that
\begin{equation}
    \label{mun:bound}
    \mu_n\leq \lm_n=\log\delta_n+\widetilde t_0+1.
\end{equation}
Hence,
\begin{align*}
    \inf_{B_1}\,v_n=\min_{\p B_1}\,v_n&=\min_{\phi\in[0,2\pi)}\,u_n(0,\phi)+\frac{A}{a}\\
    & \leq \max_{\phi\in[0,2\pi)}\,u_n(2\mu_n,\phi)+\frac{A}{a} \qquad &\text{(by \eqref{mp-comp})}\\
    & = \max_{\phi\in[0,2\pi)}\,\widetilde u_n(2\mu_n,\phi)-2\bns\log\delta_n+\frac{A}{a}\\
    &\leq \max_{\phi\in[0,2\pi)}\,u(2\mu_n-\log\delta_n,\phi)-2\bns\log\delta_n+O(1) \qquad &\text{(by \eqref{localuniformconvergence2})}\\
    &\leq 4\mu_n-2(1+\bns)\log\delta_n+O(1) \qquad &\text{(by \eqref{u:boundabove})}\\
    &\leq 4\widetilde t_0+4+4\log\delta_n-2(1+\bns)\log\delta_n+O(1) \qquad &\text{(by \eqref{mun:bound})}\\
    &=2(1-\bns)\log\delta_n+O(1)
\end{align*}
At last, we obtain that
\begin{align*}
   v_n(x_n)+ \inf_{B_1}\,v_n\leq -2(1+\bns)\log\delta_n+2(1-\bns)\log\delta_n+O(1),
\end{align*}
which is the desired conclusion.
\finedim

\bigskip
\bigskip
At this point, in view of Theorem \ref{supinfI}, we can prove Corollary \ref{cor:supinf}.\\
{\it The Proof of Corollary \ref{cor:supinf}.}\\
If, possibly along a subsequence, $\bns=1$, then the inequality easily follows since \eqref{lambdan} holds and consequently 
$\inf_{B_1} v_n\leq C$. Hence, we can assume without loss of generality that for any $n$,
$\bns<1$. If $v_n$ were uniformly bounded in a neighborhood of $x=0$, the conclusion would immediately follow from the ``Sup+Inf" inequality in \cite{bls}.
Thus, there is no loss of generality in assuming \eqref{blowup:hyp} and then we deduce from Theorem \ref{Concentration-Compactness alternative} that
\eqref{bounded} and \eqref{explosion} hold true. At this point, by Theorem \ref{Concentration}, we have $\sg\leq \frac12$ and in particular
all the assumption of Theorem \ref{massboundarycontrol} are satisfied. Actually, from \eqref{blowup:hyp} and for any $n$ large enough we have
that $\sup_{B_\frac12} v_n=v_n(x_n)$ and we can apply Theorem \ref{supinfI}.\\
Assume first that \eqref{sup+inf-I} holds true, then we have,
$$
\inf\limits_{B_1}v_n\leq C_I+\log\left(\frac{\dt^{2(1+\bns)}_n}{\eps^{4\bns}_n}\right),
$$
and consequently
\begin{align*}
    \tfrac{1+\bns}{1-\bns}\inf\limits_{B_1}v_n\leq & \tfrac{1+\bns}{1-\bns}C_I+\tfrac{1+\bns}{1-\bns}\log(\tfrac{\dt^{2(1+\bns)}_n}{\eps^{4\bns}_n})\\
    =&  -v_n(x_n)+\tfrac{1+\bns}{1-\bns}C_I+\tfrac{2\bns}{1-\bns}\log({\dt^{2(1+\bns)}_n})+\tfrac{1+\bns}{1-\bns}\log(\tfrac{1}{\eps^{4\bns}_n}) \\
    =& -v_n(x_n)+\tfrac{1+\bns}{1-\bns}C_I+\tfrac{1+\bns}{1-\bns}\log({\dt^{4\bns}_n})+\tfrac{1+\bns}{1-\bns}\log(\tfrac{1}{\eps^{4\bns}_n}) \\
    = & -v_n(x_n)+\tfrac{1+\bns}{1-\bns}C_I+4\bns\tfrac{1+\bns}{1-\bns}\log(\tfrac{\dt_n}{\eps_n}),
\end{align*}
and the conclusion follows since $\frac{\dt_n}{\eps_n}\to 0^+$.\\

If \eqref{sup+inf-II} holds true, we can proceed similarly as in Case I, with $\epsilon_n$ replaced by $|x_n|$. This yields the desired estimate,
\[
\tfrac{1+\bns}{1-\bns}\inf\limits_{B_1}v_n\leq -v_n(x_n)+\tfrac{1+\bns}{1-\bns}C_{II}+4\bns\tfrac{1+\bns}{1-\bns}\log(\tfrac{\dt_n}{|x_n|}),
\]
and the conclusion follows by noting that $\frac{\dt_n}{|x_n|}\to 0^+$.

If \eqref{sup+inf-III} holds true we have,

$$
\inf\limits_{B_1}v_n\leq C_{III}+2(1-\bns)\log\dt_n,
$$
and consequently
\[
\tfrac{1+\bns}{1-\bns}\inf\limits_{B_1}v_n\leq \tfrac{1+\bns}{1-\bns}C_{III}+2(1+\bns)\log\dt_n= \tfrac{1+\bns}{1-\bns}C_{III}-v_n(x_n),
\]
and the conclusion immediately follows, in this case as well.\finedim

\bigskip
\bigskip

We need scaled versions of \eqref{sup+inf-I}, \eqref{sup+inf-II} and \eqref{sup+inf-III}.
\begin{corollary}\label{cor:scaled} Under the assumptions of Theorem \ref{supinfI},\\
if the condition of CASE (I) in Theorem \ref{profile-new} is satisfied, that is $\tfrac{\epsilon_n}{t_n}\to+\infty$, then we have
\begin{equation}\label{scaled}
\inf\limits_{|x|=s}v_n\leq C_I+2(1+\bns)\log\left(\frac{\dt_n}{\eps_n}\right)-2(1+\bns)\log s+2(1-\bns)\log \left(\frac{\eps_n}{s}\right),
\end{equation}
for any $s\in [R_n\eps_n,1]$, for any $R_n\to +\ii$ such that $R_n\eps_n\to 0$.\\
If the condition of CASE (II) in Theorem \ref{profile-new} is satisfied, that is,
there exists a subsequence such that $\tfrac{\epsilon_n}{t_n}\leq C$ and $\tfrac{|x_n|}{\delta_n}\to+\infty$, then we have
\begin{equation}\label{scaledII}
\inf\limits_{|x|=s}v_n\leq C_{II}+2(1+\bns)\log\left(\frac{\dt_n}{|x_n|}\right)-2(1+\bns)\log s+2(1-\bns)\log \left(\frac{|x_n|}{s}\right),
\end{equation}
for any $s\in [R_n|x_n|,1]$, for any $R_n\to +\ii$ such that $R_n|x_n|\to 0$.

If the condition of CASE (III) in Theorem \ref{profile-new} is satisfied,
that is, there exists a subsequence such that $\tfrac{\epsilon_n}{t_n}\leq C$ and $\tfrac{|x_n|}{\delta_n}\leq C$, then we have,
\begin{equation}\label{scaledIII}
\inf\limits_{|x|=s}v_n\leq C_{III}-2(1+\bns)\log s+2(1-\bns)\log \left(\frac{\delta_n}{s}\right),
\end{equation}
for any $s\in [R_n\delta_n,1]$, for any $R_n\to +\ii$ such that $R_n\delta_n\to 0$.

\end{corollary}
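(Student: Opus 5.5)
The plan is to apply Theorem \ref{supinfI} to the rescaled functions
$$
v_{n,s}(x):=v_n(sx)+2(1+\bns)\log s,\qquad x\in B_{2/s},
$$
for $s\in[R_n\eps_n,1]$ (respectively $[R_n|x_n|,1]$, $[R_n\dt_n,1]$). Then \eqref{scaled}, \eqref{scaledII} and \eqref{scaledIII} follow by reading off the conclusion on $|x|=1$, i.e.\ on $|x|=s$ for $v_n$.

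\textbf{Checking the hypotheses.} A direct computation gives
$$
-\D v_{n,s}=(\eps_n^2/s^2+|x|^2)^{\bns}K_n(sx)e^{v_{n,s}}
$$
in $B_2$. This is the same class of equations with $\eps_n$ replaced by $\eps_n/s$ and $K_n$ by $K_n(s\,\cdot)$, and the latter satisfies \eqref{defa}, \eqref{gradientbound} with the same $a$ and an $A$ that can only improve since $s\le1$. The mass $\lm_n$ is scale invariant, so $\bns$ is unchanged. The maximum point becomes $x_n/s$ and $\dt_n$ becomes $\dt_n/s$, since
$$
v_{n,s}(x_n/s)=v_n(x_n)+2(1+\bns)\log s=-2(1+\bns)\log(\dt_n/s).
$$
The ratios $\eps_n/t_n$ and $|x_n|/\dt_n$ are unchanged, so each case is preserved. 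The role of $R_n\to\ii$ is to guarantee $\eps_n/s\to0$ (resp.\ $|x_n|/s\to0$, $\dt_n/s\to0$) uniformly in $s$. This keeps the rescaled data in the blow-up regime, with $x_n/s\to0$ and the maximum blowing up. The condition $R_n\eps_n\to0$ is what makes the lower endpoint small, so the interval is genuinely nontrivial.

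\textbf{The real issue: uniformity of the constant.} The constants $C_I,C_{II},C_{III}$ are claimed to depend only on $a,A,\|K\|_\ii$. However, the proof of Theorem \ref{supinfI} used the profile convergence \eqref{yy-convI}, \eqref{yy-convII} and \eqref{localuniformconvergence}, which are asymptotic statements. Rather than re-verify the hypotheses (boundary oscillation, \eqref{bound}) for every rescaling, I would rerun the moving-plane argument directly for $v_n$ with the reflection surface $t=\log s$ in place of $t=0$. Define $\mu_n(s)$ as before, with $t<\log s$. The maximum-principle argument giving \eqref{mp-comp} is unchanged because \eqref{monotone-V} holds on all of $Q$, and it yields
$$
\min_{|x|=s}u_n\le\max u_n(2\mu_n(s)-\log s,\cdot).
$$
The upper bound on $\mu_n(s)$ comes from the same comparison points near the bubble, which lie at scale $|x_n|$, $\eps_ns_n$ or $\dt_n$. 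These are all $\ll s$ by the choice of $R_n$, so for instance $\mu_n(s)\le\log|x_n|+C$, independently of $s$.

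\textbf{Closing the estimate.} The reflected point has radius $e^{2\mu_n(s)-\log s}$, which is at most a constant times $|x_n|^2/s$. I would evaluate the bubble profile there exactly as in the displayed chains of the proof of Theorem \ref{supinfI}, with $|x_n|^4$ replaced by $(|x_n|^2/s)^2$ and with the extra $-2\log s$ coming from $u_n=v_n+2t$. Collecting the powers of $s$ produces exactly the terms $-2(1+\bns)\log s+2(1-\bns)\log(\eps_n/s)$, and analogously in the other two cases. The main obstacle is making sure the bubble expansion remains valid at that reflected point, i.e.\ that it lies in the region where \eqref{yy-convI.0} (or the upper bound $u\le 2t$ in Case (III)) controls $v_n$. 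Since the reflected radius only decreases as $s$ grows, this holds uniformly, and the constant is the same $C_I$ (resp.\ $C_{II}$, $C_{III}$).
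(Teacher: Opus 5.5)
Your argument is correct. Its opening is exactly the paper's proof: the paper sets $\hat v_n(y)=v_n(sy)+2(1+\bns)\log s$ and checks three things. First, $\hat\eps_n=\eps_n/s\le 1/R_n\to 0$. Second, $\hat v_n(x_n/s)\to+\ii$. Third, $\hat\eps_n/|x_n/s|=\eps_n/|x_n|$ and $\hat\eps_n/\hat\dt_n=\eps_n/\dt_n$, so the rescaled sequence is in the same case. It then applies Theorem \ref{supinfI} as a black box and scales back. You instead rerun the moving-plane proof on $v_n$ with the reflection line at $t=\log s$.

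This is the same computation in disguise. If $\hat u_n$ is built from $\hat v_n$ via \eqref{def:un} with $A$ replaced by $sA$, then $\hat u_n(t,\phi)=u_n(t+\log s,\phi)+2\bns\log s$. The blow-up profiles are scale invariant as well: $\hat v_n(\hat\eps_n z)+2(1+\bns)\log\hat\eps_n=w_n(z)$, and likewise $\bar w_n$ and $\widetilde w_n$ are unchanged.

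What your version buys is that it makes explicit what the black-box citation leaves implicit. The paper does not verify for $\hat v_n$ the remaining hypotheses of Theorem \ref{supinfI}:
\begin{itemize}
\item bounded oscillation on $\pa B_s$;
\item the analogue of \eqref{bound} on $B_{2s}\setminus B_{rs}$, which is essentially an estimate like \eqref{estimate:vn+log-firstcase}, proved only later and using $\lm_\ii<16\pi$;
\item \eqref{lambdan} with the same exponent $\bns$, although the mass of $\hat v_n$ on $B_2$ is $\int_{B_{2s}}H_ne^{v_n}$.
\end{itemize}
Nor does it explain why $C_I$ is uniform in $s$.

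Your rerun shows that only the profile information of the original sequence enters. It is used at comparison points of scale $|x_n|$, $\eps_n s_n$ or $\dt_n$, all $\ll s$. It is also used at reflected points of radius at most $C|x_n|^2/s=o(|x_n|)$ on the whole range of $s$ in Cases (I-1) and (II), and $C(\eps_n s_n)^2/s$ in Case (I-2). In Case (III) the shifted variable is at most $2\widetilde t_0+2-\log R_n$, inside the range of \eqref{localuniformconvergence2}. So uniformity in $s$ is automatic, and your count giving a total of $-4\log s$ is right.

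Your remark that the reflected radius ``decreases as $s$ grows'' should be read in this sense: what matters is the upper bound, and its worst case $s=R_n\eps_n$ (resp.\ $R_n|x_n|$, $R_n\dt_n$) is still admissible.

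The paper's route is shorter. Both routes rely equally on the global Li-type upper bound for $w_n$ and $\bar w_n$ at distance of order $|x_n|$ from $x_n$. The displayed chains in the proof of Theorem \ref{supinfI} already use this bound beyond the small balls of \eqref{yy-convI} and \eqref{yy-convII}.
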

\proof
The scaling argument needed in the three cases (I), (II) and (III) is the same, so we just prove (I).\\
Let us fix any $s\in[R_n \eps_n,1]$ and define
\[
 \hat v_n(y)=v_n(sy)+2(1+\bns)\log s,\quad y\in B_1.
\]
Clearly, putting
$$
\hat \eps_n =\frac{\epsilon_n}{s},
$$
we have that $\hat v_n$ satisfies
\[
 -\D \hat v_n(y)=\left({\hat \eps_n^2}+|y|^2\right)^{\bns}K_n(sy)e^{\dis \hat v_{n}(y)} \quad\text{in}\quad B_1,
\]
where
we also have,
$$
{\hat \eps_n}\leq\tfrac{1}{R_n}\to 0.
$$
Remark that, since by assumption we are in CASE (I), \eqref{epsilondelta&epsilonxn} holds true and then, putting
$$
y_n=\frac{x_n}{s},
$$
we have $|y_n|\to 0$,
$$
\sup\limits_{B_{\frac12}} \hat v_n = \hat v_n(y_n)=v_n(x_n)+2(1+\bns)\log s\geq v_n(x_n)+2(1+\bns)\log (R_n\eps_n)\to +\ii,
$$
and in particular,
$$
\hat v_n(y_n)+2(1+\bns)\log (\hat \eps_n)=
v_n(x_n)+2(1+\bns)\log (\eps_n)\to +\ii.
$$

Remark that
$$
\frac{\hat \eps_n}{|y_n|}=
\frac{\eps_n}{|x_n|}\to +\ii
$$
whence we have that the sequence $\hat v_n$ satisfies,
$$
\frac{\hat \eps_n}{\hat t_n}\to +\ii,
$$
where
$$
\hat t_n=\max\{\hat \delta_n,|y_n|\}, \quad
\hat \dt_n^{2(1+\bis)}=e^{-\hat v_n(y_n)}\to 0.
$$

Therefore, we can apply Theorem \ref{supinfI} to deduce that
$$
\inf\limits_{\pa B_s} v_n +2(1+\bns)\log s= \inf\limits_{\pa B_1} \hat v_n \leq -\hat v_n(y_n)-2(1+\bns)\log(\hat \eps_n)+2(1-\bns)\log(\hat \eps_n)+C_I=
$$
$$
-v_n(x_n)-2(1+\bns)\log s-2(1+\bns)\log(\eps_n)+2(1+\bns)\log s+2(1-\bns)\log(\hat \eps_n)+C_I,
$$
which is \eqref{scaled}.
\finedim

\bigskip
\bigskip

In view of Corollary \ref{cor:scaled} we have the following refinement of Theorem \ref{massboundarycontrol}.

\begin{theorem}\label{quant:imp}
Under the assumptions of Theorem \ref{supinfI}, suppose in addition that
$$
\lm_n\leq\frac{4\pi}{\sg}\quad\text{and}\quad\lm_\ii=\frac{4\pi}{\sg}.
$$
If the condition of CASE (I) in Theorem \ref{profile-new} is satisfied, that is $\tfrac{\epsilon_n}{t_n}\to+\infty$, then
$$\text{either }\quad \sg=\frac14\,\,\,\text{and}\,\,\, \lm_\ii=16\pi\qquad \text{or}\qquad\sg=\frac12\,\,\, \text{and}\,\,\,\lm_\ii=8\pi.$$
If the condition of CASE (II) in Theorem \ref{profile-new} is satisfied, that is,
there exists a subsequence such that $\tfrac{\epsilon_n}{t_n}\to\bar\epsilon_0\geq 0$ and $\tfrac{|x_n|}{\delta_n}\to+\infty$, then we have\\
 $$\text{either }\,\, \sg=\frac14\,\,\,
 \text{and}\,\,\,\lm_\ii=16\pi\quad \text{or}\,\,\,\,\sg=\frac{1}{2}\left(1-{\frac{1}{1+\sqrt{1+\bar\epsilon_0^2}}}\right)\,\,\,
 \text{and} \,\,\,\lm_\ii=8\pi\left(1+\frac{1}{\sqrt{1+\bar\epsilon_0^2}}\right),$$
 for some $\bar\epsilon_0>0$.
\end{theorem}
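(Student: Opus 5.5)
The plan is to combine the Pohozaev identity on $B_r$ with the scaled Harnack inequalities of Corollary \ref{cor:scaled}. The aim is an algebraic relation between $\lm_\ii$ and the weighted mass
$$
L:=\lim_{n\to+\ii}\int_{B_r}\frac{|x|^2}{\eps_n^2+|x|^2}\,H_ne^{v_n},
$$
where $r>0$ is small and fixed.

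\textbf{Step 1: the Pohozaev relation.} Multiplying $-\D v_n=H_ne^{v_n}$ by $x\cdot\nabla v_n$ and integrating over $B_r$ gives
$$
\int_{B_r}\Big(2+2\bns\frac{|x|^2}{\eps_n^2+|x|^2}\Big)H_ne^{v_n}+\int_{B_r}h_n^{2\bns}(x\cdot\nabla K_n)e^{v_n}=\text{boundary terms on }\pa B_r .
$$
By Theorem \ref{Concentration} and Theorem \ref{massboundarycontrol} (concentration with $\beta=\lm_\ii$, together with \eqref{oscillation}), $v_n-\fint_{\pa B_r}v_n$ converges in $C^1$ near $\pa B_r$ to $\frac{\lm_\ii}{2\pi}\log\frac1{|x|}+$ a harmonic function. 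Hence the boundary terms tend to $\frac{\lm_\ii^2}{4\pi}+O(r)$. The $\nabla K_n$ term is $O(r)$. Using $\bis=1$ and letting $n\to\infty$ and then $r\to0$, I expect
$$
2\lm_\ii+2L=\frac{\lm_\ii^2}{4\pi},\qquad L\in[0,\lm_\ii].
$$

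\textbf{Step 2: computing $L$ case by case.} I split $B_r$ into the first-bubble region $\{|x-x_n|\le R\dt_n^{1+\bns}\eps_n^{-\bns}\}$ (Case I), respectively $\{|x-x_n|\le R\dt_n^{1+\bns}|x_n|^{-\bns}\}$ (Case II), an intermediate region, and the outer region $\{R_n\eps_n\le|x|\le r\}$, respectively $\{R_n|x_n|\le |x|\le r\}$.

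By Remark \ref{rem:reg}, \eqref{yy-convI} and \eqref{yy-convII}, the first bubble carries mass $8\pi+o(1)$. Its weight $\frac{|x|^2}{\eps_n^2+|x|^2}$ is $o(1)$ in Case I, since $|x_n|/\eps_n\to0$. In Case II it equals $\frac{1}{1+\bar\eps_0^2}+o(1)$, since $|x|\simeq|x_n|$ there.

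On compact subsets away from the blow-up point, the rescaled functions $w_n$ (respectively $\bar w_n$) tend to $-\ii$ because of \eqref{osc:wnU} (respectively \eqref{osc:barwnU}). So the mass $\lm_\ii-8\pi$ not carried by the first bubble must escape to the outer region, where the weight is $1+o(1)$.

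If this is the whole picture, then in Case I we get $L=\lm_\ii-8\pi$ and the relation becomes $(\lm_\ii-8\pi)^2=0$, that is $\lm_\ii=8\pi$ and $\sg=\frac12$. In Case II we get $L=\frac{8\pi}{1+\bar\eps_0^2}+\lm_\ii-8\pi$. Writing $\lm_\ii=8\pi(1+k)$, the relation reduces to $k^2=\frac1{1+\bar\eps_0^2}$, giving $\lm_\ii=8\pi\big(1+\frac1{\sqrt{1+\bar\eps_0^2}}\big)$ and $\sg=\frac{4\pi}{\lm_\ii}$, which matches the stated formula.

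\textbf{Step 3: the obstacle, and the $16\pi$ branch.} The delicate point is justifying the decomposition of $L$. One must control the mass in the transition annuli, where the weight is neither near $0$ nor near $1$. One must also control the outer mass, which the bounds \eqref{scaled} and \eqref{scaledII} only estimate up to a logarithmically divergent quantity. The reason is that, after scaling, the growth $|y|^{2\bns}\simeq|y|^2$ of the weight exactly balances the decay furnished by the Harnack inequality.

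My first attempt will be a second, local Pohozaev identity on $B_s$ with $s\in[R_n\eps_n,r]$ (respectively $s\in[R_n|x_n|,r]$). The boundary terms on $\pa B_s$ would be controlled by \eqref{scaled} and \eqref{scaledII} together with the Green representation of $v_n$. This should yield, for the mass $m(s)$ in $B_s$, a quadratic relation of the same form as in Step 1. Its admissible roots should force either the "first bubble plus outer mass" picture above, or a configuration where all the mass sits where the weight is $1$, so that $L=\lm_\ii$. In the latter case Step 1 gives $2\lm_\ii+2\lm_\ii=\frac{\lm_\ii^2}{4\pi}$, i.e. $\lm_\ii=16\pi$ and $\sg=\frac14$, the common alternative in both cases.

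Making this dichotomy rigorous, i.e. excluding intermediate weighted contributions from the logarithmically divergent annular integrals, is the step I expect to be hardest.
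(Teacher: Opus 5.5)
Steps 1 and 2 follow the paper's strategy, and your algebra is correct. The relation $2\lm_\ii+2L=\frac{\lm_\ii^2}{4\pi}$ is \eqref{Pohozaevextraterm} rewritten with the complementary weight, and both final formulas agree with the statement. The gap is Step 3: you leave it open, and the mechanism you propose there for the $16\pi$ branch does not hold up.

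In Case I, a configuration with all the mass where $\frac{|x|^2}{\epsilon_n^2+|x|^2}\simeq 1$ can never occur. The first bubble always puts at least $8\pi+o(1)$ inside $B_{\rho\epsilon_n}$, where this weight is at most $\rho^2$. Hence $L\le\lm_\ii-8\pi$, and your relation then forces $(\lm_\ii-8\pi)^2\le 0$. In Case II that configuration is just the case $\bar\epsilon_0=0$ of your own formula.

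In the paper, the $16\pi$ alternative does not come from any Pohozaev dichotomy. It comes from \eqref{quantized:poslm}: $\frac{4\pi}{\sg}=\lm_\ii\le\frac{8\pi}{1-2\sg}$ forces $\sg\ge\frac14$, with equality iff $\lm_\ii=16\pi$. One therefore first reduces to $\sg\in(\frac14,\frac12]$, i.e. $\lm_\ii<16\pi$. This is exactly what your Step 2 uses without saying so. Since every blow-up point carries at least $8\pi$, $w_n$ (resp. $\bar w_n$) has a single blow-up point in $\R^2$. By Remark \ref{rem:reg} it carries mass exactly $8\pi$, and the function tends to $-\ii$ on compact sets away from it. Without $\lm_\ii<16\pi$, nothing prevents a second $8\pi$ bubble at scale $\epsilon_n$ (resp. $|x_n|$), sitting where the weight is strictly between $0$ and $1$.

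Once $\lm_\ii<16\pi$ is available, the logarithmically divergent outer mass is no obstacle, because you never need to know where it sits. The total mass in $B_r$ tends to $\lm_\ii$ (Theorem \ref{massboundarycontrol}). So, as the paper does, it suffices to compute $\lim_n\int_{B_r}\frac{\epsilon_n^2}{\epsilon_n^2+|x|^2}H_ne^{v_n}=\lm_\ii-L$.
\begin{itemize}
\item On $\{|x|\ge R\epsilon_n\}$ (resp. $\{|x|\ge \bar R|x_n|\}$, using $\epsilon_n\le C|x_n|$) this weight is at most $R^{-2}$ (resp. $C\bar R^{-2}$). So that whole region, your transition annuli included, contributes $O(R^{-2})$ uniformly in $n$.
\item On $B_{R\epsilon_n}$ (resp. $B_{\bar R|x_n|}$) the single bubble contributes $8\pi+o(1)$ (resp. $8\pi\frac{\bar\epsilon_0^2}{1+\bar\epsilon_0^2}+o(1)$).
\end{itemize}
Letting $n\to\ii$ and then $R\to\ii$ gives exactly your values of $L$, with no second Pohozaev identity. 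In Case II the reduction $\lm_\ii<16\pi$ excludes $\bar\epsilon_0=0$.

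The paper handles the outer piece with a heavier tool. It uses the bounds \eqref{estimate:vn+log-firstcase} and \eqref{estimate:vn+log-secondcase} (proved, again, via $\lm_\ii<16\pi$), a Harnack inequality on annuli, and Corollary \ref{cor:scaled}. These give the decay \eqref{w-scaled}, \eqref{barw-scaled} and an outer contribution of order $\theta_n^{-2\tau_0(1+\bns)}R^{-2\bns}$. In either approach, it is the extra factor $\epsilon_n^2/|x|^2$ carried by this weight that removes the divergence you were worried about.
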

\proof
We notice that by Theorem \ref{massboundarycontrol} we necessarily have,
\[
8\pi\leq\lm_\ii\leq\min\{\tfrac{8\pi}{1-2\sg},\tfrac{4\pi}{\sg}\}.
\]
However, if $\sigma\in(0,\tfrac{1}{4})$, then $\lm_\ii\leq\tfrac{8\pi}{1-2\sg}<\tfrac{4\pi}{\sg}$, which contradicts the hypothesis.
Moreover, if $\sg=\tfrac{1}{4}$, then $\lm_\ii=16\pi$.
Therefore, in the rest of the proof we will assume $\sg\in(\tfrac{1}{4},\tfrac{1}{2}]$, which, in particular,
implies that $\lm_\ii=\tfrac{4\pi}{\sigma}<16\pi$.\\
We refine an argument first introduced in \cite{bt} as recently applied in this context in \cite{BCW1}.\\
Let us define $s_n$ to be the unique solution of,
\begin{equation*}
 \begin{cases}
  -\D s_n=0\quad &\text{in}\,\,B_1,\\
  s_n=v_n-\underset{\p B_1}\min\,v_n\quad &\text{on}\,\,\p B_1.
 \end{cases}
\end{equation*}
By standard elliptic estimates, we have that
\begin{equation*}
 \|s_n\|_\infty\leq C,
\end{equation*}
for a suitable positive constant $C$, and, along a subsequence, we may assume that $s_n\to s$ in $C^1_{loc}(B_1)$. Now, let us consider the function
\begin{equation*}
 \zeta_{n}(x)=v_n(x)-\underset{\p B_1}\min\,v_n-s_n(x),
\end{equation*}
which satisfies,
\begin{equation*}
 \begin{cases}
  -\D \zeta_n=\widehat W_n e^{\zeta_n}\quad \text{in}\quad B_1, \\
  \int_{B_1}\widehat W_ne^{\zeta_n}\leq C,\\
  \zeta_n=0 \quad \text{on}\quad \p B_1,
 \end{cases}
\end{equation*}
where
\begin{equation*}
 \widehat W_n(x)=H_n(x)e^{\omega_n(x)}
\end{equation*}
and
\begin{equation*}
 \omega_n(x)=s_n(x)+\underset{\p B_1}\min \,v_n.
\end{equation*}
We notice that,
\begin{equation*}
 \nabla\omega_n\to\nabla s\,\,\text{and}\,\, \nabla K_n\to\nabla K, \,\,\text{uniformly on compact sets of $B_1$}.
\end{equation*}
Moroever, since $\widehat W_n(x)e^{\dis \zeta_n}=H_ne^{\dis v_n}$, by Theorem \ref{Concentration}, we have that
\begin{equation}
 \label{vanishing}
 \widehat W_ne^{\dis \zeta_n}\to0\qquad \text{uniformly on compact sets of $B_1\backslash\{0\}$}
\end{equation}
and
\begin{equation}
 \label{concentrationwn}
 \widehat W_ne^{\dis \zeta_n}\rightharpoonup \beta\delta_{p=0}\qquad \text{weakly in the sense of measure in $B_1$},
\end{equation}
where we are using that $\beta$ satisfies \eqref{beqlm} and \eqref{quantized:poslm}. Moreover,
\begin{equation}
 \label{concentration2}
 h_{n}^{2\bns}e^{\dis \omega_n}e^{\dis \zeta_n}\rightharpoonup\tfrac{\lm_\ii}{K(0)}\delta_{p=0}\qquad \text{weakly in the sense of measure in $B_1$}.
\end{equation}
Let us set $f_n(x)=\widehat W_n(x)e^{\dis \zeta_n(x)}$, then by Green's representation formula,
\begin{equation*}
 \zeta_n(x)=-\tfrac{1}{2\pi}\int_{B_1}\ln (|x-y| )f_n(y)\,dy+\int_{B_1}R(x,y)f_n(y)\,dy,
\end{equation*}
where $R(x,y)$ is the regular part of Green's function associated to the Laplacian operator with Dirichlet boundary conditions on $B_1$.
Then, passing to the limit in the previous identity, we deduce that
\begin{equation}
 \label{wnconverges}
 \zeta_n(x)\to -\tfrac{\lm_\ii}{2\pi}\ln|x|+ g(x)\quad \text{in} \,\,C^1_{\rm loc}(B_1\backslash\{0\}),
\end{equation}
for some $g\in C^1(B_1)$. Let us set,
\[
 \zeta_0(x)= -\tfrac{\lm_\ii}{2\pi}\ln|x|+ g(x),
\]
we use the well known Pohozaev identity, for $r\in(0,1)$,
\begin{align}
 \nonumber
 \int_{\p B_r(0)}\Big((x,\nu)\tfrac{\dis |\grad \zeta_n|^2}{2}-&(\nu,\grad \zeta_n)(x,\grad \zeta_n)\Big)d\sigma= \\ \label{Pohozaevnwn}
 &=\int_{\p B_r(0)}(x,\nu)\widehat W_n e^{\dis \zeta_n}d\sigma-\int_{B_r}(2\widehat W_n+x\cdot\grad \widehat W_n)e^{\dis \zeta_n}\,dx.
\end{align}

First of all, observe that
\begin{align*}
 &-\int_{B_r}x\cdot\grad \widehat W_ne^{\dis \zeta_n}\,dx= \\
 &=-\int_{B_r}x\cdot\grad (K_ne^{ \omega_n})h_{n}^{2\bns}e^{\dis \zeta_n}\,dx-
 2\bns\int_{B_r}|x|^2\big(\epsilon_n^2+|x|^2\big)^{\bns-1}K_ne^{ \omega_n}e^{\dis \zeta_n}\,dx\\
 &=-\int_{B_r}\tfrac{x\cdot\grad (K_ne^{ \omega_n})}{K_ne^{ \omega_n}}\widehat W_ne^{\dis \zeta_n}\,dx-2\bns\int_{B_r}\widehat W_ne^{\dis \zeta_n}\,dx+
 2\bns\int_{B_r}\epsilon_n^2\big(\epsilon_n^2+|x|^2\big)^{\bns-1}K_ne^{\dis \omega_n}e^{\dis \zeta_n}\,dx \\
 &=-\int_{B_r}\tfrac{x\cdot\grad (K_ne^{ \omega_n})}{K_ne^{ \omega_n}}\widehat W_ne^{\dis \zeta_n}\,dx-2\bns\int_{B_r}\widehat W_ne^{\dis \zeta_n}\,dx+
 2\bns\int_{B_r}\tfrac{\epsilon_n^2}{\epsilon_n^2+|x|^2}\widehat W_ne^{\dis \zeta_n}\,dx.
\end{align*}

As in \cite{bt}, by using \eqref{vanishing}, \eqref{concentration2} and \eqref{wnconverges},  taking the limits as $n\to +\ii$ and $r\to 0$, we have that
$$
\int_{\p B_r(0)}\Big((x,\nu)\tfrac{\dis |\grad \zeta_n|^2}{2}-(\nu,\grad \zeta_n)(x,\grad \zeta_n)\Big)d\sigma=-\tfrac{\lm_\ii^2}{4\pi}+o(1),
$$
and
$$
\int_{B_r}\tfrac{x\cdot\grad (K_ne^{ \omega_n})}{K_ne^{ \omega_n}}\widehat W_ne^{\dis \zeta_n}\,dx=o(1), \qquad \int_{\p B_r(0)}(x,\nu)\widehat W_n e^{\dis \zeta_n}d\sigma=o(1),
$$
$$
2\int_{B_r}\widehat W_n e^{\dis \zeta_n}\,dx=2 \lm_\ii +o(1), \qquad 2\bns\int_{B_r}\widehat W_n e^{\dis \zeta_n}\,dx=2\bis \lm_\ii +o(1).
$$
In particular, we obtain that
\begin{equation}
    \label{Pohozaevextraterm}
    2\bns\int_{B_r}\tfrac{\epsilon_n^2}{\epsilon_n^2+|x|^2}\widehat W_ne^{\dis \zeta_n}\,dx=2\bns\int_{B_r}\tfrac{\epsilon_n^2}{\epsilon_n^2+|x|^2} H_ne^{\dis v_n}\,dx=-\tfrac{\lm_\ii^2}{4\pi}+2(1+\bis)\lm_\ii+o(1).
\end{equation}
However, the term $2\bns\int_{B_r}\tfrac{\epsilon_n^2}{\epsilon_n^2+|x|^2}\widehat W_ne^{\dis \zeta_n}\,dx$ can be estimated by using Corollary \ref{cor:scaled}. We discuss the different cases one by one peaking the notations of Corollary \ref{cor:scaled}.\\
CASE (I): $\tfrac{\epsilon_n}{\dt_n}\to+\infty$ and $\tfrac{\epsilon_n}{|x_n|}\to+\infty$.\\
We exploit some properties established in the proof of Theorem \ref{supinfI}. Recall that \eqref{yy-convI.0} and \eqref{yy-convI} hold. 
Moreover, we show the following estimate
\begin{equation}
\label{estimate:vn+log-firstcase}
\underset{B_1(0)\backslash B_{4\epsilon_n}(0)}\sup \{v_n+2(1+\bns)\log|y|\}\leq C.
\end{equation}

Let assume by contradiction that there exists $\{y_{n,1}\}$ such that $v_n(y_{n,1})+2(1+\bns)\log|y_{n,1}|\to+\infty$ and $4\epsilon_n\leq |y_{n,1}|\leq 1$. Then, we define
\[
\tilde v_{n,1}(z)=v_n(|y_{n,1}|z)+2(1+\bns)\log|y_{n,1}|,
\]
which satisfies
\begin{align}\nonumber
\begin{cases}
-\D \tilde v_{n,1}=\tilde V_{n,1}(z)e^{\dis \tilde v_{n,1}} \qquad\text{in}\,\,B_{\frac{1}{|y_{n,1}|}}(0),\\
\tilde V_{n,1}(z)=\left({\frac{\epsilon_n^2}{|y_{n,1}|}+|z|^2}\right)^{\bns}
K_n(|y_{n,1}|z)\to c_0(\epsilon_1^2+|z|^2)^{\bis}\;\mbox{in}\;C^r_{loc}(\R^2),\\
\int_{B_{\frac{1}{|y_{n,1}|}}}\tilde V_{n,1} e^{\dis \tilde v_{n,1}}= \lm_\ii +o(1),\\
v_{n,1}(\tfrac{y_{n,1}}{|y_{n,1}|})\to +\infty.
\end{cases}
\end{align}
where we have used the fact that $\tfrac{\epsilon_n}{|y_{n,1}|}\leq\tfrac{1}{4}$ and $\tfrac{\epsilon_n}{|y_{n,1}|}\to\epsilon_1$, up to a subsequence. Also $\tfrac{y_{n,1}}{|y_{n,1}|}\to y_1$, with $|y_1|=1$ and, by the same argument of Remark \ref{rem:reg}, we have that $y_1$ is a simple blow up and
\[
\int_{B_{\delta|y_{n,1}|}(|y_{n,1}|y_1)}H_n e^{v_n}=\int_{B_\delta(y_1)}\tilde V_{n,1}e^{\tilde v_{n,1}}=8\pi+o(1),
\]
for $\delta>0$ small enough. We already know that
\[
\int_{B_{4\epsilon_n}(0)}H_ne^{v_n}=\int_{B_4(0)} V_ne^{ w_n}=8\pi +o(1).
\]
Moreover, $B_{\delta|y_{n,1}|}(|y_{n,1}|y_1)\cap B_{4\epsilon_n}(0)=\emptyset$, for $\delta$ small enough.

Therefore,
\[
\int_{B_1(0)}H_n e^{\dis v_n}\geq\int_{B_{4\epsilon_n}(0)}H_ne^{v_n}+ \int_{B_{\delta|y_{n,1}|}(|y_{n,1}|y_1)}H_n e^{\dis v_n}= 16\pi+o(1),
\]
for $\delta$ sufficiently small, but this is in contradiction with the fact that $\int_{B_1}H_ne^{v_n}= \lm_\ii +o(1)<16\pi+o(1)$. This proves \eqref{estimate:vn+log-firstcase}.\\

Let us set $s\in[8\eps_n,\tfrac12]$ and
\begin{equation}
    \label{vnhat}
 \hat v_n(x)=v_n(sx)+2(1+\bns)\log s,
\end{equation}
for every $x\in \hat\O:=B_2\backslash B_{\frac{1}{2}}$. Then,
\[
 -\D \hat v_n(x)=({\tfrac{\epsilon_n^2}{s^2}+|x|^2})^{\bns}K_{n}(sx)e^{\dis \hat v_{n}(x)}=:f_n(x) \quad\text{in}\quad \hat\O.
\]
By \eqref{estimate:vn+log-firstcase} we have $\|f_n\|_{L^{\infty}(\hat \O)}\leq C$ and we define $\kappa_n$ to be the unique solution of
\begin{align*}
\begin{cases}
-\D \kappa_n=f_n & \hat\O, \\
\kappa_n=0 &\p\hat\O.
\end{cases}
\end{align*}
Thus, by standard elliptic estimates, $\|\kappa_n\|_{L^\infty(\hat \O)}\leq C$ and the harmonic function $\hat g_n=\kappa_n-\hat v_n$
is bounded from below by some constant $-C$. Therefore, by the Harnack inequality, we deduce there exists a universal constant $\tau_0\in(0,1)$ such that
\[
 \tau_0\sup_{\p B_1} (\hat g_n+C)\leq \inf_{\p B_1}(\hat g_n+C).
\]
Going back to $\hat v_n$ we have that,
\[
 \sup_{\p B_1} \hat v_n\leq\tau_0 \inf_{\p B_1}\hat v_n+\hat C,
\]
which implies
\begin{equation*}
 \sup_{\p B_s} v_n\leq\tau_0 \inf_{\p B_s}v_n-2(1-\tau_0)(1+\bns)\log s+\hat C,
\end{equation*}
for every $s\in[8\eps_n,\tfrac12]$. Thus, in view of Corollary \ref{cor:scaled}, we have
\begin{align*}
 \sup_{\p B_s} v_n\leq & 2\tau_0 (1+\bns)\log(\tfrac{\dt_n}{\eps_n})-2\tau_0(1+\bns)\log s+2\tau_0(1-\bns)\log \tfrac{\eps_n}{s} +\tau_0 C_I\nonumber \\
 &\hspace{1cm} -2(1-\tau_0)(1+\bns)\log s+\hat C=\nonumber \\
 & =2\tau_0 (1+\bns)\log(\tfrac{\dt_n}{\eps_n})-2(1+\bns)\log s+2\tau_0(1-\bns)\log \tfrac{\eps_n}{s}+C.
\end{align*}
which implies that, for every $|x|\in [8\eps_n,\tfrac12]$,
\begin{align}
 v_n(x)\leq 2\tau_0 (1+\bns)\log(\tfrac{\dt_n}{\eps_n})-2(1+\bns)\log |x|+2\tau_0(1-\bns)\log \tfrac{\eps_n}{|x|}+C.\label{w-scaled}
\end{align}

\bigskip

In this situation, by a diagonal argument we can find a subsequence such that, we have,
\begin{align*}
2\bns\int_{B_r}\tfrac{\epsilon_n^2}{\epsilon_n^2+|x|^2}\widehat W_ne^{\dis \zeta_n}\,dx&=2\bns\int_{B_{\frac r2}}\tfrac{\epsilon_n^2}{\epsilon_n^2+|x|^2}\widehat W_ne^{\dis \zeta_n}\,dx+o(1)\\
&=I_{1,n}+I_{2,n,r}+o(1),
\end{align*}
where, for any $R\geq8$,
\begin{align*}
    I_{1,n}:&=2\bns\int_{B_{R\epsilon_n}(0)}\tfrac{\epsilon_n^2}{\epsilon_n^2+|x|^2}\widehat W_ne^{\dis \zeta_n}\,dx=2\bns\int_{B_{R\epsilon_n}(0)}\tfrac{\epsilon_n^2}{\epsilon_n^2+|x|^2} H_ne^{v_n}\,dx\\
    &=2\bns\int_{B_{R}(0)}\tfrac{1}{1+|x|^2} V_ne^{w_n}\,dx\underbrace=_{Remark \, \ref{rem:reg}}\bis16\pi+o(1),
\end{align*}
and, at last, putting $\theta_n=\frac{\eps_n}{\dt_n}$ and recalling $\bns\leq 1$ and \eqref{w-scaled},
$$
0\leq I_{2,n,r}:= 2\bns\int_{B_{\frac{r}{2}}\setminus B_{R \eps_n}}\tfrac{\epsilon_n^2}{\epsilon_n^2+|x|^2}\widehat W_ne^{\dis \zeta_n}\,dx\leq
$$
$$
C \theta_n^{-2\tau_0(1+\bns)}\int_{B_1\setminus B_{ R \eps_n}}\frac{\epsilon_n^2}{\epsilon_n^2+|x|^2}(\epsilon_n^2+|x|^2)^{\bns}\frac{dx}{|x|^{2(1+\bns)}}=
$$
$$
C \theta_n^{-2\tau_0(1+\bns)}\int^{\eps_n^{-1}}_{ R}\frac{(1+t^2)^{\bns-1}}{t^{2\bns+1}}\,dt\leq
C \theta_n^{-2\tau_0(1+\al_n)}\int^{+\ii}_{ R}\frac{dt}{t^{2\bns+1}}\leq C \frac{\theta_n^{-2\tau_0(1+\bns)}}{ R^{2\bns}}\to 0,
$$
where the limit as $n\to +\ii$ is obviously uniform in $r\in (0,1)$. Hence,
\[
2\bns\int_{B_r}\tfrac{\epsilon_n^2}{\epsilon_n^2+|x|^2}\widehat W_ne^{\dis \zeta_n}\,dx=\bis16\pi +o(1).
\]
Summarizing, from \eqref{Pohozaevextraterm} we have,

\[
-\tfrac{\lm_\ii^2}{4\pi}+2(1+\bns)\lm_\ii=\bis 16\pi,
\]
that is, recalling that $\bis=1$,  i.e. $\lm_\ii=\frac{4\pi}{\sg}$, we readily deduce that
$$
4\sigma^2-4\sigma+1=0,
$$
which implies that $\sg=\frac12$.\\

This fact concludes the discussion of CASE (I).\\

CASE (II): $\tfrac{\epsilon_n}{t_n}\leq C$ and $\tfrac{|x_n|}{\delta_n}\to+\infty$.

In particular, up to a subsequence,
\[
 \frac{\epsilon_n}{|x_n|}\to\bar \epsilon_0\geq  0,\quad \frac{x_n}{|x_n|}\to \bar z_0,\;\;|\bar z_0|=1,\quad \text{as}\,\,n\to+\infty.
\]
We exploit some properties established in the proof of Theorem \ref{supinfI}. Recall that \eqref{yy-convII} and \eqref{yy-convII.0} hold true. 
Moreover, we show the following estimate
\begin{equation}
\label{estimate:vn+log-secondcase}
\underset{B_1(0)\backslash B_{4|x_n|}(0)}\sup \{v_n+2(1+\bns)\log|y|\}\leq C.
\end{equation}

Let assume by contradiction that there exists $\{y_{n,2}\}$ such that $v_n(y_{n,2})+2(1+\bns)\log|y_{n,2}|\to+\infty$ and $4|x_n|\leq |y_{n,2}|\leq 1$. Then, we define
\[
\tilde v_{n,2}(z)=v_n(|y_{n,2}|z)+2(1+\bns)\log|y_{n,2}|,
\]
which satisfies
\begin{align}\nonumber
\begin{cases}
-\D \tilde v_{n,2}=\tilde V_{n,2}(z)e^{\dis \tilde v_{n,2}} \qquad\text{in}\,\,B_{\frac{1}{|y_{n,2}|}}(0),\\
\tilde V_{n,2}(z)=\left({\frac{\epsilon_n^2}{|y_{n,2}|^2}+|z|^2}\right)^{\bns}
K_n(|y_{n,2}|z)\to c_0(\epsilon_2^2+|z|^2)^{\bis}\;\mbox{in}\;C^r_{loc}(\R^2),\\
\int_{B_{\frac{1}{|y_{n,2}|}}}\tilde V_{n,2} e^{\dis \tilde v_{n,2}}= \lm_\ii +o(1),\\
v_{n,2}(\tfrac{y_{n,2}}{|y_{n,2}|})\to +\infty.
\end{cases}
\end{align}
where we have used the fact that $\tfrac{\epsilon_n}{|y_{n,2}|}=\tfrac{\epsilon_n}{|x_n|}\tfrac{|x_n|}{|y_{n,2}|}\leq\tfrac{C}{4}$ and $\tfrac{\epsilon_n}{|y_{n,2}|}\to\epsilon_2$, up to a subsequence. Also $\tfrac{y_{n,2}}{|y_{n,2}|}\to y_2$, with $|y_2|=1$ and, by the same argument of Remark \ref{rem:reg}, we have that $y_2$ is a simple blow up and
\[
\int_{B_{\delta|y_{n,2}|}(|y_{n,2}|y_2)}H_n e^{v_n}=\int_{B_\delta(y_2)}\tilde V_{n,2}e^{\tilde v_{n,2}}=8\pi+o(1),
\]
for $\delta>0$ small enough. We already know that
\[
\int_{B_{4|x_n|}(0)}H_ne^{v_n}=\int_{B_4(0)} V_ne^{ w_n}=8\pi +o(1).
\]
Moreover, $B_{\delta|y_{n,2}|}(|y_{n,2}|y_2)\cap B_{4|x_n|}(0)=\emptyset$, for $\delta$ small enough.

Therefore,
\[
\int_{B_1(0)}H_n e^{\dis v_n}\geq\int_{B_{4|x_n|(0)}}H_ne^{v_n}+ \int_{B_{\delta|y_{n,2}|}(|y_{n,2}|y_2)}H_n e^{\dis v_n}= 16\pi+o(1),
\]
for $\delta$ sufficiently small, but this is in contradiction with the fact that $\int_{B_1}H_ne^{v_n}= \lm_\ii +o(1)<16\pi+o(1)$. This proves \eqref{estimate:vn+log-secondcase}.\\

Let us set $s\in[8|x_n|,\tfrac12]$ and
\[
 \hat v_n(x)=v_n(sx)+2(1+\bns)\log s,
\]
for every $x\in \hat\O:=B_2\backslash B_{\frac{1}{2}}$. Then,
\[
 -\D \hat v_n(x)=({\tfrac{\epsilon_n^2}{s^2}+|x|^2})^{\bns}K_{n}(sx)e^{\dis \hat v_{n}(x)}=:f_n(x) \quad\text{in}\quad \hat\O.
\]
By \eqref{estimate:vn+log-secondcase} we have $\|f_n\|_{L^{\infty}(\hat \O)}\leq C$ and we define $\kappa_n$ to be the unique solution of
\begin{align*}
\begin{cases}
-\D \kappa_n=f_n & \hat\O, \\
\kappa_n=0 &\p\hat\O.
\end{cases}
\end{align*}
Thus, by standard elliptic estimates, $\|\kappa_n\|_{L^\infty(\hat \O)}\leq C$ and the harmonic function $\hat g_n=\kappa_n-\hat v_n$
is bounded from below by some constant $-C$. Therefore, by the Harnack inequality, we deduce there exists a universal constant $\tau_0\in(0,1)$ such that
\[
 \tau_0\sup_{\p B_1} (\hat g_n+C)\leq \inf_{\p B_1}(\hat g_n+C).
\]
Going back to $\hat v_n$ we have that,
\[
 \sup_{\p B_1} \hat v_n\leq\tau_0 \inf_{\p B_1}\hat v_n+\hat C,
\]
which implies
\begin{equation*}
 \sup_{\p B_s} v_n\leq\tau_0 \inf_{\p B_s}v_n-2(1-\tau_0)(1+\bns)\log s+\hat C,
\end{equation*}
for every $s\in[8|x_n|,\tfrac12]$. Thus, in view of Corollary \ref{cor:scaled}, we have
\begin{align*}
 \sup_{\p B_s} v_n\leq & 2\tau_0 (1+\bns)\log(\tfrac{\dt_n}{|x_n|})-2\tau_0(1+\bns)\log s+2\tau_0(1-\bns)\log \tfrac{|x_n|}{s} +\tau_0 C_{II}\nonumber \\
 &\hspace{1cm} -2(1-\tau_0)(1+\bns)\log s+\hat C=\nonumber \\
 & =2\tau_0 (1+\bns)\log(\tfrac{\dt_n}{|x_n|})-2(1+\bns)\log s+2\tau_0(1-\bns)\log \tfrac{|x_n|}{s}+C,
\end{align*}
which implies that, for every $|x|\in [8|x_n|,\tfrac12]$
\begin{align}
 v_n(x)\leq 2\tau_0 (1+\bns)\log(\tfrac{\dt_n}{|x_n|})-2(1+\bns)\log |x|+2\tau_0(1-\bns)\log \tfrac{|x_n|}{|x|}+C.\label{barw-scaled}
\end{align}

\bigskip

In this situation, by a diagonal argument we can find a subsequence such that, we have,
\begin{align*}
2\bns\int_{B_r}\tfrac{\epsilon_n^2}{\epsilon_n^2+|x|^2}\widehat W_ne^{\dis \zeta_n}\,dx&=2\bns\int_{B_{\frac r2}}\tfrac{\epsilon_n^2}{\epsilon_n^2+|x|^2}\widehat W_ne^{\dis \zeta_n}\,dx+o(1)\\
&=I_{1,n}+I_{2,n}+I_{3,n,r}+o(1),
\end{align*}
where, for any $R$ sufficiently small,
\begin{align*}
    I_{1,n}:&=2\bns\int_{B_{R|x_n|}(|x_n|\bar z_0)}\tfrac{\epsilon_n^2}{\epsilon_n^2+|x|^2}\widehat W_ne^{\dis \zeta_n}\,dx=2\bns\int_{B_{R|x_n|}(|x_n|\bar z_0)}\tfrac{\epsilon_n^2}{\epsilon_n^2+|x|^2} H_ne^{v_n}\,dx\\
    &=2\bns\int_{B_{R}(\bar z_0)}\tfrac{\frac{\epsilon_n^2}{|x_n|^2}}{\frac{\epsilon_n^2}{|x_n|^2}+|x|^2} \bar V_ne^{\bar w_n}\,dx=2\bis \frac{\bar\epsilon_0^2}{\bar\epsilon_0^2+1}8\pi+o(1),
\end{align*}
since, by Remark \ref{rem:reg}, $\bar V_ne^{\bar w_n}\rightharpoonup 8\pi \dt_{\bar z_0}$, with $|\bar z_0|=1$, whereas, for $\bar R\geq8$,
\begin{align*}
I_{2,n}:&=2\bns\int\limits_{B_{\bar R|x_n|}(0)\backslash B_{R|x_n|}(|x_n|\bar z_0)}\tfrac{\epsilon_n^2}{\epsilon_n^2+|x|^2}\widehat W_ne^{\dis \zeta_n}\,dx=2\bns\int\limits_{B_{\bar R|x_n|}(0)\backslash B_{R|x_n|}(|x_n|\bar z_0)}\tfrac{\epsilon_n^2}{\epsilon_n^2+|x|^2} H_ne^{v_n}\,dx\\
    &=2\bns\int\limits_{B_{\bar R}(0)\backslash B_{R}(\bar z_0)}\tfrac{\frac{\epsilon_n^2}{|x_n|^2}}{\frac{\epsilon_n^2}{|x_n|^2}+|x|^2} \bar V_ne^{\bar w_n}\,dx=o(1),
\end{align*}
where we used again the Remark \ref{rem:reg}, and, at last, putting $\bar\theta_n=\frac{|x_n|}{\dt_n}$ and recalling $\bns\leq 1$ and \eqref{barw-scaled},
$$
0\leq I_{3,n,r}:= 2\bns\int_{B_{\frac{r}{2}}\setminus B_{\bar R |x_n|}}\tfrac{\epsilon_n^2}{\epsilon_n^2+|x|^2}\widehat W_ne^{\dis \zeta_n}\,dx\leq
$$
$$
C \bar\theta_n^{-2\tau_0(1+\al_n)}\int_{B_1\setminus B_{ \bar R |x_n|}}\frac{\epsilon_n^2}{\epsilon_n^2+|x|^2}(\epsilon_n^2+|x|^2)^{\bns}\frac{dx}{|x|^{2(1+\bns)}}=
$$
$$
C \bar\theta_n^{-2\tau_0(1+\bns)}\int^{\eps_n^{-1}}_{ \bar R}\frac{(1+t^2)^{\bns-1}}{t^{2\bns+1}}\,dt\leq
C \bar\theta_n^{-2\tau_0(1+\bns)}\int^{+\ii}_{ \bar R}\frac{dt}{t^{2\bns+1}}\leq C \frac{\bar\theta_n^{-2\tau_0(1+\bns)}}{ \bar R^{2\bns}}\to 0,
$$
where the limit as $n\to +\ii$ is obviously uniform in $r\in (0,1)$. Hence,
\[
2\bns\int_{B_r}\tfrac{\epsilon_n^2}{\epsilon_n^2+|x|^2}\widehat W_ne^{\dis \zeta_n}\,dx=2\bis \frac{\bar\epsilon_0^2}{\bar\epsilon_0^2+1}8\pi +o(1).
\]
Summarizing we have,

\[
-\tfrac{\lm_\ii^2}{4\pi}=-2\lm_\ii-2\bis\lm_\ii+\bis 16\pi  \frac{\bar\epsilon_0^2}{\bar\epsilon_0^2+1},
\]
that is, recalling that $\bis=1$,  i.e. $\lm_\ii=\frac{4\pi}{\sg}$, we readily deduce that
$$
4\frac{\bar\epsilon_0^2}{\bar\epsilon_0^2+1}\sigma^2-4\sigma+1=0.
$$
If $\bar\epsilon_0=0$, then $\sg=\frac{1}{4}$, but we are considering $\sg\in(\tfrac{1}{4},\tfrac{1}{2}]$. If $\bar\epsilon_0\neq0$, then
\[
\sigma=\frac{1}{2}\left(1\pm\sqrt{\frac{1}{1+\bar\epsilon_0^2}}\right)\left(1+\frac{1}{\bar\epsilon_0^2}\right).
\]
Therefore, we deduce that
\[
\sigma=\frac{1}{2}\left(1-\sqrt{\frac{1}{1+\bar\epsilon_0^2}}\right)\left(1+\frac{1}{\bar\epsilon_0^2}\right)\equiv\frac{1}{2}\left(1-{\frac{1}{1+\sqrt{1+\bar\epsilon_0^2}}}\right)
\]
and, in particular $\sg<\tfrac12$.

\begin{remark}
We remark that $\sg=\frac12$ if and only if $\bar\epsilon_0=+\ii$, in which case, we are back at Case I, $\tfrac{\epsilon_n}{|x_n|}\to+\ii$.
\end{remark}

This fact concludes the discussion of CASE (II).
\finedim

\bigskip
\bigskip

\section{A refinement of Theorem \ref{quant:imp}: mass-quantization in CASE (II)}\label{sec4}

In this section we refine the analysis of CASE (II) in Theorem \ref{quant:imp}, showing that only $\sg=\frac14$ and consequently $\lm_\ii=16\pi$
is allowed in that case. To this aim, we need to rule out the case where $\bar\epsilon_0>0$.

\begin{proposition}\label{caseII.quant}
Under the assumptions of Theorem \ref{quant:imp}, suppose that the conditions of CASE (II) in Theorem \ref{profile-new} are satisfied. Then
$\sg=\frac14$ and $\lm_\ii=16\pi$.
\end{proposition}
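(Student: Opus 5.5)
We argue by contradiction. By Theorem \ref{quant:imp}, in CASE (II) either $\sg=\frac14$, $\lm_\ii=16\pi$, or $\bar\epsilon_0>0$ with $\sg\in(\frac14,\frac12)$ and $\lm_\ii=8\pi\bigl(1+(1+\bar\epsilon_0^2)^{-1/2}\bigr)\in(8\pi,16\pi)$. So it suffices to assume $\bar\epsilon_0>0$ and prove that the total mass is in fact $8\pi+o(1)$, which contradicts $\lm_\ii>8\pi$. The contradiction then comes from a clean mass decomposition rather than from the Pohozaev identity, since the Pohozaev term is exactly what fails to determine $\bar\epsilon_0$.

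\emph{Step 1: mass decomposition.} Split $\int_{B_r}H_ne^{v_n}$ over the same three regions used in the proof of Theorem \ref{quant:imp}:
\begin{itemize}
\item the bubble region $B_{R|x_n|}(|x_n|\bar z_0)$, which carries $8\pi+o(1)$ by Remark \ref{rem:reg} applied to $\bar w_n$;
\item the intermediate region $B_{\bar R|x_n|}\setminus B_{R|x_n|}(|x_n|\bar z_0)$, which carries $o(1)$, because the weight $\frac{\epsilon_n^2}{\epsilon_n^2+|x|^2}$ is bounded below there when $\bar\epsilon_0>0$ and the computation of $I_{2,n}$ applies;
\item the annulus $A_n=B_{r/2}\setminus B_{\bar R|x_n|}$.
\end{itemize}
Mass outside $B_{r/2}$ vanishes by Theorem \ref{Concentration}. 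So everything reduces to showing $\int_{A_n}H_ne^{v_n}=o(1)$.

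\emph{Step 2: reduction to an indeterminate form.} On $A_n$ I use \eqref{barw-scaled} together with $H_n\le C|x|^{2\bns}$ for $|x|\ge\bar R|x_n|\gg\epsilon_n$. Writing $\bar\theta_n=|x_n|/\dt_n$, this gives
$$
\int_{A_n}H_ne^{v_n}\le C\bar\theta_n^{-2\tau_0(1+\bns)}\int_{\bar R|x_n|}^{1}\Bigl(\tfrac{|x_n|}{t}\Bigr)^{2\tau_0(1-\bns)}\frac{dt}{t}\le C\bar\theta_n^{-2\tau_0(1+\bns)}\min\Bigl\{\log\tfrac{1}{|x_n|},\,\tfrac{1}{\tau_0(1-\bns)}\Bigr\}.
$$
Since $\bns\to1$, the right-hand side is of the form $\bar\theta_n^{-c}\log\frac1{|x_n|}$, and this is the indeterminate quantity. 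The goal is therefore an upper bound of the type
$$
\log\frac{1}{|x_n|}\le C\log\bar\theta_n+C,\qquad\text{equivalently}\qquad \dt_n\ge |x_n|^{C'}.
$$
This is the content I expect Lemma \ref{lem:barwn} to provide.

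\emph{Step 3: the key estimate (main obstacle).} I would obtain this bound from elliptic estimates comparing a lower and an upper bound for $v_n$ at an intermediate radius.
\begin{itemize}
\item \emph{Lower bound at the boundary.} By \eqref{oscillation} and Green's representation on $B_1$, with harmonic part $\ge\min_{\pa B_1}v_n$ and a nonnegative Green potential, $v_n\ge \min_{\pa B_1}v_n$ in $B_1$. Moreover $\min_{\pa B_1}v_n$ cannot drift to $-\infty$ faster than a fixed multiple of $\log|x_n|$ plus $\log\bar\theta_n$ terms. To see this, test the Green representation at $|x|=\frac12$ and match it with the exterior behaviour of $\bar w_n$ near $|z|=2$, where the bubble gives $\bar w_n\approx-2(1+\bns)\log\bar\theta_n+O(1)$.
\item \emph{Upper bound from Corollary \ref{cor:scaled}.} Equation \eqref{scaledII} at $s=\frac12$ gives $\inf_{\pa B_{1/2}}v_n\le C+2(1+\bns)\log\frac{\dt_n}{|x_n|}+2(1-\bns)\log|x_n|$.
\item \emph{Comparison.} Compare these two bounds, using the Harnack-type control $\sup_{\pa B_s}v_n\le\tau_0\inf_{\pa B_s}v_n+C$ established in the proof of Theorem \ref{quant:imp}. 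Any excess of $\log\frac1{|x_n|}$ over $\log\bar\theta_n$ would force the annulus to carry mass close to $\lm_\ii-8\pi>0$ spread over logarithmically many dyadic scales. I would then check this against the flux identity $-\int_{\pa B_s}\pa_\nu v_n=\int_{B_s}H_ne^{v_n}$: the radial derivative of the spherical average would be $\le-\frac{\lm_\ii}{2\pi s}$ on those scales, and integrating gives a decay strictly faster than the upper bound allows.
\end{itemize}
If this direct route fails, I would instead try to bound $(1-\bns)\log\frac1{|x_n|}$ from below via the Pohozaev remainder: it is exactly what produced the interval of $\bar\epsilon_0$ values, and keeping its $O\bigl((1-\bns)\log\frac1{|x_n|}\bigr)$ corrections should decide the sign.

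\emph{Step 4: conclusion.} Once the annulus mass is $o(1)$, Step 1 gives $\lm_\ii=8\pi$, contradicting $\lm_\ii=8\pi\bigl(1+(1+\bar\epsilon_0^2)^{-1/2}\bigr)>8\pi$. Hence $\bar\epsilon_0>0$ is impossible, and Theorem \ref{quant:imp} leaves only $\sg=\frac14$, $\lm_\ii=16\pi$.
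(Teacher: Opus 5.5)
Your Steps 1--2 are correct. They reduce everything to showing $\bar\theta_n^{-2\tau_0(1+\bns)}\log\frac{1}{|x_n|}\to 0$, which is exactly the logarithmically divergent indeterminate form the paper says the \cite{ls} route runs into. The gap is Step 3. Nothing in your sketch yields $\log\frac{1}{|x_n|}\le C\log\bar\theta_n+C$, and this is not what Lemma \ref{lem:barwn} provides.

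The comparison you propose is vacuous. A lower bound for $\min_{\pa B_1}v_n$ from Green's representation necessarily contains a term $-C\log\frac{1}{|x_n|}$ with $C$ of the order of $(\lm_\ii-8\pi)/2\pi$, because the extra mass $\lm_\ii-8\pi$ sits at unknown scales between $|x_n|$ and $1$. In \eqref{scaledII}, by contrast, the coefficient of $\log\frac{1}{|x_n|}$ is $2(1-\bns)\to 0$. So the two bounds are compatible whatever the relative size of $\log\frac{1}{|x_n|}$ and $\log\bar\theta_n$. Since only the oscillation of $v_n$ on $\pa B_1$ is prescribed, not its level, no relation between $|x_n|$ and $\dt_n$ can come from there.

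The flux argument is inverted. If $\bar v_n(s)$ is the average of $v_n$ on $\pa B_s$, then $s\frac{d}{ds}\bar v_n(s)=-\frac{1}{2\pi}\int_{B_s}H_ne^{v_n}$ is strictly larger than $-\frac{\lm_\ii}{2\pi}$ precisely on the scales where the annulus mass is being collected. Moreover, a faster decay is compatible with an upper bound; it can only contradict the fact that a definite amount of mass still lies further out. The Pohozaev fallback is already exhausted, since it is exactly what produced the one-parameter family in Theorem \ref{quant:imp}.

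What is missing is to work at the scale where the enclosed mass exceeds $8\pi$ by a fixed amount, rather than to relate $|x_n|$ and $\dt_n$. Write $\lm_\ii=8\pi(1+\tau)$.
\begin{itemize}
\item The paper picks $d_n$ with $\int_{B_{d_n}}\bar V_ne^{\bar w_n}=8\pi+7\pi\tau$. Then $d_n\to+\ii$, $|x_n|d_n\to0$, and exactly $\pi\tau+o(1)$ lies outside $B_{d_n}$.
\item For $w_n^*(y)=\bar w_n(d_ny)+2(1+\bns)\log d_n$ the weight tends to $K(0)|y|^2$. Using $\lm_\ii<16\pi$ and \cite{os}, one shows that $w_n^*$ concentrates at $0$ with mass $8\pi+7\pi\tau$ in $B_1$ and is bounded above on $\{|y|\ge 2\}$.
\item Green's representation then gives $w_n^*(y)\le\omega_n-2(1+\bns)\log\frac{\bar\theta_n}{d_n}-(4+\tau)\log|y|+O(1)$ for $|y|\ge2$. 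The enclosed mass $8\pi+7\pi\tau$ yields a decay rate strictly above $2+2\bns$, which beats the weight $|y|^{2\bns}$; the outer mass $\pi\tau$ costs at most $2\tau$ in the exponent.
\item A matching lower bound, $w_n^*\ge -\frac{M_n^*}{2\pi}\log|y|+\ldots$, together with the outer mass being at most $\pi\tau$, bounds the normalizing constant $e^{\omega_n}(d_n/\bar\theta_n)^{2(1+\bns)}$.
\end{itemize}
Hence the mass in $\{|y|\ge R\}$ is at most $CR^{-\tau}$, while the mass in $\{1\le|y|\le R\}$ is $o(1)$ by concentration. This contradicts the $\pi\tau$ placed outside $B_1$ by construction.
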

\begin{proof}
Clearly, by the statement of Theorem \ref{quant:imp}, it is enough to rule out the case $\bar\epsilon_0>0$. Thus,
we can assume without loss of generality that $\frac14<\sg<\frac12$ and $8\pi<\lm_\ii=\frac{4\pi}{\sg}< 16\pi$.\\
Recalling the definitions \eqref{dn1}, \eqref{barwn:IB}, \eqref{eqwn:IB}, from the proof of Theorem \ref{quant:imp}
we can assume that \eqref{yy-convII.0} and \eqref{yy-convII} hold true and in particular that, as a straightforward consequence of
\eqref{estimate:vn+log-secondcase},
$$
\bar w_n(x)\leq C, \quad \forall |x|\geq 2.
$$
Let $\tau\in(0,1)$ be defined by the equality,
$$
\lm_\ii=8\pi(1+\tau),
$$
and then define $d_n$ as follows,
$$
\int_{B_{d_n}}\bar V_n e^{\bar w_n}\,dz =  8\pi +7\pi\tau.
$$
Clearly, since for any $R\geq 2$ we have
$$
\int_{B_{R}}\bar V_n e^{\bar w_n}\,dz= 8\pi+o(1),
$$
then
$$
d_n\to +\ii.
$$
On the other side, since $H_ne^{\dis v_n}\rightharpoonup \lm_\ii\dt_{p=0}$, then
$$
|x_n|d_n\to 0.
$$
Remark in particular that by construction we have,
\begin{align}\label{mass:II}
\int_{B_{d_n}^c}\bar V_n e^{\bar w_n}\,dz= \pi \tau +o(1).
\end{align}
Let
$$
w_n^*(y)=\bar w_n(d_ny)+2(1+\bns)\log(d_n), \quad |y|\leq \frac{r}{2|x_n|d_n},
$$
which satisfies
\begin{align}\label{eqwn:IB*}
\begin{cases}
-\D  w_{n}^*= V^*_{n}(y)e^{\dis w_{n}^*} \quad\text{in}\,\, |y|\leq \frac{r}{2|x_n|d_n},\\
V^*_{n}(y)=\left({\frac{\epsilon_n^2}{|x_{n}|^2d_n^2}+|y|^2}\right)^{\bns}K_n(|x_{n}|d_n y)\to
K(0)|y|^{2\bis}\;\mbox{in}\;C^t_{loc}(\R^2),\\
\int_{B_{\frac{r}{2|x_n|d_n}}}V^*_n e^{w_n^*}\,dz=\lm_\ii+o(1), \,\,\,\,\int_{B_{1}}V^*_n e^{w_n^*}\,dz= 8\pi+7\pi\tau+o(1), \\
\int_{B_{\frac{R}{d_n}}}V^*_n e^{w_n^*}\,dz= 8\pi+o(1),\quad \int_{B_{1}^c}V^*_n e^{w_n^*}\,dz= \pi\tau +o(1),\\
w_{n}^*(\tfrac{x_n}{|x_n|d_n})=\bar w_{n}(\tfrac{x_n}{|x_n|})+2(1+\bns)\log(d_n)\to +\infty,\qquad \tfrac{x_n}{|x_n|d_n}\to0.
\end{cases}
\end{align}
We show that, since $\tfrac{\epsilon_n}{|x_n|d_n}\leq\tfrac{C}{d_n}\to0$, then $w_n^*$ concentrates in the sense
of Theorem \ref{Concentration-Compactness alternative}. Observe first that the argument in the (Minimal Mass) Lemma \ref{minimalmasslemma} works as well
for $w_n^*$. Indeed, defining $\beta^*$ as follows,
\[
\beta^*=\lim_{\delta\to0}\,\liminf_{n\to+\infty}\int_{B_\delta}V_n^*e^{w_n^*}
\]
then we can prove that $\beta^*\geq 8\pi$. Moreover, since $\bis=1$ and $\beta^*\geq8\pi$, we can apply Theorem 2.1 in
\cite{os} and deduce that $w_n^*\to-\infty$ uniformly on compact sets
far away from the blow up point $y=0$.
Furthermore, by Lemma 2.2 in \cite{bclt}, $w_n^*$ satisfies
\[
\sup_{\p B_R} w_n^*-\inf_{\p B_R} w_n^*\leq C_R.
\]
At this point, we prove that
\begin{equation}
    \label{wn*bounded}
    w_n^*(y)\leq C, \quad \forall |y|\geq 2.
\end{equation}
Assume by contradiction that there exists a sequence of points $y_n^*$ such that
$$2\leq|y_n^*|\leq\tfrac{r}{2|x_n|d_n}\quad\text{and}\quad w_n^*(y_n^*)\to+\infty.$$
If $y_n^*\to y^*$, for some $|y^*|\geq 2$, then by the result in \cite{yy},
\[
\int_{B_\delta(y^*)}V_n^*e^{w_n^*}=8\pi+o(1),
\]
for $\delta>0$ small enough. Therefore,
\[
\lm_\ii+o(1)=\int_{B_{\frac{r}{2|x_n|d_n}}}V^*_n e^{w_n^*}\,dz\geq\int_{B_{1}}V^*_n e^{w_n^*}\,dz+
\int_{B_{\delta}(y^*)}V^*_n e^{w_n^*}\,dz=8\pi+7\pi\tau+8\pi+o(1),
\]
which is a contradiction, since $\lm_\ii<16\pi$. Therefore we have $|y_n^*|\to+\infty$. Let $z_{n}^*=y_n^*d_n$ and define
\[
w_{n,1}^*(z)=\bar w_n(|z_{n}^*|z)+2(1+\bns)\log|z_{n}^*|.
\]
Then, $w_{n,1}^*$ satisfies
\begin{align*}
\begin{cases}
-\D  w_{n,1}^*= V^*_{n,1}(z)e^{\dis w_{n,1}^*} \quad\text{in}\quad |z|\leq \frac{r}{|x_n||z_n^*|},\\
V^*_{n,1}(y)=\left({\frac{\epsilon_n^2}{|x_{n}|^2|z_n^*|^2}+|z|^2}\right)^{\bns}K_n(|x_{n}||z_n^*| z),\\
\int_{B_{\frac{r}{|x_n||z_n^*|}}}V^*_{n,1} e^{w_{n,1}^*}\,dz=\lm_\ii+o(1), \\
w_{n,1}^*(\tfrac{z_n^*}{|z_n^*|})=\bar w_{n}(d_ny_n^*)+2(1+\bns)\log(d_n|y_n^*|)=w_n^*(y_n^*)+2(1+\bns)\log(|y_n^*|)\to +\infty,
\end{cases}
\end{align*}
where $|x_n||z_n^*|\leq\tfrac{r}{2}$, $\tfrac{\epsilon_n}{|x_n||z_n^*|}\leq\tfrac{C}{|z_n^*|}\to0$ and $\tfrac{z_n^*}{|z_n^*|}\to\xi^*$, $|\xi^*|=1$. Also, by Lemma 2.2 in \cite{bclt}, $w_{n,1}^*$ satisfies
\[
\sup_{\p B_R} w_{n,1}^*-\inf_{\p B_R} w_{n,1}^*\leq C_R.
\]
Hence, by the result in \cite{yy}, we have that
\[
\int_{B_\delta(\xi^*)}V_{n,1}^*e^{w_{n,1}^*}=8\pi+o(1),
\]
for $\delta>0$ small enough. Also,
\[
\int_{B_{\frac{d_n}{|z_n^*|}}(0)}V_{n,1}^*e^{w_{n,1}^*}=\int_{B_{d_n}(0)}\bar V_ne^{\bar w_n}=\int_{B_{1}}V^*_n e^{w_n^*}\,dz=8\pi+7\pi\tau+o(1),
\]
with $\tfrac{d_n}{|z_n^*|}=\tfrac{1}{|x_n^*|}\to0$ and $B_{\delta}(0)\cap B_{\frac{d_n}{|z_n^*|}}(0)=\emptyset$. Therefore,
\[
\lm_\ii+o(1)=\int_{B_{\frac{r}{2|x_n||z_n^*|}}}V^*_{n,1} e^{w_{n,1}^*}\,dz\geq\int_{B_{\frac{d_n}{|z_n^*|}}(0)}V^*_{n,1} e^{w_{n,1}^*}\,dz+\int_{B_{\delta}}V^*_{n,1} e^{w_{n,1}^*}\,dz=8\pi+7\pi\tau+8\pi+o(1),
\]
which is a contradiction since $\lm_\ii<16\pi$.

Hence, we have proved that \eqref{wn*bounded} holds and then, in particular, we have
\begin{equation*}
V^*_ne^{\dis w_{n}^*}\rightharpoonup (8\pi+7\pi\tau)\dt_{p=0}, \quad \mbox{in}\quad B_1,
\end{equation*}
and
\begin{equation}\label{conc:wnstar}
V^*_ne^{\dis w_{n}^*}\rightharpoonup (8\pi+7\pi\tau)\dt_{p=0},\quad \mbox{in}\quad B_R,
\end{equation}
for any $R\geq 1$. We will need the following estimates.
\ble\label{lem:barwn}
Let us define,
$$
\omega_n=\tfrac{1}{2\pi}\int_{B_{1}}\log({|\xi-\xi_n|})V^*_n e^{w_n^*}\,d\xi,
$$
where $\xi_n=\tfrac{z_0}{d_n}$, for some $|z_0|=2$. Then we have
$$
w_n^*(y)\leq  \omega_n-2(1+\bns)\log\left(\frac{\bar \theta_n}{d_n}\right)+(4+\tau)\log\frac{1}{|y|}+ O(1), \quad \forall\, |y|\geq 2;
$$
$$
e^{\omega_n}\left(\frac{d_n}{\bar \theta_n}\right)^{2(1+\bns)}\leq C.
$$
\ele
For the ease of the presentation, we postpone the proof of Lemma \ref{lem:barwn} to subsection \ref{sec:barwnstar} below. By using the Lemma we can conclude the proof.
In fact, by using \eqref{conc:wnstar} and the Lemma we have, for any $R\geq2$,
$$
\pi \tau+o(1)=\int\limits_{1\leq |\xi|\leq R}V^*_{n}e^{\dis w_{n}^*}\,d\xi + \int\limits_{|\xi|\geq R}V^*_{n}e^{\dis w_{n}^*}\,d\xi=
o(1)+\int\limits_{|\xi|\geq R}V^*_{n}e^{\dis w_{n}^*}\,d\xi\leq
$$
$$
 o(1)+Ce^{\omega_n}\left(\frac{d_n}{\bar \theta_n}\right)^{2(1+\bns)}
\int\limits_{|\xi|\geq R}\frac{|\xi|^{2\bns}}{|\xi|^{4+\tau}}\,d\xi\leq
o(1)+Ce^{\omega_n}\left(\frac{d_n}{\bar \theta_n}\right)^{2(1+\bns)}\int\limits_{R}^{+\ii}\frac{dt}{t^{1+\tau}}=o(1)+\frac{C}{R^\tau},
$$
which is the desired contradiction.
\end{proof}

\smallskip

\section{``Slow" blow up: Harnack-type inequalities and a quantization Lemma for CASE (III).}\label{sec5}
As mentioned in the introduction, due to the fact that $\bis=1$, the decay obtained via the Harnack-type estimates in Theorem \ref{supinfI} is
not enough to come up with the mass-quantization in CASE (III). A solution to this issue comes from a careful adaptation of an argument in \cite{CLin0} (see also \cite{cosentino2025}).
\begin{lemma}\label{lemmasup+Cinfspecialcase}
Let $u_n$ be a sequence of solutions of
\begin{equation*}\label{equationsup+Cinfalpha=1}
\begin{cases}
    -\D u_n=\left({\varepsilon_n^2+|x|^2}\right)^{\alpha_n}V_n(x)e^{\displaystyle u_n} \qquad\text{in} \,\,\, B_1,\\
    \int_{B_1}\left({\varepsilon_n^2+|x|^2}\right)^{\alpha_n}e^{\displaystyle u_n}\leq C
\end{cases}
\end{equation*}
with $\varepsilon_n\to0^+$,
$\alpha_n\to \alpha_\infty\in(0,1]$ and $V_n$ satisfying
\begin{equation*}\label{Vn0sup+Cinfalpha=1}
 V_n\geq 0, \quad V_n\in C^{0}(\overline B_1), \quad V_n\to V \,\, \text{uniformly in}\,\,\overline{B_1}.
\end{equation*}
Let $K\subset B_1$ any compact set such that $0\in K$ and let $x_n$ the maximum point of $u_n$ in $K$. Assume that
\begin{equation*}
\label{blowupin0}
x_n\to0 \qquad\text{and}\qquad\sup_K\,u_n=u_n(x_n)\to+\infty.
\end{equation*}
Let us define
\[
\delta_n=e^{-\frac{u_n(x_n)}{2(1+\alpha_n)}}\to0^+
\]
and assume that,
\[
\frac{\varepsilon_n}{\delta_n}\to\epsilon_0\qquad \text{and}\qquad \frac{x_n}{\delta_n}\to y_0,
\]
for some $\epsilon_0\geq0$ and $y_0\in\R^2$. Let $0<d_0\leq\min\left\{\tfrac{1}{8},\tfrac{1}{4}\left(\tfrac{\widetilde\beta}{8\pi}-1\right)\right\}$, where $\widetilde \beta$ is defined in Remark \ref{remarksup+Cinf}.
Then,
\begin{equation*}
 \label{CaseIIImultiplebubbling:sup+Cinf}
 \underset{K}\sup\, u_n + C_1\,\underset{B_1}\inf\, u_n\leq 0,
\end{equation*}
for every $C_1\geq\left(1+\tfrac{2}{d_0}\right)$.
\end{lemma}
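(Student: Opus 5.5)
We follow the Chen--Lin strategy of \cite{CLin0}, adapted to the weight $(\varepsilon_n^2+|x|^2)^{\alpha_n}$. The statement is presumably meant for $n$ large, i.e.\ along the sequence.

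\textbf{Step 1: rescaling.} Set
$$\widetilde u_n(y)=u_n(\delta_n y)+2(1+\alpha_n)\log\delta_n=u_n(\delta_n y)-u_n(x_n).$$
As in \eqref{equationgoodcase}, $\widetilde u_n\le 0$ and $\widetilde u_n\to\widetilde w$ in $C^2_{loc}(\R^2)$, where $\widetilde w$ solves \eqref{profile:tildew} with $\widetilde V=V(0)(\epsilon_0^2+|y|^2)^{\alpha_\infty}$. The total mass $\widetilde\beta$ of the limit is strictly larger than $8\pi$ (Remark \ref{remarksup+Cinf}; compare section \ref{sec:planar} and \cite{cl2}), so that
$$\widetilde w(y)=-\tfrac{\widetilde\beta}{2\pi}\log|y|+O(1),\qquad \tfrac{\widetilde\beta}{2\pi}=4(1+4d)\ \text{ for some } d\ge d_0 .$$
Hence we may fix $R_0$ large with $\int_{B_{R_0}}\widetilde V e^{\widetilde w}\ge 8\pi(1+2d_0)$. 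By $C^2_{loc}$ convergence the same holds for $\widetilde u_n$ up to $o(1)$, and $\widetilde u_n\ge -C$ on $B_{R_0}$. In particular
$$\int_{B_{R_0\delta_n}}(\varepsilon_n^2+|x|^2)^{\alpha_n}V_ne^{u_n}\ge 8\pi(1+2d_0)-o(1).$$

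\textbf{Step 2: superharmonic decay of $u_n$ outside the core.} On $B_1\setminus B_{R_0\delta_n}$ compare $u_n$ with the Green-type potential produced by the mass inside the core. Write $u_n=\psi_n+h_n$ on $B_1$, where $\psi_n$ is the Green potential of $-\Delta u_n$ with zero boundary data and $h_n$ is harmonic with $h_n=u_n$ on $\partial B_1$, so $h_n\ge \inf_{\partial B_1}u_n\ge\inf_{B_1}u_n$. Since the right-hand side is nonnegative,
$$\psi_n(x)\ge \tfrac{1}{2\pi}\int_{B_{R_0\delta_n}}\log\tfrac{1}{|x-z|}\,f_n(z)\,dz-C,$$
where $f_n$ is the right-hand side of the equation. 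This yields, for $2R_0\delta_n\le|x|\le\frac12$,
$$u_n(x)\ge \inf_{B_1}u_n+4(1+2d_0)\log\tfrac{1}{|x|}-C .$$
Evaluating at $|x|=2R_0\delta_n$ and using the Harnack-type control from Step 1 on the annulus $R_0\le|y|\le2R_0$, where $\widetilde u_n=O(1)$, gives
$$u_n(x_n)-C\ge \inf_{B_1}u_n+4(1+2d_0)\bigl(-\log\delta_n\bigr)-C=\inf_{B_1}u_n+\tfrac{2(1+2d_0)}{1+\alpha_n}\,u_n(x_n)-C.$$

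\textbf{Step 3: algebra.} Since $\alpha_n\le1+o(1)$, we have $\frac{2(1+2d_0)}{1+\alpha_n}\ge 1+2d_0-o(1)$, so the last inequality becomes
$$\inf_{B_1}u_n\le -\bigl(2d_0-o(1)\bigr)u_n(x_n)+C,$$
that is,
$$u_n(x_n)+\tfrac{1}{2d_0-o(1)}\inf_{B_1}u_n\le \tfrac{C}{2d_0}.$$
The coefficient of $\inf_{B_1}u_n$ here is only about $\frac{1}{2d_0}$, and we need $C_1\ge 1+\frac{2}{d_0}$. The margin between the two absorbs $o(1)$ errors. It also absorbs the additive constant: in the relevant case $\inf_{B_1}u_n\to-\infty$ (the inequality $\inf_{B_1}u_n\le-2d_0u_n(x_n)+C$ forces this), so
$$C_1\inf_{B_1}u_n\le \tfrac{1}{2d_0}\inf_{B_1}u_n+\Bigl(C_1-\tfrac{1}{2d_0}\Bigr)\inf_{B_1}u_n\le -u_n(x_n)+\tfrac{C}{2d_0}-\tfrac{3}{2d_0}\bigl|\inf_{B_1}u_n\bigr|,$$
and the last term eventually beats the additive constant. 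This gives $\sup_K u_n+C_1\inf_{B_1}u_n\le0$ for $n$ large. The restriction $d_0\le\frac18$ keeps $1+2d_0$ below the thresholds used when checking Step 2's mass bound.

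\textbf{Main obstacle.} The delicate point is Step 2 when $\alpha_\infty=1$. There the weight grows like $|x|^{2}$ and no Harnack decay of the type in Theorem \ref{supinfI} is available. The lower bound must therefore come purely from positivity of the Green potential and the \emph{strict} mass excess $\widetilde\beta>8\pi$ captured inside $B_{R_0\delta_n}$, and not from any decay estimate. One must check carefully that the boundary harmonic part and the points $x$ near $\partial B_1$ (where $\log\frac1{|x-z|}$ degenerates) are handled. We would restrict to $|x|\le\frac12$, and then use that $\inf_{B_1}u_n$ is attained on $\partial B_1$ by superharmonicity, comparing through the annulus $\frac12\le|x|\le1$ via the maximum principle.
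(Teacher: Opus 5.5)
Your proposal is correct and follows essentially the paper's argument. In both, a Green representation is used in which the harmonic part is bounded below by $\inf_{B_1}u_n$, and the Green potential is bounded below by the core mass $\widetilde\beta>8\pi(1+4d_0)$ captured at scale $\delta_n$. The difference is technical. The paper evaluates at $x_n$ itself with the Green function of $B_\rho(x_n)$ and tames the logarithm by restricting to $|z-\tfrac{x_n}{\delta_n}|\le l\le\rho\,\delta_n^{-d_0/(1-d_0)}$, which gives the inequality with no additive constant. You instead evaluate on $|x|=2R_0\delta_n$, where $u_n=u_n(x_n)+O(1)$, and absorb the resulting additive constant (and $o(1)\,u_n(x_n)$ errors) through the strict margin $C_1>\frac{1}{2d_0}$ and $\inf_{B_1}u_n\to-\infty$.
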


\begin{remark}
    \label{remarksup+Cinf}
Under the assumption of Lemma \ref{lemmasup+Cinfspecialcase}, let us define
\[
\widetilde u_n(z)=u_n(\delta_nz)+2(1+\alpha_n)\log\delta_n,\qquad z\in B_{\tfrac{1}{\delta_n}}(0).
\]
Then, $\widetilde u_n$ satisfies
\begin{align}\label{eqtildeun}
\begin{cases}
-\D \widetilde u_n= (\tfrac{\varepsilon_n^2}{\delta_n^2}+|z|^2)^{\alpha_n}V_n(\delta_nz)e^{\dis \widetilde u_n}
\quad\text{in}\quad B_{\tfrac{1}{\delta_n}}(0),\\
\int_{B_{\frac{1}{\delta_n}}(0)}(\tfrac{\varepsilon_n^2}{\delta_n^2}+|z|^2)^{\alpha_n}e^{\dis \widetilde u_n}\leq C\\
\widetilde u_n(z)\leq\widetilde u_n(\tfrac{x_n}{\delta_n})=\underset{B_{\frac{1}{\delta_n}}(0)}{\max}\widetilde u_n=0,\qquad \forall z\in B_{\tfrac{1}{\delta_n}}(0).
\end{cases}
\end{align}

By the regular Concentration-Compactness alternative \cite{bm} (in the case $\epsilon_0\neq0$) or by Theorem \eqref{Concentration-Compactness alternative} (in the case $\epsilon_0=0$) and by standard elliptic estimates, we easily conclude that $\widetilde w_n$ is uniformly
bounded in $C^{t}_{\rm loc}(\R^2)$, for some $t\in(0,1)$ and then, possibly along a subsequence, $\widetilde w_n\to \widetilde w$
uniformly on compact sets of $\R^2$, where $\widetilde w$ satisfies
\begin{align*}
\begin{cases}
 -\D \widetilde w=(\epsilon_0^2+|z|^2)^{\alpha_\infty} V(0)e^{\dis \widetilde w} \quad\text{in}\quad\R^2 \\
\int_{\R^2}(\epsilon_0^2+|z|^2)^{\alpha_\infty}e^{\dis \widetilde w}\leq C, \\
\widetilde w(z)\leq\widetilde w(y_0)=0, \qquad\forall z\in\R^2.
\end{cases}
\end{align*}
Since $\widetilde w$ is bounded from above, then necessarily $V(0)\neq0$. At this point, see Lemma \ref{massstrange}, we have that,
\begin{align*}
\widetilde\beta:=\underset{\R^2}\int(\epsilon_0^2+|z|^2)V(0)e^{\dis \widetilde w}> \max\{8\pi,4\pi(1+\alpha_\infty)\}= 8\pi.
\end{align*}

\end{remark}

{\it The Proof of Lemma \ref{lemmasup+Cinfspecialcase}.}\\
We argue by contradiction and, by fixing $C_1\geq\left(1+\tfrac{2}{d_0}\right)$, we assume that
\[
u_n(x_n)+C_1\inf_{B_1}u_n>0.
\]
Let us set $\rho>0$ such that $B_\rho(x_n)\subset B_1(0)$. By the Green representation formula for $u_n$ on $B_\rho(x_n)$, we have that,
\begin{align}
\nonumber
u_n(x_n)&=\int\limits_{B_{\rho}(x_n)}G_{B_{\rho}(x_n)}(x_n,y)\left({\varepsilon_n^2+|y|^2}\right)^{\alpha_n}V_n(y)e^{\displaystyle u_n}\,dy+\underbrace{\int\limits_{|y-x_n|=\rho}-\frac{\p G_{B_{\rho}(x_n)}}{\p r}(x_n,y)\,u_n(y)\,d\sigma(y)}_{p_n}\\
&=\int_{B_{\frac{\rho}{\delta_n}}(\frac{x_n}{\delta_n})}G_{B_{{\rho}}(x_n)}(x_n,\delta_nz)\left({\tfrac{\varepsilon_n^2}{\delta_n^2}+|z|^2}\right)^{\alpha_n}V_n(\delta_nz)e^{\displaystyle \widetilde u_n}\,dz+p_n \label{Greenrepresentationun}
\end{align}
where $\widetilde u_n$ is defined in Remark \ref{remarksup+Cinf} and,
putting
 $\bar x=x_n+\tfrac{\rho^2}{|x-x_n|^2}(x-x_n)$,
\begin{align*}
    G_{B_{\rho}(x_n)}(x,y)=\begin{cases}
        -\tfrac{1}{2\pi}\log|x-y|+\tfrac{1}{2\pi}\log|\tfrac{\bar x-y}{\rho}|,\,\,&x\neq x_n, \\
        -\tfrac{1}{2\pi}\log\tfrac{|x_n-y|}{\rho},\,\,&x=x_n,
    \end{cases}
\end{align*}
\begin{align*}
    \frac{\p G_{B_{\rho}(x_n)}}{\p r}(x,y)=\frac{1}{2\pi\rho}\frac{|x-x_n|^2-\rho^2}{|x-y|^2}.
\end{align*}
Now, for a fixed $0<d_0\leq\min\left\{\tfrac{1}{8},\tfrac{1}{4}\left(\tfrac{\widetilde\beta}{8\pi}-1\right)\right\}$, there exist $L>0$ and $N_1\in\N$ large enough, such that for every $l\geq L$ and $n\geq N_1$ we have that
\[
\underset{|y-\frac{x_n}{\delta_n}|\leq l}\int(\tfrac{\varepsilon_n^2}{\delta_n^2}+|z|^2) V_n(\delta_n z)e^{\dis \widetilde u_n}\geq \widetilde\beta(1-d_0).
\]
Moreover, there exists $N_2\in \N$ such that, for every $n\geq N_2$,
\[
\rho(\tfrac{1}{\delta_n})^{\frac{d_0}{1-d_0}}\in(L,\tfrac{\rho}{\delta_n}).
\]
Then, for every $l\in(L,\rho(\tfrac{1}{\delta_n})^{\frac{d_0}{1-d_0}})$, we deduce that
\[
|z-\tfrac{x_n}{\delta_n}|\leq l \implies -\frac{1}{2\pi}\log\left(\frac{|\tfrac{x_n}{\delta_n}-z|}{\rho}\right)\geq-\frac{d_0}{1-d_0}\frac{u_n(x_n)}{4\pi(1+\alpha_n)}.
\]
Then, by \eqref{Greenrepresentationun} and taking $l\in(L,\rho(\tfrac{1}{\delta_n})^{\frac{d_0}{1-d_0}})$ and $n\ge\max\{N_1,N_2\}$, we have that
\begin{align*}
u_n(x_n)&=-\tfrac{1}{2\pi}\int_{B_{\frac{\rho}{\delta_n}}(\frac{x_n}{\delta_n})}\log\left(\frac{|x_n-\delta_n z|}{\rho}\right)(\tfrac{\varepsilon_n^2}{\delta_n^2}+|z|^2) V_n(\delta_n z)e^{\dis \widetilde u_n}\,dz+p_n\\
&= \int_{B_{\frac{\rho}{\delta_n}}(\frac{x_n}{\delta_n})}\left(\frac{u_n(x_n)}{4\pi(1+\alpha_n)}-\frac{1}{2\pi}\log\left(\frac{|\tfrac{x_n}{\delta_n}-z|}{\rho}\right)\right)(\tfrac{\varepsilon_n^2}{\delta_n^2}+|z|^2) V_n(\delta_n z)e^{\dis \widetilde u_n}\,dz+p_n\\
&\geq \left(1-\frac{d_0}{1-d_0}\right)\frac{u_n(x_n)}{4\pi(1+\alpha_n)}\int_{B_{l}(\frac{x_n}{\delta_n})}(\tfrac{\varepsilon_n^2}{\delta_n^2}+|z|^2) V_n(\delta_n z)e^{\dis \widetilde u_n}+p_n\\
&\geq \left(1-2d_0\right)\frac{\widetilde\beta}{4\pi(1+\alpha_n)}u_n(x_n)+p_n\\
&\geq \left(1-2d_0\right)\frac{\widetilde\beta}{8\pi}u_n(x_n)+p_n\\
&\geq \left(1-2d_0\right)(1+4d_0)u_n(x_n)+p_n\\
&\geq (1+\tfrac{d_0}{2+d_0})u_n(x_n)+p_n\\
&\geq (1+\tfrac{1}{C_1})u_n(x_n)+p_n.
\end{align*}
This, together with the fact that $p_n\geq\inf_{B_1}u_n$, implies
\[
\tfrac{1}{C_1}\, u_n(x_n)+\inf_{B_1}u_n\leq0,
\]
that is
\[
u_n(x_n)+C_1\inf_{B_1}u_n\leq0,
\]
which is our contradiction.
\finedim

\bigskip

In view of Lemma \ref{lemmasup+Cinfspecialcase}, we can prove the following mass-quantization result as in \cite{ls}. The proof goes along the same argument
in \cite{ls}, whence, also for the ease of the presentation, we postpone it to subsection \ref{quant:III} below.
\begin{lemma}[A quantization Lemma for CASE (III)] \label{LS-quant} Let $v_n$ be a sequence of solutions of \eqref{eqbase}, \eqref{lambdan}, where
$K_n$ satisfies \eqref{K+} and
$$\lm_n\leq\tfrac{4\pi}{\sigma},\qquad\lambda_\ii=\tfrac{4\pi}{\sg}\qquad \text{and}\qquad\sigma\in(\tfrac14,\tfrac12).$$
Assume that \eqref{bounded} and \eqref{explosion}
are satisfied and that  there exists $d_n\to 0^+$ with the following properties:\\
$$
\frac{\eps_n}{\delta_n}\leq C,\qquad \frac{|x_n|}{\dt_n}\leq C,\qquad \frac{\dt_n}{d_n}\to 0,
$$
$$
\int\limits_{B_{4d_n}}H_n e^{\dis v_{n}}\to \beta> 8\pi,
$$
and
\begin{equation}
\label{hypols-lem}
v_n(x)+2(1+\al_{n,\sg})\log(|x|)\leq C, \;\forall\, 4d_n\leq|x|\leq 1,
\end{equation}
for some $C>0$. Then
$$
\int\limits_{B_{1}\setminus B_{4d_n}}H_n e^{\dis v_{n}}\to 0\quad\mbox{and}\quad
\lm_n\to \lm_\ii=\beta.
$$
\end{lemma}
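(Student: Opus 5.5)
The plan follows the Li--Shafrir argument in \cite{ls}. The decay needed to control the tail mass will come from the ``Sup+CInf'' inequality of Lemma \ref{lemmasup+Cinfspecialcase}, applied to rescalings of $v_n$ on dyadic annuli. The Harnack bounds of Theorem \ref{supinfI} cannot play this role: at $\bns=1$ they become trivial.

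\textbf{Step 1: comparability on annuli.} For $s\in[4d_n,\tfrac12]$ set $\hat v_n(y)=v_n(sy)+2(1+\bns)\log s$ on $B_2\setminus B_{1/2}$. This function satisfies $-\Delta\hat v_n=(\tfrac{\eps_n^2}{s^2}+|y|^2)^{\bns}K_n(sy)e^{\hat v_n}$. By \eqref{hypols-lem} the right-hand side is bounded in $L^\infty$. Exactly as in the proof of Theorem \ref{quant:imp} (Green function plus Harnack inequality), this gives
$$\sup_{\partial B_s}v_n\le \inf_{\partial B_s}v_n+C.$$
It also gives the gradient control needed to compare $v_n$ on nearby circles.

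\textbf{Step 2: a radial decay estimate beyond $-2(1+\bns)\log|x|$.} For $4d_n\le s\le 1$, rescale instead by $\delta_n$-type data. Set $u_n(z)=v_n(sz)+2(1+\bns)\log s$ on $B_1$. Its blow-up profile at scale $\delta_n/s$ is the same radial solution $\widetilde w$ of \eqref{profile:tildew}, with $\eps_n/\delta_n\to\epsilon_0$. Remark \ref{remarksup+Cinf} then gives $\widetilde\beta>8\pi$. Applying Lemma \ref{lemmasup+Cinfspecialcase} with a fixed $C_1=1+2/d_0$ yields
$$\inf_{\partial B_s}v_n\le -\tfrac{1}{C_1}v_n(x_n)-2(1+\bns)\big(1+\tfrac{1}{C_1}\big)\log s+C.$$
Combined with Step 1, for $4d_n\le|x|\le 1$ this becomes
$$v_n(x)\le -2(1+\bns)\log|x|-\tfrac{1}{C_1}\Big(v_n(x_n)+2(1+\bns)\log|x|\Big)+C.$$
Writing $v_n(x_n)=-2(1+\bns)\log\delta_n$, the correction term equals $-\tfrac{2(1+\bns)}{C_1}\log(|x|/\delta_n)$. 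So we gain a factor $(\delta_n/|x|)^{\eta}$ with $\eta=2(1+\bns)/C_1>0$ independent of $n$.

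\textbf{Step 3: integration of the tail.} Since $|x|\ge 4d_n\gg\eps_n$, we have $H_n\le C|x|^{2\bns}$. Therefore
$$\int_{B_1\setminus B_{4d_n}}H_ne^{v_n}\le C\int_{4d_n}^1 r^{2\bns}\,r^{-2(1+\bns)}\Big(\tfrac{\delta_n}{r}\Big)^{\eta}r\,dr\le C\Big(\tfrac{\delta_n}{d_n}\Big)^{\eta}\to0.$$
This is exactly where the logarithmic divergence of the unimproved bound is removed. For the remaining region, \eqref{bounded} together with Theorem \ref{Concentration} gives $\int_{B_2\setminus B_1}H_ne^{v_n}\to0$. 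Hence $\lm_n=\int_{B_{4d_n}}H_ne^{v_n}+o(1)\to\beta$, and so $\lm_\ii=\beta$.

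\textbf{Main obstacle.} The hard part is Step 2: applying Lemma \ref{lemmasup+Cinfspecialcase} uniformly in $s\in[4d_n,1]$. We must check that each rescaled sequence satisfies the lemma's hypotheses with uniform constants. Concretely, the hypotheses are the bounded mass, the ratio $\eps_n/\delta_n$ still converging, and the maximum point $x_n/s$ at the correct scale. We also need the threshold $d_0$ to be fixed, which requires $\widetilde\beta$ to be independent of $s$.

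I would handle this by contradiction. Suppose the estimate fails along some sequence $s_n$. Then apply the lemma, whose proof is only a Green-representation bound on $B_\rho(x_n)$, directly to the sequence $v_n(s_n\cdot)+2(1+\bns)\log s_n$ on $B_1$. Its profile is the same $\widetilde w$, because $\delta_n/s_n\to0$ and the local blow-up at scale $\delta_n$ does not depend on $s_n$. In that proof, the boundary term $p_n$ is bounded below by the infimum over $\partial B_{\rho s_n}$, and this yields the desired inequality.
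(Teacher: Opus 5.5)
Your route is the paper's own: Harnack on rescaled annuli, the Sup+CInf inequality of Lemma \ref{lemmasup+Cinfspecialcase} applied to $\hat v_n=v_n(s\cdot)+2(1+\bns)\log s$, and integration of the extra decay. The constant $\widetilde\beta$ is unchanged because $\hat\eps_n/\hat\delta_n=\eps_n/\delta_n$ and $\hat x_n/\hat\delta_n=x_n/\delta_n$. However, Step 1 is misstated, and the misstated version does not follow from the argument you cite.

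On the annulus you only know $\hat v_n\le C$ and $|\Delta\hat v_n|\le C$. So Harnack applies to the nonnegative harmonic function $\kappa_n-\hat v_n+C$, and it gives only the multiplicative form $\sup_{\partial B_1}\hat v_n\le\tau_0\inf_{\partial B_1}\hat v_n+C$ with a universal $\tau_0\in(0,1)$. Equivalently, $\sup_{\partial B_s}v_n\le\tau_0\inf_{\partial B_s}v_n-2(1-\tau_0)(1+\bns)\log s+C$, which is exactly what the proof of Theorem \ref{quant:imp} establishes. Since $\inf_{\partial B_1}\hat v_n\to-\infty$ (this is what your Step 2 shows), this still allows an oscillation of order $(1-\tau_0)|\inf_{\partial B_1}\hat v_n|$. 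Nothing in the lemma's hypotheses yields the additive bound $\sup_{\partial B_s}v_n\le\inf_{\partial B_s}v_n+C$; in particular \eqref{oscillation} is not assumed here.

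The repair is immediate. Insert your Sup+CInf bound into the $\tau_0$-version to get $v_n(x)\le-2(1+\bns)\log|x|-\eta\log(|x|/\delta_n)+C$ with $\eta=2(1+\bns)\tau_0/C_1$ rather than $2(1+\bns)/C_1$. This is precisely the paper's exponent $2(1+\bns)\tau_0/C_0$, and your Step 3 then goes through verbatim.

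Your contradiction argument for uniformity in $s$ is valid, and it is more explicit than the paper, which simply asserts $s$-independent constants. You can even see the uniformity directly. The rescaled profile at scale $\hat\delta_n=\delta_n/s$ is literally $\widetilde w_n$, so the thresholds $L,N_1$ in the lemma's proof do not depend on $s$. Moreover $\hat\delta_n\le\delta_n/(4d_n)\to0$ and $|\hat x_n|\to0$ uniformly for $s\ge4d_n$, so $N_2$ and $\rho$ can also be chosen independently of $s$.
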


\bigskip
\bigskip

\section{The proof of Theorem \ref{thm:13}}
In this section we present the proof of Theorem \ref{thm:13}.\\
{\it The proof of Theorem \ref{thm:13}.} Clearly all the assumptions of Theorems \ref{Concentration} and \ref{Concentration-Compactness alternative}
are satisfied, whence in particular $x=0$ is the unique blow up point in $B_1$, $0<\sg\leq \frac12$ and $\lm_\ii\geq 8\pi$.
In particular Theorem \ref{massboundarycontrol} can be applied and we also have $\lm_\ii\leq\min\left\{\frac{8\pi}{1-2\sg},\frac{4\pi}{\sg}\right\}$.
Therefore, since by assumption $\lm_\ii=\frac{4\pi}{\sg}$, we are just left with the case $\sg\in \left[\frac14,\frac12\right]$.\\
Assume first that \eqref{Hypothesis:CaseI} holds true, then by Theorem \ref{quant:imp} we have that
$$\text{either }\quad \sg=\frac14\,\,\,\text{and}\,\,\, \lm_\ii=16\pi\qquad \text{or}\qquad\sg=\frac12\,\,\, \text{and}\,\,\,\lm_\ii=8\pi.$$
Next, let us assume that \eqref{Hypothesis:CaseII} holds true, then from Theorem \ref{quant:imp} and Proposition \ref{caseII.quant} we see that
$$\sg=\frac14\,\,\,\text{and}\,\,\, \lm_\ii=16\pi.$$\\
At last we assume that \eqref{Hypothesis:CaseIII} holds true, whence along a subsequence we have,
\begin{equation*}
\frac{\eps_n}{\dt_n} \to \eps_0\geq 0, \quad \frac{x_n}{\dt_n} \to y_0\in \R^2.
\end{equation*}
Let $\widetilde{w}_n$ be defined as in \eqref{phicaseIA} which satisfies \eqref{equationgoodcase}.
Clearly the concentration-compactness alternative of regular Liouville type equations (\cite{bm}) can be applied to $\widetilde w_n$
and we readily deduce that $\widetilde w_n$ is uniformly bounded in  $L^{\infty}_{\rm loc}(\R^2)$.
Therefore, by standard elliptic estimates, possibly along a subsequence we have that $\widetilde w_n\to \widetilde w$ in $C^{2}_{\rm loc}(\R^2)$,
where $\widetilde w$ is a solution of \eqref{profile:tildew}. Since by assumption $\bis= 1$ and consequently
$\max\{8\pi,4\pi(1+\bis)\}=8\pi$, by Lemma \ref{massstrange} below, we have that
\begin{align*}
8\pi<\widetilde \beta \leq 8\pi(1+\bis)=16\pi.
\end{align*}
However, (see also section \ref{sec:planar}) since $\bis= 1$, it is well known (\cite{GM1}, \cite{Lin1}) that either $\eps_0>0$ and then $8\pi<\widetilde \beta <16\pi$ and
$\widetilde w$ is unique, radially symmetric and non degenerate, or $\eps_0=0$ and then $\widetilde \beta =16\pi$ and
$\widetilde w$ can be either radially symmetric or non radial with two maximum points (see \cite{pt}). Therefore there are only two possibilities:\\
either $\sg=\frac14$ and $\lm_\ii=16\pi$ or $\sg\in (\frac14, \frac12)$ and $\lm_\ii=\frac{4\pi}{\sg}\in (8\pi,16\pi)$. Assume we are in the latter case.
Therefore we readily deduce that there exists $\mu>0$ and a sequence $R_n\to +\ii$ such that $R_n\delta_n\to0^+$,
\[
\|\tilde w_n - \tilde w\|_{C^{2}(B_{4R_n})}\to 0,
\]
\begin{equation}\label{masstilde0}
\lim\limits_{n\to +\ii}\int_{B_{4R_n}}\widetilde V_n e^{\dis \widetilde w_n} =\lim\limits_{n\to +\ii}\int_{B_{2R_n}}
\widetilde V_n e^{\dis \widetilde w_n}= \tilde \beta\geq 8\pi+2\mu
\end{equation}
and
\begin{equation*}
\int_{D_n\setminus B_{4 R_n}}\widetilde V_n e^{\dis \widetilde w_n}\leq
\int_{D_n\setminus B_{2 R_n}}\widetilde V_n e^{\dis \widetilde w_n} \leq 8\pi-\mu.
\end{equation*}
We show that there exist $C_{r}>0$ such that
\begin{equation}
\label{hypolsII-B}
 \sup_{B_{2r}\backslash B_{2R_n\dt_n}}\{v_n(z)+2(1+\bns)\log |z|\}\leq C_{r}.
\end{equation}
By contradiction, assume that there exists a sequence $z_n$ such that
\[
 v_n(z_n)+2(1+\bns)\log |z_n|\to+\infty, \,\,\,\,\,\text{as}\,\,n\to+\infty,
\]
\[
 \mbox{and }\quad 2 R_n \dt_n\leq |z_n|\leq 2r.
\]

Then, let us define
\[
 \tilde v_n(z)= v_n(|z_n|z)+2(1+\bns)\log |z_n|,
\]
which satisfies
\begin{align*}
\begin{cases}
-\D  \tilde v_{n}= \tilde V_{n,1}(z)e^{\dis \tilde v_{n}} \qquad\text{in}\,\,B_{\frac{r}{|z_{n}|}}(0),\\
 \tilde V_{n,1}(z)=\left({\frac{\epsilon_n^2}{|z_{n}|^2}+|z|^2}\right)^{\bns}K_n(|z_n|z)
\to c_0|z|^{2\bis}\;\mbox{in}\;C^t_{loc}(\R^2),\\
\int_{B_{\frac{r}{|z_n|}}} \tilde V_{n,1} e^{\dis  \tilde v_{n}}= \lm_n +o(1) = \lm_\ii +o(1),\\
 \tilde v_{n}(\tfrac{z_n}{|z_n|})=v_n(z_n)+2(1+\bns)\log |z_n|\to+\infty,
\end{cases}
\end{align*}
where we used $c_0=K(0)$ and
\[
 \tfrac{\epsilon_n}{|z_n|}=\tfrac{\epsilon_n}{\dt_n}\tfrac{\dt_n}{|z_n|}\leq\tfrac{C}{2 R_n}\to 0.
\]
Possibly along a subsequence we can assume that $\tfrac{z_n}{|z_n|}\to \tilde z,\;|\tilde z|=1$ and then, as in Remark \ref{rem:reg},
the concentration-compactness theory (\cite{bm},\cite{yy}) for regular Liouville type equations applies to $\tilde v_{n}$ in $B_R(\tilde z)$ for any
$R\leq \frac12$ and in particular we have that,
\begin{equation*}
  \underset{\p B_R(\tilde z)}\max\tilde v_n-\underset{\p B_R(\tilde z)}\min\tilde v_n\leq C_R,\quad \forall \, R\leq\tfrac{1}{2},
\end{equation*}
whence $\tilde z$ is a simple blow up point (\cite{yy}), i.e.
$$
\int\limits_{B_R(\tilde z)} \tilde V_{n,1} e^{\tilde v_n}\to 8\pi,
$$
for any $R\leq \frac12$. Remark that for any $\rho\leq \frac14$ we have,
$$
\mbox{dist}(B_{R_n\dt_n}(x_n),B_{\rho|z_n|}(z_n))\geq R_n\dt_n
\left(\frac{|z_n|}{R_n |\dt_n|}(1-\rho)-(1+\frac{|x_n|}{R_n\dt_n})\right)\geq
$$
$$
R_n\dt_n(2\frac{3}{4}- 1+o(1))>0
$$
and then we have,
$$
16\pi>\lm_\ii\geq \liminf\limits_{n\to +\ii} \int\limits_{B_{R_n\dt_n}(x_n)} \tilde V_{n} e^{\dis \tilde w_n}+
\liminf\limits_{n\to +\ii} \int\limits_{B_r(\tilde z)} \tilde V_{n,1} e^{\dis \tilde v_{n}}\geq16\pi +2\mu,
$$
which is a contradiction and proves \eqref{hypolsII-B}. Therefore, in view of \eqref{masstilde0} and  \eqref{hypolsII-B}, we can apply Lemma \ref{LS-quant} with
$$
d_n=\frac{1}{2} R_n\dt_n,
$$
 to deduce that,
 \begin{equation}\label{mass-IIb1}
 \lm_n=\widetilde \beta+o(1) \quad \mbox{and}\quad \lm_\ii= \widetilde \beta\in (8\pi,16\pi).
 \end{equation}
We claim that
\begin{equation}\label{profilewtilde:H11}
\widetilde w_n(y)=\widetilde U_n(y;\widetilde M_n) + O(1),\quad y\in D_n,
\end{equation}
where, for any large $R\geq 1$ we have,
\begin{equation}\label{profilevtilde:H11}
\widetilde U_n(y;\widetilde M_n)=\graf{\widetilde w(y)+O(1),\quad |y|\leq R, \\ -\frac{\widetilde M_n}{2\pi}\log(|y|) + O(1),\quad R\leq  |y|\leq r\delta_n^{-1},}
\end{equation}
\[
 \widetilde M_n=\int_{D_{n}}\widetilde V_n e^{\dis \widetilde w_n}=\lm_n+o(1),
\]
and $\widetilde w$ is the unique solution of \eqref{profile:tildew}.
The argument is a somehow simpler version of what
done in the Appendix of \cite{BCW1} about CASE (I) when $\lm_\ii<\tfrac{4\pi}{\sg}$ and much in the spirit of similar well known estimates obtained in
\cite{bclt} and \cite{tar-sd}, where one just replaces the radial profiles in
\cite{pt} with the profile \eqref{profilevtilde:H11}. Therefore, we omit the details of this proof here to avoid repetitions.\\
At this point we wish to show that \eqref{profilevtilde:H11} still holds true with  $\widetilde M_n$ replaced by $\lm_n$.
First of all, in terms of $v_n$, \eqref{profilewtilde:H11} takes the form,
\begin{equation}\label{profilevtilde:H1}
 v_n(x)=v_n(x_n)+ \widetilde U_n(\delta_n^{-1}x;\widetilde M_n)+O(1), \quad x\in B_{r}(0),
\end{equation}
and implies, in view of \eqref{mass-IIb1} and \eqref{profilevtilde:H11}, that
$$
v_n(x)=\left(1-\frac{\widetilde M_n}{4\pi(1+\bns)}\right)v_n(x_n)+O(1),\quad 2 |x|=r.
$$
On the other side, since putting $m_n=\inf\limits_{B_1} v_n$ and in view of \eqref{oscillation} we have that,
\begin{equation*}
v_n(x)-m_n=\lm_\ii G_{B_1}(x,0)+O(1),\quad \frac14\leq |x|\leq 1,
\end{equation*}
then we deduce that,
\beq\label{vn:cn-IIb1}
v_n(x)=O(1)+m_n,\quad 2 |x|=r,
\eeq
implying in turn by \eqref{masstilde0} that,
\beq\label{eps:II-B1}
m_n=O(1)-\left(\frac{\widetilde \beta+o(1)}{4\pi(1+\bns)}-1\right)v_n(x_n)\to -\infty.
\eeq
Thus, again because of \eqref{masstilde0}, \eqref{vn:cn-IIb1} and $\widetilde\beta>8\pi$, we see that
\[
0\leq \lm_n-\widetilde M_n= \int\limits_{B_1\setminus B_r}H_n e^{\dis v_n(x)}\leq
C_r\dt_n^{ \frac{\tilde \beta}{2\pi}  -2(1+\bns)}\leq C_r\dt_n^{ 2\mu},
\]
implying that, for any $1\leq R\leq |y| \leq r\dt_n^{-1}$, we have that
$$
|\lm_n-\widetilde M_n||\log(|y|)|\leq C\dt_n^{ 2\mu}\log(\dt_n^{-1})=o(1).
$$
As a consequence, it is readily seen that \eqref{profilevtilde:H11} still holds true with $\widetilde M_n$ replaced by $\lm_n$,
which in particular in terms of $v_n$ takes the form
\eqref{profilevtilde:H1-IIb}.
\finedim

\section{Appendix}

\subsection{Known results about \eqref{profile:tildew}}\label{sec:planar}

Here we list some facts about the problem \eqref{profile:tildew}. For more details we refer to the Appendix in \cite{BCW1}.
In particular, we will need the following well known facts  (\cite{cl2}, \cite{det}).
\begin{lemma}\label{massstrange}
 Let $\psi$ be a solution of the following problem
 \begin{equation*}
 \begin{cases}
 -\D \psi(x)= (\varepsilon_0^2+|x|^2)^{\alpha}e^{\dis \psi}\quad\text{in}\,\, \R^2, \\
 \underset{\R^2}\int(\varepsilon_0^2+|x|^2)^{\alpha}e^{\dis \psi}<+\infty,
 \end{cases}
\end{equation*}
for $\varepsilon_0\geq0$ and $\alpha>-1$. Let us denote by
\begin{equation*}
 \beta=\tfrac{1}{2\pi}\underset{\R^2}\int(\varepsilon_0^2+|x|^2)^{\alpha}e^{\dis \psi}.
\end{equation*}
Then,
\begin{itemize}
 \item $(\varepsilon_0^2+|x|^2)^{\alpha}e^{\dis \psi}$ is bounded in $\R^2$,
 \item There exists a constant $C$ such that
 \begin{equation*}\label{asymptotic}
  -\beta\ln(|x|+1)-C\leq \psi(x)\leq-\beta\ln(|x|+1)+C,
 \end{equation*}
 \item The following identity holds,
 \begin{equation*}\label{relation}
  2\alpha \underset{\R^2}\int|x|^2(\varepsilon_0^2+|x|^2)^{\alpha-1}e^{\dis \psi}=\pi\beta(\beta-4).
 \end{equation*}
 \item If $-1<\alpha< 0$, then
 \begin{equation*}
 \label{massnegative}
  8\pi(1+\alpha)\leq \underset{\R^2}\int(\varepsilon_0^2+|x|^2)^{\alpha}e^{\dis \psi}<8\pi,
 \end{equation*}
 where the l.h.s equality holds if and only if $\varepsilon_0=0$.\\
 If $\alpha\geq 0$, then
 \begin{equation*}
 \label{masspositive}
\max\{8\pi,4\pi(1+\alpha)\}\leq \underset{\R^2}\int(\varepsilon_0^2+|x|^2)^{\alpha}e^{\dis \psi}\leq  8\pi(1+\alpha),
 \end{equation*}
 where the l.h.s. equality holds if and only if $\alpha=0$ while the right hand side equality holds if and only if $\varepsilon_0=0$.
\end{itemize}
\end{lemma}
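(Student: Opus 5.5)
The plan is to follow the classical potential-theoretic approach of Chen–Li (\cite{cl2}); see also \cite{det}. Set $W(x)=(\varepsilon_0^2+|x|^2)^{\alpha}$, $f=We^{\psi}\in L^1(\R^2)$ and $m=\int_{\R^2}f=2\pi\beta$.

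\emph{Step 1: representation and a rough lower bound.} I define the logarithmic potential
$$
z(x)=\tfrac{1}{2\pi}\int_{\R^2}\log\tfrac{|y|+1}{|x-y|}\,f(y)\,dy .
$$
Then $h=\psi-z$ is harmonic. Jensen's inequality applied to $e^{\psi}$ on large disks, together with $\int f<\infty$, forces $h$ to grow sublinearly, so $h$ is a constant by Liouville's theorem. The potential satisfies $z(x)\ge -\beta\log(|x|+1)-C$ for $|x|$ large. Hence $\psi\ge-\beta\log(|x|+1)-C$, and finiteness of $\int We^{\psi}$ gives $\beta\ge 2(1+\alpha)$.

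\emph{Step 2: boundedness of $f$ and the strict inequality $\beta>2(1+\alpha)$.} First I would show $\psi\le C$ on $\R^2$, using Brezis–Merle estimates on unit balls: the mass of $f$ in far balls is small. Since $W$ is locally bounded, this gives $f\in L^\infty_{\rm loc}$. At infinity I would use the Kelvin transform $\hat\psi(x)=\psi(x/|x|^2)-\beta\log|x|$. It solves an equation whose weight behaves like $|x|^{2\beta-4-4\alpha}$ near $0$. The transformed weight is integrable, and a Brezis–Merle argument at the origin excludes the borderline case $\beta=2(1+\alpha)$. This yields $\beta>2(1+\alpha)$, and then $f\le C|x|^{2\alpha-2\beta}$ at infinity, which is $o(|x|^{-2})$. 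With this decay, the standard estimate $|z(x)+\beta\log|x||\le C$ follows, giving the two-sided asymptotics, and $f$ is bounded on all of $\R^2$. I expect this step to be the main obstacle: the strict inequality and the bounded error term are exactly what the Pohozaev argument in Step 3 needs.

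\emph{Step 3: the Pohozaev identity.} I apply the Pohozaev identity on $B_R$, as in \eqref{Pohozaevnwn}:
$$
\int_{\p B_R}\Big((x,\nu)\tfrac{|\nabla\psi|^2}{2}-(\nu,\nabla\psi)(x,\nabla\psi)\Big)
=\int_{\p B_R}(x,\nu)We^{\psi}-\int_{B_R}\big(2W+x\cdot\nabla W\big)e^{\psi}.
$$
By Step 2, $\nabla\psi=-\beta\frac{x}{|x|^2}+o(|x|^{-1})$ (differentiate the potential) and $R^2f=o(1)$ on $\p B_R$. Letting $R\to\infty$, the left side tends to $-\pi\beta^2$ and the boundary term on the right vanishes. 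Since $x\cdot\nabla W=2\alpha|x|^2(\varepsilon_0^2+|x|^2)^{\alpha-1}$, I obtain $-\pi\beta^2=-4\pi\beta-2\alpha\int|x|^2(\varepsilon_0^2+|x|^2)^{\alpha-1}e^{\psi}$, which is the stated identity.

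\emph{Step 4: mass bounds.} I use $0\le |x|^2(\varepsilon_0^2+|x|^2)^{\alpha-1}\le W$, where the upper inequality is an equality a.e.\ iff $\varepsilon_0=0$. If $\alpha\ge0$, the identity gives $0\le\pi\beta(\beta-4)\le 2\alpha\cdot 2\pi\beta$. Hence $4\le\beta\le 4(1+\alpha)$, i.e.\ $8\pi\le m\le 8\pi(1+\alpha)$, with equality on the right iff $\varepsilon_0=0$. Equality on the left forces $\alpha\int|x|^2(\cdots)e^{\psi}=0$, i.e.\ $\alpha=0$. The remaining lower bound $m\ge4\pi(1+\alpha)$ is just $\beta>2(1+\alpha)$ from Step 2. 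If $-1<\alpha<0$, the signs reverse: $\pi\beta(\beta-4)\in[4\pi\alpha\beta,0)$, so $4(1+\alpha)\le\beta<4$, with equality iff $\varepsilon_0=0$.
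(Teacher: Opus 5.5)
Your strategy is the standard argument behind the references the paper cites for this lemma (\cite{cl2}, \cite{det}); the paper itself gives no proof. The chain is: representation formula plus Liouville, Brezis--Merle, logarithmic asymptotics, Pohozaev, and the mass bounds read off from the identity. Steps 1, 3 and 4 are sound. Your Jensen argument does give $h\le C\log(2+|x|)$, because $W\ge c|x_0|^{2\alpha}$ on $B_{|x_0|/2}(x_0)$. When $\varepsilon_0=0$ you should run Pohozaev on $B_R\setminus B_r$; the inner boundary terms vanish as $r\to0$ since $\alpha>-1$. Some points do need correction.

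\textbf{The boundedness bullet and the equality cases.} The boundedness bullet is false when $\varepsilon_0=0$ and $-1<\alpha<0$. There $\psi$ is continuous at the origin, so $|x|^{2\alpha}e^{\psi}\to+\infty$ as $x\to0$; take for instance $\psi=\log\frac{8(1+\alpha)^2\lambda}{(1+\lambda|x|^{2(1+\alpha)})^2}$. Your sentence ``since $W$ is locally bounded, $f\in L^\infty_{\rm loc}$'' is exactly where this breaks. In that case you only get boundedness away from $0$; you get it globally when $\varepsilon_0>0$ or $\alpha\ge0$, which covers the paper's use with $\alpha=1$. Similarly, for $\alpha\ge0$ the right-hand equality is equivalent to $\varepsilon_0=0$ only when $\alpha>0$. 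At $\alpha=0$ the mass is $8\pi$ for every $\varepsilon_0$, and your inequality $2\alpha\int|x|^2(\varepsilon_0^2+|x|^2)^{\alpha-1}e^{\psi}\le 2\alpha\int We^{\psi}$ is then trivially an equality.

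\textbf{Step 2: the strict inequality.} Step 2 is misdiagnosed, and its exponents are off. The strict inequality $\beta>2(1+\alpha)$ already follows from Step 1. Your lower bound gives $f\ge c|x|^{2\alpha-\beta}$ for large $|x|$, which is not integrable if $\beta\le2(1+\alpha)$.

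\textbf{Step 2: the Kelvin transform.} The Kelvin weight is $|x|^{\beta-4}W(x/|x|^2)=|x|^{\beta-4-2\alpha}(1+\varepsilon_0^2|x|^2)^{\alpha}$, not $|x|^{2\beta-4-4\alpha}$. With your exponent the borderline case gives a bounded weight, so Brezis--Merle would exclude nothing. The resulting decay is $f\le C|x|^{2\alpha-\beta}$, not $C|x|^{2\alpha-2\beta}$.

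What the Kelvin step must really deliver is the upper bound $\psi\le-\beta\log|x|+C$. For that you need two facts you did not state.
\begin{itemize}
\item $\hat\psi$, which is bounded below near $0$ by Step 1, solves $-\Delta\hat\psi=\hat We^{\hat\psi}$ in $\mathcal{D}'(B_1)$ with no Dirac mass at the origin. This follows from the flux identity $\int_{\partial B_r}\partial_\nu\hat\psi=-\int_{|y|>1/r}f\to0$, together with $\int_{B_r}\hat We^{\hat\psi}=\int_{|y|>1/r}f$.
\item $\hat W\in L^p(B_1)$ for some $p>1$. This is exactly where the strict inequality is used.
\end{itemize}
Brezis--Merle on a small ball around $0$ then gives $\hat\psi\le C$. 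Combined with Step 1, this yields the two-sided asymptotics directly, and Steps 3--4 go through.

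Also, Brezis--Merle on unit balls does not give $\psi\le C$ when $\alpha<0$, since $\int_{B_1(x_0)}e^{\psi}$ is not controlled. The Kelvin step makes that detour unnecessary.
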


\bigskip
\bigskip

\subsection{The proof of Lemma \ref{LS-quant}}\label{quant:III}$\,$\\

In this section we prove Lemma \ref{LS-quant}.
\begin{proof}
We will use $C>0$ to denote several constants whose value may change possibly line to line all along the proof.\\
We argue as in \cite{ls}. In view of \eqref{Kzero}, there exists $r_0>0$ such that $\min\limits_{\overline{B_{r_0}}} K>0$.
Let us fix any $r\in [\tfrac{r_0}{4},\tfrac{r_0}{2}]$ and set $s\in[4 d_n,r]$,
$$
\hat\O:=B_2\backslash B_{\frac{1}{2}},
$$
and
\[
 \hat v_n(x)=v_n(sx)+2(1+\bns)\log s,\quad x\in B_2.
\]
Then,
\[
 -\D \hat v_n(x)=\left({\frac{\epsilon_n^2}{s^2}+|x|^2}\right)^{\bns}K_n(sx)e^{\dis \hat v_{n}(x)}=:f_n(x) \quad\text{in}\quad B_2,
\]
and by \eqref{hypols-lem} and the fact that
$$
\tfrac{\epsilon_n}{s}\leq\tfrac{\epsilon_n}{4d_n}\leq\tfrac{C}{4},
$$
we have $\|f_n\|_{L^{\infty}(\hat \O)}\leq C$. Let $\kappa_n$ be the solution of
\begin{align*}
\begin{cases}
-\D \kappa_n=f_n & \hat\O, \\
\kappa_n=0 &\p\hat\O,
\end{cases}
\end{align*}
by standard elliptic estimates we have $\|\kappa_n\|_{L^\infty(\hat \O)}\leq C$. Then, once again by \eqref{hypols-lem},  the harmonic function $g_n=\kappa_n-\hat v_n$ is bounded from below by $-C$ and, by the Harnack inequality, there exists a universal constant $\tau_0\in(0,1)$ such that
\[
 \tau_0\sup_{\p B_1} (g_n+C)\leq \inf_{\p B_1}(g_n+C).
\]
Moving back to $\hat v_n$ we see that,
\[
 \sup_{\p B_1} \hat v_n\leq\tau_0 \inf_{\p B_1}\hat v_n+\hat C,
\]
which implies that,
\begin{equation}
 \label{HarnackI-lem}
 \sup_{\p B_s} v_n\leq\tau_0 \inf_{\p B_s}v_n-2(1-\tau_0)(1+\bns)\log  s+\hat C,
\end{equation}
for every $s\in[4d_n,r]$.
On the other hand, $$
\frac{\eps_n}{\delta_n}\leq C,\qquad \frac{|x_n|}{\dt_n}\leq C
$$
imply that we are in CASE (III). In particular $\bis=1$ and $\hat v_n$ satisfies the hypothesis of Lemma \ref{lemmasup+Cinfspecialcase}
with $\hat\epsilon_n=\tfrac{\epsilon_n}{s}$, $\hat\delta_n=\tfrac{\delta_n}{s}$ and $\hat x_n=\tfrac{x_n}{s}$.
Moreover, by this choice, we observe that $\tfrac{\hat\epsilon_n}{\hat\delta_n}=\tfrac{\epsilon_n}{\delta_n}$ and
$\tfrac{\hat x_n}{\hat\delta_n}=\tfrac{x_n}{\delta_n}$ and $\hat v_n(\hat\delta_n x)+2(1+\bns)\log\hat\delta_n=\widetilde v_n(x)$,
therefore $\widetilde \beta$ in \eqref{profile:tildew} is just the same as the
$\widetilde \beta$ appearing in Lemma \ref{lemmasup+Cinfspecialcase}. Hence, there exist two positive constants $C_0$ and $C_1$, (we notice $C_0$ depending only by $\widetilde\beta$), such that
\[
 \sup_{B_{\frac{1}{2}}} \hat v_n+C_0\inf_{B_1}\hat v_n\leq C_1,
\]
which immediately implies that,
\[
 \sup_{B_{\frac{s}{2}}} v_n+C_0\inf_{B_s} v_n\leq C_1-2(1+C_0)(1+\bns)\log s,
\]
for every $s\in[4d_n,r]$.  By noticing that  $ \sup\limits_{B_{\frac{s}{2}}} v_n= v_n(x_n)$, we can rewrite the above inequality as follows,
\begin{equation}
 \label{sup+infI-lem}
 \inf_{B_s} v_n\leq \tfrac{C_1}{C_0}-2(1+\tfrac{1}{C_0})(1+\bns)\log s-\tfrac{1}{C_0}v_n(x_n).
\end{equation}
Combining \eqref{HarnackI-lem} and \eqref{sup+infI-lem} and using the superharmonicity of $v_n$, we have that
\[
 \sup_{\p B_s}v_n\leq \tau_0\tfrac{C_1}{C_0}+\hat C-\tfrac{\tau_0}{C_0}v_n(x_n)-2(1+\tfrac{\tau_0}{C_0})(1+\bns)\log s,
\]
for every $s\in[4d_n,r]$, or equivalently,
\[
 e^{v_n(x)}\leq C_2\delta_n^{2(1+\bns)\tfrac{\tau_0}{C_0}}|x|^{-2(1+\bns)(1+\tfrac{\tau_0}{C_0})},
\]
for every $|x|\in[4d_n,r]$. In particular, this implies that
\begin{align*}
 &\int\limits_{B_{\frac{r_0}{2}}\setminus B_{4d_n}}H_n e^{\dis v_{n}}=\hspace{-.3cm}\int\limits_{B_{\frac{r_0}{2}}\setminus B_{4d_n}}(\eps_n^2+|x|^2)^{\bns} K_n e^{\dis v_{n}}\leq\\
 & C\hspace{-.3cm}\int\limits_{B_{\frac{r_0}{2}}\setminus B_{4d_n}}(\eps_n^2+|x|^2)^{\bns} \left(\frac{\delta_n}{|x|}\right)^{2(1+\bns)\tfrac{\tau_0}{C_0}} \left(\frac{1}{|x|}\right)^{2(1+\bns)}\leq\\
 & C\left(\frac{\delta_n}{d_n}\right)^{2(1+\bns)\tfrac{\tau_0}{C_0}}\underset{4\leq|z|\leq \tfrac{1}{d_n}}\int \left(\frac{\eps_n^2}{d_n^2}+|z|^2\right)^{\bns} \left(\frac{1}{|z|}\right)^{2(1+\bns)(1+\tfrac{\tau_0}{C_0})}\leq
 \\
 &C\left(\frac{\delta_n}{d_n}\right)^{2(1+\bns)\tfrac{\tau_0}{C_0}}\underset{4\leq|z|\leq \tfrac{1}{d_n}}\int \left(\frac{1}{|z|}\right)^{2}\left(\frac{1}{|z|}\right)^{2(1+\bns)\tfrac{\tau_0}{C_0}}\leq C\left(\frac{\delta_n}{d_n}\right)^{2(1+\bns)\tfrac{\tau_0}{C_0}}\to 0,
\end{align*}
where we used $\frac{\delta_n}{d_n}\to 0$.

At this point, by using also \eqref{-infinity}, we deduce that,
\begin{align*}
\lambda_n=\int\limits_{B_{4d_n}}H_n e^{\dis v_{n}}+\int\limits_{B_{\frac{r_0}{2}}\setminus B_{4d_n}}H_n e^{\dis v_{n}}+o(1)=\beta+o(1),
\end{align*}
as claimed.
\end{proof}

\bigskip

\subsection{The Proof of Lemma \ref{lem:barwn}}\label{sec:barwnstar}

By well known arguments (see for example \cite{BCW0}), we have that, for some $|z_0|=2$,
\begin{equation*}
 \bar w_n(x)=\tfrac{1}{2\pi}\int_{B_{\frac{r}{2|x_n|}}}\log(\tfrac{|z-z_0|}{|x-z|})\bar V_n e^{\bar w_n}\,dz-2(1+\bns)\log\bar \theta_n+O(1),
\end{equation*}
for any $4\leq |x| \leq \frac{r}{4|x_n|}$. Thus, we also have, setting $\xi_n=\frac{z_0}{d_n}$,

\begin{align*}
 w_n^*(y)&=\bar w_n(d_ny)+2(1+\bns)\log(d_n)\\
&=\tfrac{1}{2\pi}
\int\limits_{|z|\leq \frac{r}{2|x_n|}}\log(\tfrac{|z-z_0|}{|d_ny-z|})\bar V_n e^{\bar w_n}\,dz-2(1+\bns)\log\left(\frac{\bar \theta_n}{d_n}\right)+O(1)\\
&=\tfrac{1}{2\pi}
\int\limits_{d_n|\xi|\leq \frac{r}{2|x_n|}}\log(\tfrac{|d_n\xi-d_n\xi_n|}{|d_ny-d_n\xi|}) V^*_n e^{w_n^*}\,d\xi-2(1+\bns)\log\left(\frac{\bar \theta_n}{d_n}\right)+O(1)\\
&=\tfrac{1}{2\pi}
\int\limits_{|\xi|\leq \frac{r}{2d_n|x_n|}}\log(\tfrac{|\xi-\xi_n|}{|y-\xi|})V^*_n e^{w_n^*}\,d\xi-2(1+\bns)\log\left(\frac{\bar \theta_n}{d_n}\right)+O(1),
\end{align*}
for any $|y|\geq \frac{4}{d_n}$. Next, remark that for every $|y|\geq {2}$ and $|\xi|\leq 1$, we have that
\[
\tfrac{1}{|y-\xi|}\leq\tfrac{2}{|y|}\leq 1.
\]

Then, for every $|y|\geq {2}$,
\begin{align*}
\tfrac{1}{2\pi}\int_{B_{1}}\log(\tfrac{|\xi-\xi_n|}{|y-\xi|})V^*_n e^{w_n^*}\,d\xi&
\leq \omega_n+\tfrac{1}{2\pi}\log(\tfrac{2}{|y|})\int_{B_{1}}V^*_n e^{w_n^*}\,d\xi\\
&= (4+\tfrac{7}{2}\tau)\log(\tfrac{1}{|y|})+\omega_n + O(1),
\end{align*}
where
$$
\omega_n =\tfrac{1}{2\pi}\int_{B_{1}}\log({|\xi-\xi_n|})V^*_n e^{w_n^*}\,d\xi.
$$

In particular, for every $|y|\geq {2}$ and $1\leq |\xi|<\frac{|y|}{4}$, we have that
\[
\tfrac{|\xi-\xi_n|}{|y-\xi|}\leq\tfrac{2|y|}{3|y|}\leq 1,
\]
whence
\begin{align*}
\tfrac{1}{2\pi}\int_{B_{\frac{|y|}{4}}}\log(\tfrac{|\xi-\xi_n|}{|y-\xi|})V^*_n e^{w_n^*}\,d\xi
&\leq (4+\tfrac{7}{2}\tau)\log(\tfrac{1}{|y|})+\omega_n+O(1)+
\underbrace{\tfrac{1}{2\pi}\int_{{1\leq|\xi|\leq\frac{|y|}{4}}}\log(\tfrac{|\xi-\xi_n|}{|y-\xi|})
V^*_n e^{w_n^*}\,d\xi}_{\leq0}\\
&\leq  (4+\tfrac{7}{2}\tau)\log(\tfrac{1}{|y|})+\omega_n+O(1).
\end{align*}

Now, for $|y|\geq {2}$, we have that
\begin{align*}
 &\int_{B_{\frac{|y|}{4}}(y)}\log(\tfrac{|\xi-\xi_n|}{|y-\xi|})V^*_n e^{w_n^*}\,d\xi=\\
&= \underbrace{\int\limits_{B_{\frac{|y|}{4}}(y)}\log(|\xi-\xi_n|)V^*_n e^{w_n^*}\,d\xi}_{(A)}+
\underbrace{\int\limits_{|\xi-y|\leq\frac{1}{|y|^{1+\alpha_\infty}}}\log(\tfrac{1}{|y-\xi|})V^*_n e^{w_n^*}\,d\xi}_{(B)}+
\underbrace{\int\limits_{\frac{1}{|y|^{1+\alpha_\infty}}\leq|\xi-y|\leq\frac{|y|}{4}}\log(\tfrac{1}{|y-\xi|})V^*_n e^{w_n^*}\,d\xi}_{(C)},
\end{align*}
where $\alpha_\infty=\al_{\ii,\sg}=1$.
Then,
\begin{align*}
 (A)+(C)&\leq (2+\alpha_\infty)\log(|y|)\int_{|\xi-y|\leq\frac{|y|}{4}}V^*_n e^{w_n^*}\,d\xi+O(1) \\
 &\leq 3\log(|y|)\int_{|\xi|\geq\frac{3}{2}}V^*_n e^{w_n^*}\,d\xi+O(1)\leq 4\pi \tau \log(|y|)+O(1),
\end{align*}
where we have used the fact that,
\[
 \int_{|\xi|\geq\frac{3}{2}}V^*_n e^{w_n^*}\,d\xi\leq \int_{|\xi|\geq 1}V^*_n e^{w_n^*}\,d\xi= \pi \tau + o(1).
\]

On the other hand, since $w_n^*\leq C$, we have that, putting $\alpha_n=\al_{n,\sg}$,
\begin{align*}
 (B)&\leq C_1 \int\limits_{\{|\xi-y|\leq\frac{1}{|y|^{1+\alpha_\infty}}\}}\log(\tfrac{1}{|\xi-y|})(1+|\xi|^2)^{\alpha_n}\\
 &\leq C_2 \int\limits_{\{|\xi-y|\leq\frac{1}{|y|^{1+\alpha_\infty}}\}}\log(\tfrac{1}{|\xi-y|})|\xi|^{2\alpha_n}\\
 &\leq C_3 (\tfrac{1}{|y|^{1+\alpha_\infty}}+|y|)^{2\alpha_n} \int\limits_{\{|\xi-y|\leq\frac{1}{|y|^{1+\alpha_\infty}}\}}\log(\tfrac{1}{|\xi-y|})
 \end{align*}
 \begin{align*}
 &\leq C_4 (\tfrac{1}{|y|^{1+\alpha_\infty}}+|y|)^{2\alpha_n} \tfrac{\log|y|}{|y|^{2(1+\alpha_\infty)}}=O(1),
\end{align*}
for $|y|\geq 1$, which implies that
\begin{equation*}
 \int_{B_{\frac{|y|}{4}}(y)}\log(\tfrac{|\xi-\xi_n|}{|y-\xi|})V^*_n e^{w_n^*}\,d\xi\leq  4\pi \tau \log|y|+O(1).
\end{equation*}

At last, if $\xi\in (B_{\frac{|y|}{4}}\cup B_{\frac{|y|}{4}}(y))^c$, we have that
\begin{equation*}
 \log(\tfrac{|\xi-\xi_n|}{|y-\xi|})\leq C.
\end{equation*}
Then, for every $|y|\geq {2}$, we see that
\begin{align*}\nonumber
  w_n^*(y)&\leq \omega_n-2(1+\bns)\log\left(\frac{\bar \theta_n}{d_n}\right)+(4+\tfrac{7}{2}\tau-2\tau)\log\frac{1}{|y|}+ O(1)\leq \\
 & \omega_n-2(1+\bns)\log\left(\frac{\bar \theta_n}{d_n}\right)+(4+\tau)\log\frac{1}{|y|}+ O(1)
\end{align*}
and consequently,
\begin{align*}
 V^*_n e^{w_n^*}\leq C\left(\frac{d_n}{\bar \theta_n}\right)^{2(1+\bns)}\frac{e^{\omega_n}}{|y|^{4+\tau}},
\end{align*}
for every $|y| \geq {2}$ which proves the estimate from above in the statement of Lemma \ref{lem:barwn}.\\
At this point, we conclude the proof with the following estimate from below for $w_n^*$.\\
{\em
{\bf Claim:}
 \begin{align*}
 w_n^*(y)&\geq -\tfrac{M_n^*}{2\pi}\log(|y|)-2(1+\bns)\log\left(\frac{\bar \theta_n}{d_n}\right)+\omega_n + O(1),\quad |y|\geq 2,
\end{align*}
where
\[
M_n^*=\int_{B_{\frac{r}{2|x_n|d_n}}} V^*_n e^{w_n^*}\,d\xi
\]
and there exists $C>0$ such that,
$$
e^{\omega_n}\left(\frac{d_n}{\bar \theta_n}\right)^{2(1+\bns)}\leq C.
$$}
\noi
\!\!\!{\em Proof of Claim}.\\
Let us write,
\begin{align*}
w_n^*(y)=-\tfrac{M_n^*}{2\pi}\log(|y|)-2(1+\bns)\log\left(\frac{\bar \theta_n}{d_n}\right)+O(1)+
\tfrac{1}{2\pi}\int_{B_{\frac{r}{2|x_n|d_n}}}\log\left(\tfrac{|y||\xi-\xi_n|}{|y-\xi|}\right) V^*_n e^{w_n^*}\,d\xi
\end{align*}
and set $\om_1=B_{\frac{|y|}{2}}(0)$, $\om_2=B_{\frac{|y|}{2}}(y)$, $\om_3=(\om_1\cup \om_2)^c$.
Clearly for $\xi\in \om_1$ we have
$$
\frac23\leq \frac{|y|}{|y-\xi|}\leq 2,
$$
while for $\xi\in\om_2$
$$
\frac{|y|}{|y-\xi|}\geq 2,
$$
and for $\xi\in\om_3$,
$$
\frac{|y|}{3|\xi|}\leq \frac{|y|}{|y-\xi|}\leq 2.
$$
Therefore, with obvious meaning of notations, for some $|y|\geq 2$, we have
\begin{align*}
I(y)&=\tfrac{1}{2\pi}\int_{|\xi|\leq \frac{r}{2|x_n|d_n}}\log\left(\tfrac{|y|}{|y-\xi|}\right) V^*_n e^{w_n^*}\,d\xi=I_1(y)+I_2(y)+I_3(y)\\
&=O(1)+\tfrac{1}{2\pi}\int_{\om_2}\log\left(\tfrac{|y|}{2|y-\xi|}\right) V^*_n e^{w_n^*}\,d\xi+I_3(y)\\
&\geq O(1)+\tfrac{1}{2\pi}\int_{\om_3}\log\left(\tfrac{|y|}{3|\xi|}\right) V^*_n e^{w_n^*}\,d\xi\\
&\geq O(1)+\tfrac{1}{2\pi}\int_{|\xi|\geq \frac{|y|}{2}}\log\left(\tfrac{|y|}{|\xi|}\right) V^*_n e^{w_n^*}\,d\xi,
\end{align*}
while obviously we have,
$$
\tfrac{1}{2\pi}\int_{B_{\frac{r}{2|x_n|d_n}}}\log(|\xi-\xi_n|)V^*_n e^{w_n^*}\,d\xi=
\omega_n+\tfrac{1}{2\pi}\int_{|\xi|\geq 1}\log(|\xi|)V^*_n e^{w_n^*}\,d\xi.
$$

Next we deduce that,
\begin{align*}
&\tfrac{1}{2\pi}\int_{|\xi|\geq \frac{|y|}{2}}\log\left(\tfrac{|y|}{|\xi|}\right) V^*_n e^{w_n^*}\,d\xi+
\tfrac{1}{2\pi}\int_{|\xi|\geq 1}\log(|\xi|)V^*_n e^{w_n^*}\,d\xi=\\
&\tfrac{1}{2\pi}\log(|y|)\int_{|\xi|\geq \frac{|y|}{2}}V^*_n e^{w_n^*}\,d\xi+\tfrac{1}{2\pi}\int_{1\leq |\xi|\leq \frac{|y|}{2}}\log(|\xi|)V^*_n e^{w_n^*}\,d\xi\geq 0.
\end{align*}

As a consequence we have
$$
w_n^*(y)\geq -\tfrac{M_n^*}{2\pi}\log(|y|)-2(1+\bns)\log\left(\frac{\bar \theta_n}{d_n}\right)+\omega_n + O(1), \quad |y|\geq 2,
$$
and consequently,
$$
\pi \tau +o(1)=\int_{2\leq |\xi|\leq \frac{r}{2|x_n|d_n}}V^*_n e^{w_n^*}\,d\xi+o(1)\geq
C e^{\omega_n}\left(\frac{d_n}{\bar \theta_n}\right)^{2(1+\bns)}
\int_{2\leq |\xi|\leq \frac{r}{2|x_n|d_n}}\frac{|\xi|^{2\al_n}}{|\xi|^{4+\tfrac72 \tau}}\,d\xi=
$$
$$
C e^{\omega_n}\left(\frac{d_n}{\bar \theta_n}\right)^{2(1+\bns)}
\int_{2}^{\frac{r}{2|x_n|d_n}}\frac{1}{|\xi|^{4+\tfrac72 \tau-2\al_n}}\,d\xi=C e^{\omega_n}\left(\frac{d_n}{\bar \theta_n}\right)^{2(1+\bns)}.
$$
This fact concludes the proof of the Claim.\finedim

\bigskip

\section*{Acknowledgements}
\noindent
Daniele Bartolucci and Paolo Cosentino were partially supported by the MIUR Excellence Department Project MatMod@TOV awarded to the Department of Mathematics, University of Rome ``Tor Vergata'', CUP E83C23000330006, 
by PRIN project 2022 2022AKNSE4, ERC PE1\_11, ``{\em Variational and Analytical aspects of Geometric PDEs}'', by the E.P.G.P. Project sponsored by the University of Rome ``Tor Vergata'', E83C25000550005, and by INdAM-GNAMPA Project, CUP E53C25002010001. \\
Lina Wu was partially supported by the National Natural Science Foundation of China (12201030).

\section*{Data availability statement}
Data sharing not applicable to this article as no datasets were generated or analysed during the current study.

\section*{Conflict-of-Interest Statement}
The authors declare that they have no conflict of interest.

\end{document}